\newtheorem{lemma}{Lemma}[section]
\newtheorem{theorem}[lemma]{Theorem}
\newtheorem{proposition}[lemma]{Proposition}
\newtheorem{corollary}[lemma]{Corollary}
\newcommand{\N}{\ifmmode{{\Bbb N}}\else{\mbox{${\Bbb N}$}}\fi}
\newcommand{\R}{\ifmmode{{\Bbb R}}\else{\mbox{${\Bbb R}$}}\fi}
\newcommand{\halfscript}[2]{%
  \mathord{\hbox{
    \valign{%
      \vfil##\vfil\cr
      \hbox{$#1$}\cr
      \hbox{$\scriptstyle#2$}\cr
    }%
    \kern\scriptspace
  }}%
}
\begin{document}
	
\title[Numerical stabilization for a mixture system with kind damping]{Numerical stabilization for a mixture system with kind damping} 

\author{K. Ammari}
\address{LR Analysis and Control of PDEs, LR 22ES03, Department of Mathematics, Faculty of Sciences of Monastir, University of Monastir, 5019 Monastir, Tunisia}
\email{kais.ammari@fsm.rnu.tn}

\author{V. Komornik}
\thanks{The author was supported by the following grants: NSFC No. 11871348, CAPES: No. 88881.520205/2020-01,  
MATH AMSUD: 21-MATH-03.}
\address{D\'epartement de math\'ematique, Universit\'e de Strasbourg, 7 rue René Descartes, 67084 Strasbourg Cedex, France}
\email{komornik@math.unistra.fr}

\author{M. Sep\'{u}lveda-Cortés}
\thanks{The author was supported by the following grants: Fondecyt-ANID project 1220869,
and ANID-Chile through Centro de Modelamiento  Matem\'atico  (FB210005)}
\address{DIM and CI$^2$MA, Universidad de Concepci\'on, Concepci\'on, Chile}
\email{maursepu@udec.cl}

\author{O. Vera-Villagrán}
\thanks{The author is partially financed by UTA MAYOR 2022-2023, 4764-22.}
\address{Departamento de Matem\'{a}tica, Universidad
de Tarapac\'{a}, Av. 18 de Septiembre 2222, Arica, 
Chile}
\email{opverav@academicos.uta.cl}

\date{}

\begin{abstract}
In this paper, we conduct a numerical analysis of the strong stabilization and polynomial decay of solutions for the initial boundary value problem associated with a system that models the dynamics of a mixture of two rigid solids with porosity. This mathematical model accounts for the complex interactions between the rigid components and their porous structure, providing valuable information on the mechanical behavior of such systems.
Our primary objective is to establish conditions under which stabilization is ensured and to rigorously quantify the rate of decay of the solutions. Using numerical simulations, we assess the effectiveness of different stabilization mechanisms and analyze the influence of key system parameters on the overall dynamics.  
\end{abstract}

\keywords{Fractional derivative, Mixture, C$_{0}$-Semigroup, Polynomial stability}
\subjclass{35Q40, 93D15, 47D03, 74D05}

\maketitle
\tableofcontents

\section{Introduction}
The theory of mixtures of solids has been extensively explored by various researchers over the decades. Foundational contributions include the works of Truesdell and Toupin (\cite{trues1}, 1960), Green and Naghdi (\cite{green1}, 1965; \cite{green2}, 1968), and Bowen and Wiese (\cite{bo1}, 1969; \cite{bo2}, 1976). These studies have laid the foundation for understanding complex interactions in solid mixtures.

\medskip 

Detailed presentations of these theories can be found in key articles such as those by Atkin and Craine (\cite{atkin}, 1976), Bedford and Drumheller (\cite{bel}, 1983) and Rajagopal and Tao (\cite{raja}, 1995). Bowen's book (\cite{bo2}, 1976) offers a comprehensive account of the theories related to mixtures of elastic solids. Meanwhile, Green and Steel (\cite{green3}, 1966) and Steel (\cite{steel}, 1967) formulated theories using spatial descriptions in which displacement gradients and relative velocity served as independent constitutive variables.

\medskip

The first theory utilizing a Lagrangian framework was introduced by Bedford and Stern (\cite{bed}, 1972). In their approach, the independent constitutive variables are the displacement gradients and the relative displacement. Over the years, significant interest has been directed toward examining the qualitative properties of this theory, contributing to its further refinement and application.

\medskip

In the following, we focus on a specific case within the linear theory of porous viscoelastic mixtures. Our analysis is restricted to the interaction between the porosity field,
$u= u({\pmb{\color{black}{x}}},\,t)$ and the temperature field, $v= v({\pmb{\color{black}{x}}},\,t)$, within a homogeneous and isotropic mixture. For this scenario, the governing equations for $u$ and $v$, assuming the absence of body loads, are represented by the following system of equations. We will now consider the following system:

\begin{eqnarray}
\left\lbrace
\label{101}
\begin{array}{l}
\rho_{1}u_{tt} - a_{11}\Delta u - a_{12}\Delta v - b_{11}\Delta u_{t} - b_{12}\Delta v_{t} + \alpha(u - v) = 0,  \\
\rho_{2}v_{tt} - a_{12}\Delta u - a_{22}\Delta v - b_{12}\Delta u_{t} - b_{22}\Delta v_{t} - \alpha(u - v) = 0 ,
\end{array}
\right. 
\end{eqnarray}
where we use bold letter for vector fields. The subscript $t$ represent the time derivative. We consider an isotropic elastic body $\Omega\subset \mathbb{R}^{3}$ of density $\rho_{1} > 0$ and $\rho_{2} > 0.$ Moreover, the matrix $A = (a_{ij})$ and $B = (b_{ij})$ are symmetric, positive definite and 
\begin{align}
\label{102}a_{11}>0, \qquad a_{11}a_{22} - a_{12}^{2} >0,
\end{align}
in orthogonal curvilinear 
coordinates, at point ${\pmb{\color{black}{x}}} = (x_{1},\,x_{2},\,x_{3}) \in \Omega$ and time $t > 0.$ We assume the domain $\Omega$ is a simply connected smooth subset of $\mathbb{R}^{3}$ with smooth boundary $\partial\Omega.$ The set of equations \eqref{101} is completed with homogeneous Dirichlet conditions:
\begin{align}
\label{103}u({\pmb{\color{black}{x}}},\,t) = v({\pmb{\color{black}{x}}},\,t)  = 0,\quad \forall\,({\pmb{\color{black}{x}}},\,t)\in \partial\Omega\times (0,\,+\infty).
\end{align}
These conditions correspond to a rigidly clamped structure. Initial condition are:
\begin{align}
\label{104}(u,\,v)({\pmb{\color{black}{x}}},\,0) = (u_{0}({\pmb{\color{black}{x}}}),\,v_{0}({\pmb{\color{black}{x}}})),\quad (u_{t},\,v_{t})({\pmb{\color{black}{x}}},\,0) = (u_{1}({\pmb{\color{black}{x}}}),\,v_{1}({\pmb{\color{black}{x}}})).
\end{align}
For the sake of simplicity, we consider a homogeneous material, so that all parameters $\rho,\,j$ are constants equal to $1.$ Following the idea given by \cite{kais} we study the asymptotic behavior for the system
\begin{eqnarray}
\left\lbrace
\label{105}
\begin{array}{l}
\rho_{1}u_{tt} - a_{11}\Delta u - a_{12}\Delta v + \alpha(u - v) + \partial_{t}^{\alpha,\,\eta}u = 0, \quad ({\pmb{\color{black}{x}}},\,t)\in \Omega\times (0,\,\infty),  \\
\rho_{2}v_{tt} - a_{12}\Delta u - a_{22}\Delta v - \alpha(u - v)  + \partial_{t}^{\alpha,\,\eta}v  = 0, \quad ({\pmb{\color{black}{x}}},\,t)\in \Omega\times (0,\,\infty),\\
u({\pmb{\color{black}{x}}},\,0) = u_{0}({\pmb{\color{black}{x}}}),\quad u_{t}({\pmb{\color{black}{x}}},\,0) = u_{1}({\pmb{\color{black}{x}}}),\quad  {\pmb{\color{black}{x}}}\in \Omega, \\
v({\pmb{\color{black}{x}}},\,0) = v_{0}({\pmb{\color{black}{x}}}),\quad v_{t}({\pmb{\color{black}{x}}},\,0) = v_{1}({\pmb{\color{black}{x}}}),\quad  {\pmb{\color{black}{x}}}\in \Omega,  \\
u({\pmb{\color{black}{x}}},\,t) = v({\pmb{\color{black}{x}}},\,t) = 0,\quad \forall\,({\pmb{\color{black}{x}}},\,t)\in\partial\Omega\times (0,\,+\infty),
\end{array}
\right. 
\end{eqnarray}
where the dissipative terms $- b_{11}\Delta u_{t} - b_{12}\Delta v_{t}$ and $- b_{12}\Delta u_{t} - b_{22}\Delta v_{t}$ are replaced by the following fractional derivatives kind damping $\partial_{t}^{\alpha,\,\eta}u$ and $\partial_{t}^{\alpha,\,\eta}v$ respectively. 
This new approximation can have many applications.
Researchers have developed models using fractional derivatives to describe the viscoelastic behavior of bituminous mixtures in pavement systems. These models enable the simulation of phenomena such as creep and recovery, providing deeper insight into the mechanical properties of both the mineral skeleton and the bituminous binder (see for example \cite{Lagos}).

\medskip

Our purpose in this work is to investigate the stability of the solutions of system \eqref{105}.

\medskip

The rest of the paper is divided into five sections. In Section \ref{sec2}, we show that system \eqref{105} can be replaced by an augmented system \eqref{209} obtained by coupling an equation with a suitable diffusion, and we study the functional energy associated to the system. In Section \ref{sec3}, we establish the existence and uniqueness of solutions of the system \eqref{209}; for this, we use \cite{kais, kaisI}. In Section \ref{sec4}, we prove the strong stability and the polynomial stability of the system \eqref{209}. In Sections \ref{sec5}, we study the polynomial stability. 

\medskip

Throughout this paper, $C$ is a generic constant, not necessarily the same on each occasion (it may change from line to line) and depending on the indicated quantities.

\section{Augmented Model} \label{sec2}
\label{augmodel}
Initially, we provide a brief review of fractional calculus. For
the fractional integral, there are several slightly different
definitions for the fractional derivative operator. We understood
the concept in the Caputo sense (see \cite{C1, C2, C3, KST}).
\\
Let $0 < \alpha < 1.$  The Caputo fractional integral of order
$\alpha$ is defined by
\begin{align}\nonumber
I^{\alpha}f(t) =
\frac{1}{\Gamma(\alpha)}\int_{0}^{t}(t - s)^{\alpha - 1}
f(s)ds,
\end{align}
where $\Gamma$ is the well-known gamma function, $f \in
L^{1}(0,\,+\infty).$ 
The Caputo fractional derivative operator of order $\alpha$ is
defined by
\begin{equation}
\nonumber 
D^{\alpha}f(t) = I^{1 - \alpha}Df(t):=
\frac{1}{\Gamma(1 - \alpha)}\int_{0}^{t}(t - s)^{-\alpha}
f'(s)ds,
\end{equation}
with $f \in W^{1,\,1}(0,\,+\infty).$ 
We note that Caputo definition of fractional derivative does possess a
very simple interpretation, that means, if the function $f(t)$
represents the strain history within a viscoelastic material whose
relaxation function is $[\Gamma(1 - \alpha)t^{\alpha}]^{-1}$ then
the material will experience at  any time $t$ a total stress given
the expression $D^{\alpha}f(t).$
Moreover, it easy to show that  $D^{\alpha}$ is a left inverse of
$I^{\alpha},$  but in general it is not a right inverse. Indeed, we have
\begin{eqnarray*}
D^{\alpha}I^{\alpha}f = f, \qquad I^{\alpha}D^{\alpha}f(t) =  f(t)
- f(0).
\end{eqnarray*}
For more properties of fractional
calculus  see \cite{SKM}. 

\medskip

In this paper, we consider different versions that
\eqref{201} and \eqref{202}.  In fact, Choi and
MacCamy \cite{CM} establish the following definition of fractional
integro-differential operators with exponential weight. Let  $0 <
\alpha < 1\,$ and $\eta \ge 0.$ The exponential fractional integral of
order $\alpha$ is defined by
\begin{equation}
\label{201} 
I^{\alpha,\,\eta}f(t) =
\frac{1}{\Gamma(\alpha)}\int_{0}^{t}e^{-\eta(t - s)}(t -
s)^{\alpha - 1}f(s)ds,\quad {\rm with}\  f \in L^{1}([0,\,+\infty)).
\end{equation}
The exponential fractional derivative operator of order $\alpha$ is
defined by
\begin{equation}
\label{202}
\partial_{t}^{\alpha,\,\eta}f(t) =  \frac{1}{\Gamma(1 - \alpha)}
\int_{0}^{t}e^{-\eta(t - s)}(t -
s)^{-\alpha}f'(s)ds,\quad {\rm with}\  f \in W^{1,\,1}([0,\,+\infty)).
\end{equation}
Note that $\partial_{t}^{\alpha,\,\eta}f(t) = I^{1 - \alpha,\,\eta} f'(t).$
\noindent
The following results are going to be used from now:
\begin{theorem}
\cite{15}
\label{theorem51} Let $\mu$ be the function
\begin{eqnarray}
\label{203}\mu(\xi) = |\xi|^{(2\alpha - 1)/2},\quad
\xi\in\mathbb{R},\quad 0 < \alpha < 1.
\end{eqnarray}
Then the relation between the {\it Input} $U$ and the {\it Output}
${\it O}$ is given by the following system
\begin{align}
\label{204}& \varphi_{t}({\pmb{\color{black}{x}}},\,t,\,\xi) + \xi^{2} \, \varphi({\pmb{\color{black}{x}}},\,t,\,\xi) =
\mu(\xi)U(t),\quad \xi\in\mathbb{R},\quad t > 0, \\
\label{205}& \varphi(0,\,\xi) = 0, \\
\label{206}& {\mathcal O} =
\pi^{-1}\sin(\alpha\pi)\int_{\mathbb{R}}\mu(\xi)\varphi({\pmb{\color{black}{x}}},\,t,\,\xi)
d\xi,
\end{align}
which implies that
\begin{eqnarray}
\label{207}{\mathcal O} = I^{1 - \alpha}U,\quad {\rm where}\ U\in C([0,\,+\infty)).
\end{eqnarray}
\end{theorem}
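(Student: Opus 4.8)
The plan is to solve the transport-type ODE \eqref{204} explicitly in the time variable, to substitute the solution into the output formula \eqref{206}, and then to reduce the resulting kernel to the Riemann--Liouville kernel $(t-s)^{-\alpha}/\Gamma(1-\alpha)$ by an explicit Gamma-integral computation combined with Euler's reflection formula. Throughout, the spatial variable $x$ enters only as a passive parameter, since the input $U=U(t)$ carries no $x$-dependence.

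First I would treat \eqref{204} as a first-order linear ODE in $t$ for each fixed $\xi$. Using the integrating factor $e^{\xi^{2}t}$ together with the initial condition \eqref{205}, Duhamel's formula gives
\[
\varphi(t,\xi) = \mu(\xi)\int_{0}^{t} e^{-\xi^{2}(t-s)}\,U(s)\,ds.
\]
Plugging this into \eqref{206} and using $\mu(\xi)^{2}=|\xi|^{2\alpha-1}$, after interchanging the $\xi$- and $s$-integrations by Fubini I would obtain
\[
{\mathcal O} = \frac{\sin(\alpha\pi)}{\pi}\int_{0}^{t} U(s)\left(\int_{\mathbb R}|\xi|^{2\alpha-1}e^{-\xi^{2}(t-s)}\,d\xi\right)ds.
\]

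The heart of the argument is the evaluation of the inner integral. Setting $\tau=t-s>0$, exploiting even symmetry, and substituting $w=\xi^{2}\tau$, the integral reduces to
\[
\int_{\mathbb R}|\xi|^{2\alpha-1}e^{-\xi^{2}\tau}\,d\xi = \tau^{-\alpha}\int_{0}^{\infty} w^{\alpha-1}e^{-w}\,dw = \Gamma(\alpha)\,\tau^{-\alpha}.
\]
Here the hypothesis $0<\alpha<1$ guarantees convergence: the Gaussian factor controls the behaviour at infinity, while $2\alpha-1>-1$ keeps the integrand integrable at the origin. Substituting back and applying Euler's reflection formula $\Gamma(\alpha)\Gamma(1-\alpha)=\pi/\sin(\alpha\pi)$ collapses the prefactor $\pi^{-1}\sin(\alpha\pi)\Gamma(\alpha)$ into $1/\Gamma(1-\alpha)$, so that
\[
{\mathcal O} = \frac{1}{\Gamma(1-\alpha)}\int_{0}^{t}(t-s)^{-\alpha}U(s)\,ds = I^{1-\alpha}U,
\]
which is exactly \eqref{207}.

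I expect the only genuine obstacle to be the rigorous justification of the Fubini interchange and the convergence of the singular $\xi$-integral near the diagonal $s=t$: the kernel $|\xi|^{2\alpha-1}e^{-\xi^{2}(t-s)}$ is not jointly integrable on $\mathbb R\times(0,t)$ up to $s=t$. To handle this I would first carry out the computation on $s\in(0,t-\varepsilon)$, where everything is absolutely convergent, and then pass to the limit $\varepsilon\to 0^{+}$, using the continuity of $U$ on $[0,\infty)$ to control the near-diagonal contribution. The remaining ingredients --- the elementary ODE solve and the standard Gamma-function identity --- are entirely routine.
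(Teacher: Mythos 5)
Your proof is correct and follows essentially the same route as the paper, which does not prove the theorem itself but cites Mbodje \cite{15} and sketches exactly this computation (integrating-factor solution of \eqref{204}, substitution into \eqref{206}, Fubini, and the Gamma-function identity). One small remark: your worry about the Fubini step is unfounded, since the kernel \emph{is} jointly integrable --- by Tonelli, $\int_0^t\int_{\mathbb R}|\xi|^{2\alpha-1}e^{-\xi^2(t-s)}|U(s)|\,d\xi\,ds=\Gamma(\alpha)\int_0^t(t-s)^{-\alpha}|U(s)|\,ds<+\infty$ because $\alpha<1$ and $U$ is bounded on $[0,t]$ --- so your $\varepsilon$-truncation argument, while valid, is unnecessary.
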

\noindent
The strategy for achieving our target is related to the elimination of fractional derivatives in time from the domain condition in the system \eqref{104}. To do this, setting $\mu(\xi) = |\xi|^{(2\alpha - 1)/2},$ $\xi\in\mathbb{R},$ $\mathfrak{C} = \pi^{-1}\sin(\alpha\pi),$ and exploiting the technique from \cite{huang}, we reduce \eqref{104} to the system \eqref{209}. In fact, we will introduce the equation
\begin{align*}
& \varphi_{t}({\pmb{\color{black}{x}}},\,t,\,\xi) + (\xi^{2} + \eta)\varphi({\pmb{\color{black}{x}}},\,t,\,\xi) - u({\pmb{\color{black}{x}}},\,t)\mu(\xi) = 0,\quad \xi\in\mathbb{R},\  \eta\geq 0,\  t > 0, \\
& \varphi({\pmb{\color{black}{x}}},\,0,\,\xi) = 0,
\end{align*}
where $\mu(\xi) = |\xi|^{(2\alpha - 1)/2}.$ Multiplying the above equation by $e^{(\xi^{2} + \eta)t}$ and integrating we get
\begin{eqnarray*}
\varphi(t,\,\xi) = \int_{0}^{t}\mu(\xi)u({\pmb{\color{black}{x}}},\,t)e^{-(\xi^{2} + \eta)(t - s)}ds
\end{eqnarray*}
and
\begin{eqnarray*}
\int_{\mathbb{R}}\mu(\xi)\varphi({\pmb{\color{black}{x}}},\,t,\,\xi)d\xi=\int_{\mathbb{R}}\int_{0}^{t}\mu^{2}(\xi)u({\pmb{\color{black}{x}}},\,t)e^{-(\xi^{2} + \eta)(t - s)}dsd\xi.
\end{eqnarray*}
On the other hand, using the Fubini Theorem and recalling the definition of the Gamma function, we get that
\begin{equation}
\label{208}
\partial_{t}^{\alpha,\,\eta}u({\pmb{\color{black}{x}}},\,t) =  \mathfrak{C}\int_{\mathbb{R}}\mu(\xi)\varphi({\pmb{\color{black}{x}}},\,t,\,\xi)d\xi.
\end{equation}
For more details of this deduction see \cite{15}. Thus, we reformulate system \eqref{103} using Theorem
\ref{theorem51}, that means, this system can be included into the augmented model

\begin{eqnarray}
\label{209}
\left\lbrace
\begin{array}{l}
\rho_{1}u_{tt} - a_{11}\Delta u - a_{12}\Delta v + \alpha(u - v) + \displaystyle\mathfrak{C}\int_{\mathbb{R}}\mu(\xi)\varphi_{1}({\pmb{\color{black}{x}}},\,t,\,\xi)d\xi = 0,  \\
\rho_{2}v_{tt} - a_{12}\Delta u - a_{22}\Delta v- \alpha(u - v) + \displaystyle\mathfrak{C}\int_{\mathbb{R}}\mu(\xi)\varphi_{2}({\pmb{\color{black}{x}}},\,t,\,\xi)d\xi  = 0 \\
\varphi_{1t}({\pmb{\color{black}{x}}},\,t,\,\xi) + (\xi^{2} + \eta)\varphi_{1}({\pmb{\color{black}{x}}},\,t,\,\xi)  = \mu(\xi)u_{t}({\pmb{\color{black}{x}}},\,t), \\ 
\varphi_{2t}({\pmb{\color{black}{x}}},\,t,\,\xi) + (\xi^{2} + \eta)\varphi_{2}({\pmb{\color{black}{x}}},\,t,\,\xi)  = \mu(\xi)v_{t}({\pmb{\color{black}{x}}},\,t), \\ 
u({\pmb{\color{black}{x}}},\,0) = u_{0}({\pmb{\color{black}{x}}}),\quad u_{t}({\pmb{\color{black}{x}}},\,0) = u_{1}({\pmb{\color{black}{x}}}),\quad  {\pmb{\color{black}{x}}}\in \Omega \\
v({\pmb{\color{black}{x}}},\,0) = v_{0}({\pmb{\color{black}{x}}}),\quad v_{t}({\pmb{\color{black}{x}}},\,0) = v_{1}({\pmb{\color{black}{x}}}),\quad  {\pmb{\color{black}{x}}}\in \Omega  \\
u({\pmb{\color{black}{x}}},\,t) = v({\pmb{\color{black}{x}}},\,t) = 0,\quad \forall\,({\pmb{\color{black}{x}}},\,t)\in\partial\Omega\times (0,\,+\infty), \\
\varphi_{1}({\pmb{\color{black}{x}}},\,0,\,\xi) = 0,\quad \varphi_{2}({\pmb{\color{black}{x}}},\,0,\,\xi) = 0,\quad \forall\,\xi \in \mathbb{R},
\end{array}
\right. 
\end{eqnarray}
where $u = u({\pmb{\color{black}{x}}},\,t),$ $v = v({\pmb{\color{black}{x}}},\,t),$ are real-valued functions and $({\pmb{\color{black}{x}}},\,t,\,\xi)\in \Omega\times (0,\,+\infty)\times \mathbb{R}.$ 
 
\medskip

We shall consider the following technical lemmas. Lemma \ref{L2.3} will be used for well-posedness and Lemma \ref{L2.4} will be used for strong stability proof.

\begin{lemma}
\label{L2.3}
If $0 < \alpha < 1$ and $\eta\ge 0,$ then 
\begin{align*}	
C(\alpha,\,\eta):=\displaystyle\int_{\mathbb{R}}\dfrac{|\xi|^{2\alpha - 1}d\xi}{(\xi^{2} + \eta + 1)} < +\infty
\quad {\rm and} \quad D(\alpha,\,\eta):=\displaystyle\int_{\mathbb{R}} \dfrac{|\xi|^{2\alpha - 1}d\xi}{(\xi^2 + \eta + 1)^{2}} < +\infty.
	\end{align*}
\end{lemma}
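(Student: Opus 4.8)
The plan is to verify integrability by examining the only two places where the integrands can fail to be summable: the singularity of $|\xi|^{2\alpha-1}$ at $\xi=0$ (which is genuine when $\alpha<1/2$) and the behaviour as $|\xi|\to\infty$. First I would use the evenness of both integrands in $\xi$ to reduce to integrals over $(0,\infty)$, and then split each such integral into the piece over $(0,1]$ and the piece over $[1,\infty)$, estimating the two regions by different elementary bounds.

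Near the origin the denominators cause no trouble. Since $\eta\ge 0$ we have $\xi^2+\eta+1\ge 1$ everywhere, so on $(0,1]$ the integrand of $C(\alpha,\eta)$ is dominated by $\xi^{2\alpha-1}$, and $\int_0^1 \xi^{2\alpha-1}\,d\xi = 1/(2\alpha)<+\infty$ precisely because $\alpha>0$. This disposes of the potential singularity at $\xi=0$.

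At infinity I would instead bound the denominator from below by $\xi^2$, which gives $|\xi|^{2\alpha-1}/(\xi^2+\eta+1)\le \xi^{2\alpha-3}$ on $[1,\infty)$; then $\int_1^\infty \xi^{2\alpha-3}\,d\xi$ converges because $2\alpha-3<-1$, i.e. because $\alpha<1$. Combining the two regions yields $C(\alpha,\eta)<+\infty$. For the second integral no separate estimate is needed: since $\xi^2+\eta+1\ge 1$ for every $\xi$, we have $(\xi^2+\eta+1)^2\ge \xi^2+\eta+1$, so the integrand of $D(\alpha,\eta)$ is pointwise dominated by that of $C(\alpha,\eta)$, whence $D(\alpha,\eta)\le C(\alpha,\eta)<+\infty$.

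The argument has no real obstacle; the only point worth flagging is that the two hypotheses are each used exactly once — $\alpha>0$ for integrability at the origin and $\alpha<1$ for integrability at infinity — so the assumption $0<\alpha<1$ is sharp and cannot be relaxed at either end. One could alternatively evaluate $C(\alpha,\eta)$ in closed form through the standard Mellin integral $\int_0^\infty x^{s-1}/(x^2+a^2)\,dx=\tfrac{\pi}{2}\,a^{s-2}/\sin(\pi s/2)$ with $s=2\alpha$ and $a^2=\eta+1$, but the comparison estimates above are all that the finiteness statement requires.
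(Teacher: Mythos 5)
Your proof is correct, and it differs from the paper's in the one step that requires work: the finiteness of $C(\alpha,\eta)$. You argue by symmetry plus a split at $\xi=1$, dominating the integrand by $\xi^{2\alpha-1}$ near the origin (using $\xi^2+\eta+1\ge 1$) and by $\xi^{2\alpha-3}$ at infinity (using $\xi^2+\eta+1\ge\xi^2$); this is fully self-contained, with the roles of $\alpha>0$ and $\alpha<1$ isolated exactly as you say. The paper instead factors out $(1+\eta)$ and performs the change of variable $\sigma=1+\xi^2/(1+\eta)$, arriving at
\begin{equation*}
C(\alpha,\eta)=\frac{1}{(1+\eta)^{1-\alpha}}\int_{1}^{+\infty}\frac{d\sigma}{\sigma(\sigma-1)^{1-\alpha}},
\end{equation*}
and then simply asserts that this Beta-type integral is finite — an assertion whose verification needs precisely the same two endpoint checks (integrability at $\sigma=1$ from $\alpha>0$, at $\sigma=\infty$ from $\alpha<1$) that you carry out directly. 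So your argument is the more elementary and more complete one, while the paper's substitution buys extra information that bare finiteness does not give: the explicit scaling $C(\alpha,\eta)=\mathrm{const}(\alpha)\,(1+\eta)^{\alpha-1}$ in $\eta$ (the constant being $\pi/\sin(\pi\alpha)$, consistent with the Mellin evaluation you mention as an alternative), although the paper never exploits this dependence later. The treatment of $D(\alpha,\eta)$ is identical in both proofs: since $\xi^2+\eta+1\ge 1$, one has $(\xi^2+\eta+1)^2\ge \xi^2+\eta+1$, hence $D(\alpha,\eta)\le C(\alpha,\eta)$. One small caveat on your closing remark: the sharpness of $0<\alpha<1$ holds for $C$ (and hence for the lemma as stated), but not for $D$ taken alone, whose extra power of decay at infinity makes it finite for all $0<\alpha<2$.
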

\begin{proof} Note that 
\begin{align*}		
C(\alpha,\,\eta):=\displaystyle\int_{\mathbb{R}}\frac{|\xi|^{2\alpha - 1}d\xi}{(\xi^{2} + \eta + 1)} = \dfrac{2}{(1 + \eta)}\displaystyle\int_{0}^{+\infty}\dfrac{|\xi|^{2\alpha - 1}d\xi}{1 + \dfrac{\xi^2}{(1 + \eta)}}.
\end{align*}	
As $0<\alpha<1,$ by making a change of variable, we have that 
$$
0<C(\alpha,\,\eta):=\dfrac{1}{(1 + \eta)^{1 - \alpha}}\displaystyle\int_{1}^{+\infty}\dfrac{d\sigma}{\sigma(\sigma - 1)^{1 - \alpha}} < + \infty.
$$
Moreover, note that 
$$
D(\alpha,\,\eta):=\displaystyle\int_{\mathbb{R}} \dfrac{|\xi|^{2\alpha - 1}\,d\xi}{(\xi^2 + \eta + 1)^{2}} \le \displaystyle\int_{\mathbb{R}}\dfrac{|\xi|^{2\alpha - 1}d\xi}{(\xi^{2} + \eta + 1)} = C(\alpha,\,\eta) < +\infty. 
$$		
\end{proof}
\begin{lemma}
\label{L2.m}
If $0<\omega<1,$ $\lambda\geq 0$ and $\delta > 0,$ then  
	\begin{align*}	
		J(\lambda,\,\omega,\,\delta):=\displaystyle\int_{\mathbb{R}} \dfrac{|y|^{2\omega - 1}dy}{y^2 + \delta + \lambda} < +\infty
		\qquad {\rm and} \qquad L(\lambda,\omega,\,\delta):=\displaystyle\int_{\mathbb{R}} \dfrac{|y|^{2\omega-1}dy}{(y^2 + \delta + \lambda)^2}<+\infty.
	\end{align*}
	\end{lemma}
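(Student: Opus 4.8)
The plan is to recognize that Lemma~\ref{L2.m} is the exact analogue of Lemma~\ref{L2.3}, with the constant $\eta+1$ replaced by $c:=\delta+\lambda$, which is strictly positive since $\delta>0$ and $\lambda\ge 0$. Consequently the same change of variable should reduce both integrals to convergent Beta-type integrals, the only genuine difference being that here $c$ is not assumed to be at least $1$.

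First I would treat $J(\lambda,\,\omega,\,\delta)$. Using that the integrand is even, I write $J=\tfrac{2}{c}\int_{0}^{\infty}\tfrac{y^{2\omega-1}}{1+y^{2}/c}\,dy$ and perform the substitution $\sigma=1+y^{2}/c$, exactly as in the proof of Lemma~\ref{L2.3}. This yields
\[
J(\lambda,\,\omega,\,\delta)=\frac{1}{c^{\,1-\omega}}\int_{1}^{\infty}\frac{d\sigma}{\sigma(\sigma-1)^{1-\omega}}.
\]
The remaining one-dimensional integral converges: near $\sigma=1$ the integrand behaves like $(\sigma-1)^{-(1-\omega)}$, which is integrable because $1-\omega<1$ (as $\omega>0$); and as $\sigma\to\infty$ it behaves like $\sigma^{-(2-\omega)}$, which is integrable because $2-\omega>1$ (as $\omega<1$). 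Hence $J<+\infty$.

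For $L(\lambda,\,\omega,\,\delta)$ I would \emph{not} repeat the crude comparison $L\le J$ used in Lemma~\ref{L2.3}, since that bound rested on $\xi^{2}+\eta+1\ge 1$, whereas here $c=\delta+\lambda$ may be smaller than $1$ and the inequality $(y^{2}+c)^{2}\ge (y^{2}+c)$ can fail near $y=0$. Instead, I observe that $y^{2}+c\ge c$ for every $y$, so $(y^{2}+c)^{2}\ge c\,(y^{2}+c)$, and therefore
\[
L(\lambda,\,\omega,\,\delta)\le \frac{1}{c}\int_{\mathbb{R}}\frac{|y|^{2\omega-1}}{y^{2}+c}\,dy=\frac{1}{\delta+\lambda}\,J(\lambda,\,\omega,\,\delta)<+\infty.
\]
Equivalently, one could run the same substitution to obtain $L=c^{\,\omega-2}\int_{1}^{\infty}\sigma^{-2}(\sigma-1)^{\omega-1}\,d\sigma$, which converges for the same two endpoint reasons (integrability at $\sigma=1$ because $1-\omega<1$, and at infinity because now $3-\omega>1$).

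The hard part is essentially conceptual rather than computational: everything reduces to a transcription of the previous proof with $\eta+1$ replaced by $\delta+\lambda$, and the single place where one must be attentive is precisely the bound for $L$. Because $\delta+\lambda$ is allowed to be arbitrarily small and need not exceed $1$, the naive domination of $L$ by $J$ is unavailable, and one must instead use the sharper factorization $(y^{2}+c)^{2}\ge c(y^{2}+c)$ (or the explicit substitution) to secure finiteness.
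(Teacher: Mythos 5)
Your proof is correct and follows the same substitution-based route as the paper; in fact, the paper's entire proof of Lemma \ref{L2.m} is the single sentence ``Analogously to the proof of Lemma \ref{L2.3}.'' Your treatment of $L$, however, contains a genuine and warranted refinement of that analogy. A literal transcription of the proof of Lemma \ref{L2.3} would bound $L$ by $J$ via $(y^2+\delta+\lambda)^2 \ge y^2+\delta+\lambda$, which is exactly the step the paper uses to obtain $D(\alpha,\,\eta)\le C(\alpha,\,\eta)$; that step is valid there only because $\xi^2+\eta+1\ge 1$, and it fails here near $y=0$ whenever $\delta+\lambda<1$, which the hypotheses $\delta>0$, $\lambda\ge 0$ permit. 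Your replacement inequality $(y^2+c)^2\ge c\,(y^2+c)$ with $c=\delta+\lambda>0$, giving $L\le J/c$ (or, equivalently, the explicit substitution yielding $L=c^{\,\omega-2}\int_1^\infty \sigma^{-2}(\sigma-1)^{\omega-1}\,d\sigma$, convergent at both endpoints since $1-\omega<1$ and $3-\omega>1$), closes this gap at the cost of a harmless factor $1/c$. So your argument is not merely ``analogous'': it is the correct way to make the paper's one-line analogy rigorous, and the attention you draw to the small-$c$ regime identifies precisely the one step where a naive transcription would break down.
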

\begin{proof}  Analogously to the proof of Lemma \ref{L2.3}. \end{proof}
	
\begin{lemma}
\label{L2.4}
Let $0<\alpha<1.$ If $\eta>0$ and $\lambda\in \mathbb{R},$ or if $\eta = 0$ and $\lambda>0,$ then 
\begin{align*}	
E(\lambda,\,\alpha,\,\eta):= \displaystyle\int_{\mathbb{R}}\dfrac{|\xi|^{2\alpha - 1}d\xi}{(\xi^{2} + \eta + i\lambda)} < +\infty. 
\end{align*}
Furthermore, for $h\in L^{2}(\mathbb{R}; L^{2}(\Omega)),$ we have that 
\begin{align*}
H(x,\,\lambda,\,\alpha,\,\eta):= \displaystyle\int_{\mathbb{R}}\dfrac{|\xi|^{\frac{2\alpha - 1}{2}}h(x,\,\xi)d\xi}{\xi^{2} + \eta + i\lambda}\in L^{2}(\Omega). 
\end{align*}
\end{lemma}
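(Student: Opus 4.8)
The plan is to reduce both assertions to absolute-value estimates and then invoke the finiteness results already recorded in Lemma \ref{L2.m} (equivalently Lemma \ref{L2.3}). The only genuine issue is to bound the modulus of the complex denominator $\xi^2 + \eta + i\lambda$ from below by an expression of the form $c\,(\xi^2 + \text{const})$ with a strictly positive constant, so that the hypotheses on $(\eta,\lambda)$ --- which are exactly what prevents the denominator from vanishing for real $\xi$ --- can be exploited. Everything else is a routine application of Cauchy--Schwarz and Fubini--Tonelli.

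For the first claim I would start from
$$|E(\lambda,\alpha,\eta)| \le \int_{\mathbb{R}} \frac{|\xi|^{2\alpha-1}}{|\xi^2 + \eta + i\lambda|}\,d\xi,$$
and distinguish the two admissible regimes. If $\eta > 0$ (with $\lambda\in\mathbb{R}$ arbitrary), then $|\xi^2 + \eta + i\lambda| \ge \xi^2 + \eta$ because the modulus dominates the real part, so the integrand is bounded by $|\xi|^{2\alpha-1}/(\xi^2+\eta)$, whose integral is finite by Lemma \ref{L2.m} (with $\omega=\alpha$, $\delta=\eta>0$, and the free parameter equal to $0$). If instead $\eta = 0$ and $\lambda > 0$, then $|\xi^2 + i\lambda|^2 = \xi^4 + \lambda^2 \ge \tfrac12(\xi^2+\lambda)^2$ --- this is just $(\xi^2-\lambda)^2\ge 0$ --- which gives $|\xi^2 + i\lambda| \ge \tfrac1{\sqrt2}(\xi^2+\lambda)$ and again reduces the integral to a convergent one of the form $J(\cdot)$ from Lemma \ref{L2.m}, with $\delta+\lambda=\lambda>0$. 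Near the origin the weight $|\xi|^{2\alpha-1}$ is integrable since $2\alpha-1>-1$, and at infinity the integrand decays like $|\xi|^{2\alpha-3}$ with $2\alpha-3<-1$; the lower bounds above guarantee that the denominator stays away from zero on the whole line, which is where the hypotheses enter.

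For the membership $H(\cdot,\lambda,\alpha,\eta)\in L^2(\Omega)$ I would estimate $H$ pointwise in $x$ by Cauchy--Schwarz in the variable $\xi$, splitting the integrand as $\bigl(|\xi|^{(2\alpha-1)/2}/|\xi^2+\eta+i\lambda|\bigr)\cdot |h(x,\xi)|$:
$$|H(x,\lambda,\alpha,\eta)|^2 \le \left(\int_{\mathbb{R}} \frac{|\xi|^{2\alpha-1}}{|\xi^2+\eta+i\lambda|^2}\,d\xi\right)\left(\int_{\mathbb{R}} |h(x,\xi)|^2\,d\xi\right).$$
The first factor is a finite constant $C=C(\lambda,\alpha,\eta)$ independent of $x$: applying the same two-case lower bound on $|\xi^2+\eta+i\lambda|^2$ reduces it to the second integral $L(\cdot)$ of Lemma \ref{L2.m}, which is finite. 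Integrating this inequality over $\Omega$ and exchanging the order of integration by Fubini--Tonelli yields
$$\int_\Omega |H(x,\lambda,\alpha,\eta)|^2\,dx \le C \int_\Omega\!\!\int_{\mathbb{R}} |h(x,\xi)|^2\,d\xi\,dx = C\,\|h\|_{L^2(\mathbb{R};\,L^2(\Omega))}^2 < +\infty,$$
so that $H\in L^2(\Omega)$. The main (and essentially only) obstacle is the denominator lower bound in the degenerate case $\eta=0$, where the real part $\xi^2$ vanishes at $\xi=0$ and one must use the positivity of $\lambda$ through the elementary inequality $\xi^4+\lambda^2\ge\tfrac12(\xi^2+\lambda)^2$ to retain control near the origin.
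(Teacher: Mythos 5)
Your proof is correct, but for the finiteness of $E(\lambda,\alpha,\eta)$ it takes a different route from the paper. The paper multiplies by the conjugate and splits $E$ into its real and imaginary parts, $E = F + i\lambda G$ with $F=\int_{\mathbb{R}}\frac{(\xi^{2}+\eta)|\xi|^{2\alpha-1}}{\lambda^{2}+(\xi^{2}+\eta)^{2}}d\xi$ and $G=\int_{\mathbb{R}}\frac{|\xi|^{2\alpha-1}}{\lambda^{2}+(\xi^{2}+\eta)^{2}}d\xi$, and then verifies convergence of each by asymptotic analysis of the integrand as $|\xi|\to 0$ (where the hypotheses force $\lambda^{2}+\eta^{2}>0$) and as $|\xi|\to\infty$ (decay like $|\xi|^{2\alpha-3}$ and $|\xi|^{2\alpha-5}$). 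You instead keep the complex denominator, bound its modulus from below --- $|\xi^{2}+\eta+i\lambda|\ge \xi^{2}+\eta$ when $\eta>0$, and $|\xi^{2}+i\lambda|\ge \tfrac{1}{\sqrt{2}}(\xi^{2}+\lambda)$ when $\eta=0,\ \lambda>0$, via $(\xi^{2}-\lambda)^{2}\ge 0$ --- and then quote Lemma \ref{L2.m}. This is a clean unification of the two cases, and it has the side benefit of actually using Lemma \ref{L2.m}, which the paper states but whose proof of Lemma \ref{L2.4} re-derives the relevant asymptotics by hand; the paper's decomposition, on the other hand, exhibits the real and imaginary parts of $E$ explicitly, information of the sort that is convenient to have when these quantities reappear in the resolvent computations of Proposition \ref{P4.3}. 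For the second claim ($H\in L^{2}(\Omega)$) your argument and the paper's coincide: Cauchy--Schwarz in $\xi$ with the weight $|\xi|^{2\alpha-1}/\bigl((\xi^{2}+\eta)^{2}+\lambda^{2}\bigr)$, followed by integration over $\Omega$ (you are in fact slightly more careful, writing the inequality that the paper's displayed ``equality'' should be, and bounding the constant via your modulus estimates rather than leaving it as $G$).
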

\begin{proof}
Note that $E(\lambda,\,\alpha,\,\eta) = F(\lambda,\,\alpha,\,\eta) + i\lambda G(\lambda,\,\alpha,\,\eta),$ where 
\begin{align*}
F(\lambda,\,\alpha,\,\eta):=\displaystyle\int_{\mathbb{R}}\dfrac{(\xi^{2} + \eta)|\xi|^{2\alpha - 1}d\xi}{\lambda^{2} + (\xi^{2} + \eta)^{2}}\quad {\rm and}\quad G(\lambda,\,\alpha,\,\eta):=\displaystyle\int_{\mathbb{R}}\dfrac{|\xi|^{2\alpha - 1}d\xi}{\lambda^{2} + (\xi^{2} + \eta)^{2}}.
\end{align*}
Using that
\begin{equation*}
G(\lambda,\,\alpha,\,\eta) = 2\int_{0}^{1}\dfrac{|\xi|^{2\alpha - 1}d\xi}{\lambda^{2} + (\xi^{2} + \delta)^{2}} + 2\int_{1}^{+\infty}\dfrac{|\xi|^{2\alpha - 1}d\xi}{\lambda^2 + (\xi^{2} + \alpha)^{2}}.
\end{equation*}
Since in both cases, ($\eta > 0$ and $\lambda\in \mathbb{R}$) or ($\eta=0$ and $\lambda>0$), we obtain
$$
\dfrac{|\xi|^{2\alpha - 1}}{\lambda^2 + (\xi^{2} + \eta)^2}\sim \dfrac{|\xi|^{2\alpha-1}}{\lambda^2 + \eta^2}\quad \mbox{as}\quad |\xi|\to 0\quad \mbox{and}\quad \dfrac{|\xi|^{2\alpha - 1}}{\lambda^{2} + (\xi^{2} + \eta)^{2}}\sim \dfrac{1}{|\xi|^{5 - 2\alpha}}\quad \mbox{as}\quad |\xi|\to +\infty,
$$
it follows that $G(\lambda,\,\eta) < + \infty.$ In a similar, 
\begin{equation*}
F(\lambda,\,\alpha,\,\eta) = 2\int_{0}^{1}\dfrac{(\xi^{2} + \eta)|\xi|^{2\alpha - 1}d\xi}{\lambda^{2} + (\xi^{2} + \eta)^{2}} + 2\int_{1}^{+\infty}\dfrac{(\xi^{2} + \eta)|y|^{2\alpha - 1}d\xi}{\lambda^2 + (\xi^{2} + \alpha)^{2}},
\end{equation*}		
and, if ($\eta>0$ and $\lambda\in \mathbb{R}$) or  ($\eta = 0$ and $\lambda>0$), we obtain 
$$
\dfrac{(\xi^{2} + \eta)|\xi|^{2\alpha - 1}}{\lambda^{2} + (\xi^{2} + \eta)^{2}}\sim \dfrac{(\xi^{2} + \eta)|\xi|^{2\alpha-1}}{\lambda^{2} + \eta^{2}}\quad \mbox{for}\quad |\xi|\to 0 
$$
and 
$$
\dfrac{(\xi^{2} + \eta)|\xi|^{2\alpha - 1}}{\lambda^{2} + (\xi^{2} + \eta)^{2}}\sim \dfrac{1}{|\xi|^{3 - 2\alpha}}\quad \mbox{for}\quad |\xi|\to +\infty.
$$	
Thus, $F(\lambda,\,\alpha,\,\eta) < +\infty,$ and consequently, it follows that $E(\lambda,\,\alpha,\,\eta) < +\infty.$ Moreover, from the Cauchy-Schwarz inequality and the fact that $h\in L^{2}(\mathbb{R}; L^{2}(\Omega)),$ it follows that	
\begin{equation*}
\int_{\Omega}\left|H(x,\,\lambda,\,\alpha,\,\eta)\right|^{2}d{\pmb{\color{black}{x}}} = \left(\int_{\mathbb{R}}\dfrac{|\xi|^{2\alpha - 1}d\xi}{\lambda^{2} + (\xi^{2} + \eta)^{2}}\right)\int_{\Omega}\int_{\mathbb{R}}|h({\pmb{\color{black}{x}}},\,\xi)|^{2}d\xi d{\pmb{\color{black}{x}}} < +\infty. 
\end{equation*}
\end{proof}
We close this section with an important functional analysis result that will be very important in this article.

\begin{theorem}(Fredholm alternative \cite{Brezis})
\label{Fredholm}
	Let $X$ be a Banach space. If $\mathcal{L}:X\to X$ is a compact linear operator on $X,$ then
	\begin{enumerate}
		\item [(i)] $\ker\left(I - \mathcal{L}\right)$ is finite dimension.
		\item [(ii)] $\left(I - \mathcal{L}\right)(X)$ is closed.
		\item [(iii)] $\ker\left(I - \mathcal{L}\right)=\{0\}\Longleftrightarrow \left(I -\mathcal{L}\right)(X) = X.$
	\end{enumerate} 
\end{theorem}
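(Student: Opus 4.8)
The plan is to set $T := I - \mathcal{L}$ and prove the three assertions in turn, the recurring tools being Riesz's lemma (for a proper closed subspace $Y \subsetneq X$ and $\varepsilon \in (0,1)$ there is a unit vector $x$ with $\operatorname{dist}(x,Y) \ge 1-\varepsilon$) together with the fact that a normed space whose closed unit ball is compact must be finite dimensional. For part (i), I would observe that $\mathcal{L}$ acts as the identity on $\ker T$, since $\mathcal{L}x = x$ there. Hence the closed unit ball of $\ker T$ coincides with its image under $\mathcal{L}$, which is relatively compact because $\mathcal{L}$ is compact and $\ker T$ is closed. A space with compact unit ball is finite dimensional, so $\dim\ker T < \infty$.

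For part (ii), since $\ker T$ is finite dimensional it admits a closed complement $X_0$, giving $X = \ker T \oplus X_0$ with $T(X) = T(X_0)$ and $T|_{X_0}$ injective. I would then show that $T|_{X_0}$ is bounded below: were it not, there would be unit vectors $x_n \in X_0$ with $T x_n \to 0$; by compactness a subsequence satisfies $\mathcal{L}x_n \to y$, and then $x_n = T x_n + \mathcal{L}x_n \to y$, so $\|y\|=1$, $y \in X_0$ and $Ty = 0$, forcing $y \in \ker T \cap X_0 = \{0\}$, a contradiction. Since a bounded-below operator has closed range, $T(X)$ is closed.

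The genuine content is part (iii). For the implication $\ker T = \{0\} \Rightarrow T(X) = X$, I would argue by contradiction on the descending chain $X \supseteq T(X) \supseteq T^2(X) \supseteq \cdots$; each $T^k(X)$ is closed because $T^k = I - \mathcal{L}_k$, where $\mathcal{L}_k$ is a finite combination of the compact powers $\mathcal{L}^j$ ($j \ge 1$), hence compact, so part (ii) applies to $T^k$. If $T$ is injective but not surjective these inclusions are strict, and Riesz's lemma supplies unit vectors $y_k \in T^k(X)$ with $\operatorname{dist}(y_k, T^{k+1}(X)) \ge \tfrac12$; for $k < l$ one checks that $\mathcal{L}y_k - \mathcal{L}y_l = y_k - z$ with $z = T y_k + y_l - T y_l \in T^{k+1}(X)$, so $\|\mathcal{L}y_k - \mathcal{L}y_l\| \ge \tfrac12$, contradicting compactness of $\mathcal{L}$. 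The converse $T(X) = X \Rightarrow \ker T = \{0\}$ is symmetric, run on the ascending chain $\ker T \subseteq \ker T^2 \subseteq \cdots$: surjectivity of $T$ forces strict inclusions whenever $\ker T \ne \{0\}$, and the same Riesz-lemma construction again contradicts compactness.

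I expect the main obstacle to be precisely this part (iii): showing that the chains of iterated ranges (respectively kernels) cannot stay strictly monotone, i.e.\ that they stabilize after finitely many steps. This is exactly where compactness is indispensable, since Riesz's lemma converts strict monotonicity into a bounded sequence whose $\mathcal{L}$-images are uniformly $\tfrac12$-separated, which no compact operator can produce. The remaining ingredients, namely finite dimensionality via Riesz's theorem, the existence of a closed complement to a finite-dimensional subspace, and the passage from bounded below to closed range, are comparatively soft.
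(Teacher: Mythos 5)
Your proposal is correct, but there is nothing in the paper to compare it against: the paper states this theorem as a known classical result, citing Brezis's book, and never proves it (it is only invoked later, in Proposition \ref{P4.3}, to reduce the surjectivity of $i\lambda I - \mathcal{A}$ to an injectivity statement via compactness of $\mathcal{M}^{-1}$). Measured against the proof in the cited reference, your argument follows the same lines for (i) (the closed unit ball of $\ker(I-\mathcal{L})$ is fixed pointwise by $\mathcal{L}$, hence compact, so Riesz's theorem applies), for (ii) (split off the finite-dimensional kernel, show $I-\mathcal{L}$ is bounded below on a closed complement, conclude the range is closed), and for the implication $\ker(I-\mathcal{L})=\{0\}\Rightarrow (I-\mathcal{L})(X)=X$ (strictly decreasing chain of closed ranges $(I-\mathcal{L})^k(X)$ plus Riesz's lemma, contradicting compactness). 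Where you genuinely depart from Brezis is the converse implication: Brezis obtains $(I-\mathcal{L})(X)=X\Rightarrow\ker(I-\mathcal{L})=\{0\}$ by duality, using Schauder's theorem that $\mathcal{L}^{*}$ is compact together with the orthogonality relation $\ker(I-\mathcal{L}^{*})=\bigl((I-\mathcal{L})(X)\bigr)^{\perp}$, whereas you run the ascending chain $\ker(I-\mathcal{L})^{k}$ and use surjectivity to force strict inclusions; your variant is self-contained (no adjoints, no Schauder theorem) and equally rigorous. Two details worth writing out if you expand the sketch: the strictness of the inclusions in both chains (if $y_{0}\notin(I-\mathcal{L})(X)$ and $I-\mathcal{L}$ is injective, then $(I-\mathcal{L})^{k}y_{0}$ lies in $(I-\mathcal{L})^{k}(X)$ but not in $(I-\mathcal{L})^{k+1}(X)$; dually, if $0\neq x_{1}\in\ker(I-\mathcal{L})$ and $I-\mathcal{L}$ is onto, any solution of $(I-\mathcal{L})^{k}x=x_{1}$ lies in $\ker(I-\mathcal{L})^{k+1}\setminus\ker(I-\mathcal{L})^{k}$), and the fact that a finite-dimensional subspace of a Banach space admits a closed complement, which requires Hahn--Banach.
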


\section{Setting of the Semigroup} \label{sec3}
\label{sgsetting}
In this section we establish the well-posedness of system \eqref{209}. We denote by $L^{2}(\Omega)$ the classical space of square integrable functions over $\Omega,$ with an abuse of writing for vector valued functions. Let $\|\cdot\|$ be the standard $L^{2}$-norm over $\Omega,$ induced by the scalar product. Recall also $H_{0}^{1}(\Omega),$ the classical homogeneous Hilbert space. We thus define the phase space associated with our set of equations \eqref{209} by 
\begin{align}
\label{301}
\mathcal{H} =  \left[H_{0}^{1}(\Omega)\right]^{2}\times [L^{2}(\Omega)]^{2} \times [L^{2}(\mathbb{R};\,L^{2}(\Omega))]^{2},
\end{align}
where $\mathbb{U} = (u,\,v,\,U,\,V,\,\varphi_{1},\,\varphi_{2})^{T},$ which is a Hilbert space endowed with the following 
inner product given by 
\begin{align}
\langle \mathbb{U},\,\widetilde{\mathbb{U}}\rangle_{{\mathcal H}} = &\ \rho_{1}\langle U,\,\widetilde{U} \rangle_{L^{2}(\Omega)} + \rho_{2}\langle V,\,\widetilde{V} \rangle_{L^{2}(\Omega)} + a_{11}\langle \nabla u,\,\nabla\widetilde{u}\rangle_{L^{2}(\Omega)} + a_{22}\langle \nabla v,\,\nabla\widetilde{v}\rangle_{L^{2}(\Omega)} \nonumber \\
& + a_{12}\left[\langle \nabla u,\,\nabla\widetilde{v}\rangle_{L^{2}(\Omega)} + \langle \nabla v,\,\nabla\widetilde{u}\rangle_{L^{2}(\Omega)}  \right] + \alpha\langle u - v,\,\widetilde{u} - \widetilde{v}\rangle_{L^{2}(\Omega)}\nonumber \\
& + \mathfrak{C}
\langle\varphi_{1},\,\widetilde{\varphi}_{1}\rangle_{L^{2}\left(\mathbb{R}; L^{2}(\Omega)\right)} + \mathfrak{C}
\langle\varphi_{2},\,\widetilde{\varphi}_{2}\rangle_{L^{2}\left(\mathbb{R}; L^{2}(\Omega)\right)}, 
\end{align}
where $
\mathbb{U} = (u,\,v,\,U,\,V,\,\varphi_{1},\,\varphi_{2})^{T}$ and $\widetilde{\mathbb{U}}
= (\widetilde{u},\,\widetilde{v},\,\widetilde{U},\,\widetilde{V},\,\widetilde{\varphi}_{1},\,\widetilde{\varphi}_{2})^{T}.$ The norm is given by
\begin{align*}
\|\mathbb{U}\|_{\mathcal H}^{2} = &\left[\rho_{1}\int_{\Omega}|u_{t}|^{2}d{\pmb{\color{black}{x}}} + \rho_{2}\int_{\Omega}|v_{t}|^{2}d{\pmb{\color{black}{x}}} + \int_{\Omega}X^{T}AXd{\pmb{\color{black}{x}}} + \alpha\int_{\Omega}|u - v|^{2}d{\pmb{\color{black}{x}}} \right.        \nonumber \\
& \left.\quad +\ \mathfrak{C}\|\varphi_{1}(t)\|_{L^{2}(\mathbb{R}; L^{2}(\Omega))}^{2} + \mathfrak{C}\|\varphi_{2}(t)\|_{L^{2}(\mathbb{R}; L^{2}(\Omega))}^{2}\right],
\end{align*}
where
\begin{eqnarray*}
X^{T}{\bf A}X = \left(
\begin{array}{cc}
\nabla u & \nabla v 
\end{array}
\right)\left(
\begin{array}{cc}
a_{11} & a_{12} \\
a_{12} & a_{22} \\
\end{array}
\right)\left(
\begin{array}{c}
\nabla u \\
\\
\nabla v \\
\end{array}
\right) = a_{11}|\nabla u|^{2} + 2a_{12}\nabla u\cdot\nabla v + a_{22}|\nabla v|^{2}  \geq 0.
\end{eqnarray*}

We now wish to transform the initial boundary value problem
\eqref{209} to an abstract problem in the Hilbert space
$\mathcal{H}.$ We introduce the functions $u_{t} = U,\,$ $v_{t} = V,\,$ and rewrite the system \eqref{209} under the form of an abstract evolution problem
\begin{eqnarray}
\label{303}\frac{d}{d{t}}\mathbb{U}(t) = \mathcal{A}\mathbb{U}(t),\quad \mathbb{U}(0) = \mathbb{U}_{0},\quad
\forall\,t > 0,
\end{eqnarray}
where 
$
\mathbb{U}=(u,\,v,\,U,\,V,\,\varphi_{1},\,\varphi_{2})^{T}\ {\rm and}\
\mathbb{U}_{0} = (u_{0},\,v_{0},\,u_{1},\,v_{1},\,0,\,0)^{T},
$
and the operator $\,\mathcal{A}$ is an unbounded linear operator defined as follows $\,\mathcal{A}:\mathcal{D}(\mathcal{A})\subset
\mathcal{H}\rightarrow \mathcal{H}$ with
\begin{equation}
\label{304}\mathcal{A}\left(
\begin{array}{c}
u \\
v \\
U \\
V \\
\varphi_{1} \\
\varphi_{2} 
\end{array}
\right) =\left(
\begin{array}{c} 
U \\
V \\
\displaystyle \frac{1}{\rho_{1}}\left[a_{11}\Delta u + a_{12}\Delta v - \alpha(u - v) - \mathfrak{C}\displaystyle\int_{\mathbb{R}}\mu(\xi)\varphi_{1}(\xi)d\xi \right]\\
\displaystyle  \frac{1}{\rho_{2}}\left[a_{12}\Delta u + a_{22}\Delta v + \alpha(u - v) - \mathfrak{C}\displaystyle\int_{\mathbb{R}}\mu(\xi)\varphi_{2}(\xi)d\xi \right]  \\
-(\xi^{2} + \eta)\varphi_{1}(\xi) + \mu(\xi)U \\
-(\xi^{2} + \eta)\varphi_{2}(\xi) + \mu(\xi)V 
\end{array}
\right)
\end{equation}
with domain 
\begin{align*}
\mathcal{D}(\mathcal{A}) = &
\left\{\mathbb{U} = (u,\,v,\,U,\,V,\,\varphi_{1},\,\varphi_{2})^{T}\in {\mathcal H}: \, U,\,V\in H_{0}^{1}(\Omega),\right.  \\
& \qquad a_{11}u + a_{12}v \in H^{2}(\Omega)\ {\rm and}\  a_{12}u + a_{22}v \in H^{2}(\Omega) \\
& \qquad a_{11}u + a_{12}v - \mathfrak{C}\displaystyle\int_{\mathbb{R}}\mu(\xi)\varphi_{1}(\xi)d\xi \in L^{2}(\Omega),\  a_{12}u + a_{22}v - \mathfrak{C}\displaystyle\int_{\mathbb{R}}\mu(\xi)\varphi_{2}(\xi)d\xi \\
& \qquad \xi\varphi_{1}\in L^{2}(\mathbb{R};\,L^{2}(\Omega)),\,-(\xi^{2} + \eta)\varphi_{1} + \mu(\xi)U \in
L^{2}\left(\mathbb{R};\,L^{2}(\Omega)\right), \\
& \qquad \xi \varphi_{2}\in L^{2}(\mathbb{R};\,L^{2}(\Omega)),\,-(\xi^{2} + \eta)\varphi_{2} + \mu(\xi)V \in
L^{2}\left(\mathbb{R};\,L^{2}(\Omega)\right)\}.
\end{align*}

Let the self-adjoint and strictly positive  operator $\, A :\mathcal{D}(A)\subset
H\rightarrow H$ which is given by
\begin{equation}
\label{304b} 
A\left(
\begin{array}{c}
u \\
v 
\end{array}
\right) =\left(
\begin{array}{c} 
- \displaystyle \frac{1}{\rho_{1}}\left[a_{11}\Delta u + a_{12}\Delta v - \alpha(u - v) \right]\\
- \displaystyle  \frac{1}{\rho_{2}}\left[a_{12}\Delta u + a_{22}\Delta v + \alpha(u - v) \right]  
\end{array}
\right)
\end{equation}
with the domain
$$\mathcal{D}(A) = 
\left\{\mathbb{U} = (u,\,v)^{T}\in H: \, a_{11}u + a_{12}v \in H^{2}(\Omega) \cap H^1_0(\Omega)\ {\rm and}\  a_{12}u + a_{22}v \in H^{2}(\Omega)  \cap H^1_0(\Omega) \right\}.
$$  
and $H = L^2(\Omega) \times L^2(\Omega)$.

\medskip

So, the problem (\ref{208}) can be rewritten as in \cite{kais,kaisI}:

\begin{equation}
\label{abstract}
\left\{
\begin{array}{ll}
 Z_{\bf tt}({\bf t}) + AZ({\bf t}) + BB^* \partial_{\bf t}^{\alpha,\eta}Z ({\bf t})= 0, \ {\bf t} > 0 \\
Z(0) = Z_{0}, Z_{\bf t}(0) =  Z_{1},
\end{array}
\right.
\end{equation}

where $B = B^*=I_{H}, Z({\bf t}) = (u({\bf t}),\,v({\bf t})^{T}$ and
$Z_{0} = (u_{0},\,v_{0})^{T},\quad Z_{1} = (u_{1},\,v_{1})^{T}.$

\medskip

We define 

\begin{equation}
\label{301bis}
\mathcal{H}_{0}  = 
H_{0}^1(\Omega) \times H_{0}^{1}(\Omega) \times L^{2}(\Omega) \times L^{2}(\Omega)   
\end{equation}
equipped with the inner product given by
\begin{align}
\langle {\mathcal U},\,\tilde{{\mathcal U}}\rangle_{{\mathcal H}_{0}} = &\ \rho_{1}\langle U,\,\widetilde{U} \rangle_{L^{2}(\Omega)} + \rho_{2}\langle V,\,\widetilde{V} \rangle_{L^{2}(\Omega)} + a_{11}\langle \nabla u,\,\nabla\widetilde{u}\rangle_{L^{2}(\Omega)} + a_{22}\langle \nabla v,\,\nabla\widetilde{v}\rangle_{L^{2}(\Omega)} \nonumber \\
& + a_{12}\left[\langle \nabla u,\,\nabla\widetilde{v}\rangle_{L^{2}(\Omega)} + \langle \nabla v,\,\nabla\widetilde{u}\rangle_{L^{2}(\Omega)}  \right] + \alpha\langle u - v,\,\widetilde{u} - \widetilde{v}\rangle_{L^{2}(\Omega)}\nonumber \\
& + \mathfrak{C}
\langle\varphi_{1},\,\widetilde{\varphi}_{1}\rangle_{L^{2}\left(\mathbb{R}; L^{2}(\Omega)\right)}, 
\label{302bis}
\end{align}
where ${\mathcal U}=(u,\,v,\,U,\,V)^{T}$ and $\ \tilde{{\mathcal U}}=(\tilde{u},\,\tilde{v},\,\tilde{U},\,\tilde{V})^{T}.$

\medskip

Then, the operator 
$\,\mathcal{A}_{0}:\mathcal{D}(\mathcal{A}_{0})\subset
\mathcal{H}_{0}\rightarrow \mathcal{H}_{0}$ given by
\begin{equation}
\label{304bis}\mathcal{A}_{0} \left(
\begin{array}{c}
u \\
\\
v\\
\\
U \\
\\
V
\end{array}
\right) =\left(
\begin{array}{c}
U \\
\\
V \\
\\
\displaystyle \frac{1}{\rho_{1}}\left[a_{11}\Delta u + a_{12}\Delta v - \alpha(u - v) \right] - U\\
\displaystyle  \frac{1}{\rho_{2}}\left[a_{12}\Delta u + a_{22}\Delta v + \alpha(u - v) \right] - V
\end{array}
\right).
\end{equation}
with the domain
\begin{align*}
\mathcal{D}({\mathcal A}_{0}) =  \left\{\mathbb{U} = (u,\,v,\,U,\,V)^{T}\in {\mathcal H}_0:\ U,\,V\in H_{0}^1(\Omega),\ (u,v) \in \mathcal{D}(A)\right\},
\end{align*}
 generates a C$_{0}$-semigroup of contractions in ${\mathcal H}_{0}, (e^{t\mathcal{A}_0})_{t \geq 0}.$ Moreover, the following auxiliary problem:

\begin{align}
\label{104bis}(u,\,v)({\pmb{\color{black}{x}}},\,0) = (u_{0}({\pmb{\color{black}{x}}}),\,v_{0}({\pmb{\color{black}{x}}})),\quad (u_{t},\,v_{t})({\pmb{\color{black}{x}}},\,0) = (u_{1}({\pmb{\color{black}{x}}}),\,v_{1}({\pmb{\color{black}{x}}})).
\end{align}
For the sake of simplicity, we consider a homogeneous material, so that all parameters $\rho,\,j$ are constants equal to $1.$ Following the idea given by \cite{kais} we study the asymptotic behavior for the system
\begin{eqnarray}
\left\lbrace
\label{105bis}
\begin{array}{l}
\rho_{1}u_{tt} - a_{11}\Delta u - a_{12}\Delta v + \alpha(u - v) + u_t = 0, \quad ({\pmb{\color{black}{x}}},\,t)\in \Omega\times (0,\,\infty),  \\
\rho_{2}v_{tt} - a_{12}\Delta u - a_{22}\Delta v - \alpha(u - v)  + v_t  = 0, \quad ({\pmb{\color{black}{x}}},\,t)\in \Omega\times (0,\,\infty),\\
u({\pmb{\color{black}{x}}},\,0) = u_{0}({\pmb{\color{black}{x}}}),\quad u_{t}({\pmb{\color{black}{x}}},\,0) = u_{1}({\pmb{\color{black}{x}}}),\quad  {\pmb{\color{black}{x}}}\in \Omega, \\
v({\pmb{\color{black}{x}}},\,0) = v_{0}({\pmb{\color{black}{x}}}),\quad v_{t}({\pmb{\color{black}{x}}},\,0) = v_{1}({\pmb{\color{black}{x}}}),\quad  {\pmb{\color{black}{x}}}\in \Omega,  \\
u({\pmb{\color{black}{x}}},\,t) = v({\pmb{\color{black}{x}}},\,t) = 0,\quad \forall\,({\pmb{\color{black}{x}}},\,t)\in\partial\Omega\times (0,\,+\infty),
\end{array}
\right. 
\end{eqnarray}
 admits a unique solution $(u(x,t),\,v(x,t))$ such that if $(u_{0},\,v_{0},\,u_{1},\,v_{1})\in {\mathcal D}({\mathcal A}_{0})$, then the solution $(u(x,\,{\bf t}),\,v(x,\,{\bf t})$ of \eqref{105bis} verifies the following regularity property:
\begin{align*}
{\mathcal U}=(u,\,v,\,u_{t},\,v_{t})\in C([0,\,+\infty),\,{\mathcal D}({\mathcal A}_{0}))\cap C^{1}([0,\,+\infty),\,{\mathcal H}_{0}),
\end{align*}
and when $(u_{0},\,v_{0},\,u_{1},\,v_{1})\in {\mathcal H}_{0}$, then 
\begin{align*}
{\mathcal U}=(u,v,u_{t},v_{t})\in C([0,\,+\infty),\,{\mathcal H}_{0}).
\end{align*}

So, according to \cite{kais,kaisI}, we have the following proposition: 
\begin{proposition}
\label{lu}
The operator $\mathcal{A}$ is the infinitesimal generator of a contraction semigroup $\{{\mathcal S}(t)\}_{t\geq 0}.$
\end{proposition}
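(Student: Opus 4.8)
The plan is to invoke the Lumer--Phillips theorem. Since $\mathcal{H}$ is a Hilbert space and $\mathcal{D}(\mathcal{A})$ is easily seen to be dense in $\mathcal{H}$, it suffices to show that $\mathcal{A}$ is dissipative and that $I-\mathcal{A}$ is surjective. For dissipativity I would compute $\mathrm{Re}\,\langle \mathcal{A}\mathbb{U},\mathbb{U}\rangle_{\mathcal{H}}$ for $\mathbb{U}=(u,v,U,V,\varphi_1,\varphi_2)^T\in\mathcal{D}(\mathcal{A})$, substituting \eqref{304} into the inner product of $\mathcal{H}$ and integrating by parts in the second-order terms (legitimate because $U,V\in H_0^1(\Omega)$). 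The conservative contributions---the elastic terms weighted by the $a_{ij}$ and the coupling terms weighted by $\alpha$---pair with their complex conjugates and cancel upon taking real parts. The two cross terms, namely $\mathfrak{C}\int_{\mathbb{R}}\mu(\xi)\varphi_i\,d\xi$ tested against $U,V$ and $\mu(\xi)U,\mu(\xi)V$ tested against $\varphi_i$, are complex conjugates of one another (since $\mu$ and $\mathfrak{C}$ are real), so their combined real part also vanishes. What survives is
\begin{equation*}
\mathrm{Re}\,\langle \mathcal{A}\mathbb{U},\mathbb{U}\rangle_{\mathcal{H}} = -\mathfrak{C}\int_{\mathbb{R}}(\xi^2+\eta)\left(\|\varphi_1(\cdot,\xi)\|_{L^2(\Omega)}^2+\|\varphi_2(\cdot,\xi)\|_{L^2(\Omega)}^2\right)d\xi\le 0,
\end{equation*}
because $\mathfrak{C}=\pi^{-1}\sin(\alpha\pi)>0$ and $\xi^2+\eta\ge 0$. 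Hence $\mathcal{A}$ is dissipative.

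For surjectivity, given $\mathbb{F}=(f_1,\dots,f_6)^T\in\mathcal{H}$ I would solve $(I-\mathcal{A})\mathbb{U}=\mathbb{F}$. The first, second, fifth and sixth components give explicitly $U=u-f_1$, $V=v-f_2$ and
\begin{equation*}
\varphi_1=\frac{f_5+\mu(\xi)(u-f_1)}{1+\xi^2+\eta},\qquad \varphi_2=\frac{f_6+\mu(\xi)(v-f_2)}{1+\xi^2+\eta}.
\end{equation*}
Inserting these into the two second-order equations and using $\mu(\xi)^2=|\xi|^{2\alpha-1}$, the nonlocal terms reduce to $\mathfrak{C}\,C(\alpha,\eta)\,(u-f_1)$ (respectively $\mathfrak{C}\,C(\alpha,\eta)\,(v-f_2)$) plus explicit sources. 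Here Lemma \ref{L2.3} is precisely what is needed: it guarantees $C(\alpha,\eta)<\infty$, so the new zeroth-order coefficients are finite, and, through the Cauchy--Schwarz inequality together with the finiteness of $D(\alpha,\eta)$, it ensures that the sources $\int_{\mathbb{R}}\mu(\xi)f_5(1+\xi^2+\eta)^{-1}d\xi$ belong to $L^2(\Omega)$. The problem thus becomes a coupled elliptic system for $(u,v)\in[H_0^1(\Omega)]^2$ of the form $-a_{11}\Delta u-a_{12}\Delta v+c_1u-\alpha v=g_1$ and $-a_{12}\Delta u-a_{22}\Delta v-\alpha u+c_2v=g_2$, with $c_i=\rho_i+\alpha+\mathfrak{C}C(\alpha,\eta)>0$ and $g_1,g_2\in L^2(\Omega)$.

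I would then solve this elliptic system by Lax--Milgram. The associated bilinear form on $[H_0^1(\Omega)]^2$ is bounded, and it is coercive: its principal part equals $\int_\Omega X^TAX\ge c\,(\|\nabla u\|^2+\|\nabla v\|^2)$ by the positive-definiteness \eqref{102} of $A$, while its zeroth-order part has the matrix $\left(\begin{smallmatrix} c_1 & -\alpha\\ -\alpha & c_2\end{smallmatrix}\right)$, which is positive definite since $c_1,c_2>\alpha$ forces $c_1c_2-\alpha^2>0$. Elliptic regularity then yields $a_{11}u+a_{12}v,\ a_{12}u+a_{22}v\in H^2(\Omega)\cap H_0^1(\Omega)$, and back-substitution produces $U,V\in H_0^1(\Omega)$ together with $\varphi_1,\varphi_2$ meeting the remaining membership conditions: $\xi\varphi_i\in L^2(\mathbb{R};L^2(\Omega))$ (because $|\xi|^{2\alpha+1}(1+\xi^2+\eta)^{-2}$ is integrable for $0<\alpha<1$) and $-(\xi^2+\eta)\varphi_i+\mu(\xi)\,(\text{velocity})=\varphi_i-f_{4+i}\in L^2(\mathbb{R};L^2(\Omega))$. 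Thus $\mathbb{U}\in\mathcal{D}(\mathcal{A})$ and $I-\mathcal{A}$ is surjective.

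I expect the main obstacle to be this surjectivity step, specifically ensuring that the nonlocal integral terms yield finite, $L^2(\Omega)$-valued quantities---exactly the content of Lemma \ref{L2.3}---and then verifying the coercivity of the reduced elliptic system and the precise regularity required to land back in $\mathcal{D}(\mathcal{A})$, including the weighted conditions on $\varphi_1,\varphi_2$. The dissipativity computation and the density of $\mathcal{D}(\mathcal{A})$ in $\mathcal{H}$ are routine. Combining dissipativity with the surjectivity of $I-\mathcal{A}$, the Lumer--Phillips theorem shows that $\mathcal{A}$ is the infinitesimal generator of a $C_0$-semigroup of contractions $\{\mathcal{S}(t)\}_{t\ge 0}$.
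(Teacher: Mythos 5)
Your proof is correct, but it takes a genuinely more self-contained route than the paper, which in fact never writes out a proof of Proposition \ref{lu} at all: the paper recasts \eqref{209} as the abstract second-order equation \eqref{abstract}, namely $Z_{tt}+AZ+BB^{*}\partial_{t}^{\alpha,\eta}Z=0$ with $A$ the self-adjoint, strictly positive operator \eqref{304b} and $B=B^{*}=I_{H}$, and then invokes the general well-posedness theory of \cite{kais,kaisI} for that class of fractionally damped systems. Your argument---Lumer--Phillips on the augmented space $\mathcal{H}$---is sound in all its steps: the cancellation of the conservative and cross terms leaving
$\mathrm{Re}\,\langle\mathcal{A}\mathbb{U},\mathbb{U}\rangle_{\mathcal H}=-\mathfrak{C}\int_{\mathbb{R}}(\xi^{2}+\eta)\left(\|\varphi_{1}(\cdot,\xi)\|_{L^{2}(\Omega)}^{2}+\|\varphi_{2}(\cdot,\xi)\|_{L^{2}(\Omega)}^{2}\right)d\xi$
reproduces exactly the paper's dissipation identity \eqref{314}; the elimination of $\varphi_{1},\varphi_{2}$ and the reduction to a coercive elliptic system with zeroth-order coefficients $c_{i}=\rho_{i}+\alpha+\mathfrak{C}C(\alpha,\eta)$ uses Lemma \ref{L2.3} precisely in the role the paper announces for it (``Lemma \ref{L2.3} will be used for well-posedness''), with $C(\alpha,\eta)$ making the coefficients finite and $D(\alpha,\eta)$ putting the sources in $L^{2}(\Omega)$; and your verification of the weighted conditions $\xi\varphi_{i}\in L^{2}(\mathbb{R};L^{2}(\Omega))$ and $-(\xi^{2}+\eta)\varphi_{i}+\mu(\xi)U\in L^{2}(\mathbb{R};L^{2}(\Omega))$ is exactly what is needed to land back in $\mathcal{D}(\mathcal{A})$. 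The trade-off between the two routes: the paper's citation to the abstract framework is short and inherits generality (the same theorem covers a whole class of operators $A$ and feedbacks $B$), whereas your direct proof makes the result independent of external references, checks the concrete domain conditions of \eqref{304}, and yields \eqref{314} as a by-product; it is also the natural companion to the paper's own resolvent analysis in Proposition \ref{P4.3}, which uses the same Lax--Milgram strategy. One small refinement: rather than asserting that $\mathcal{D}(\mathcal{A})$ is ``easily seen to be dense,'' you can sidestep the point, since a dissipative operator on a reflexive Banach space whose range condition $\mathrm{Ran}(I-\mathcal{A})=\mathcal{H}$ holds is automatically densely defined, so Lumer--Phillips applies without a separate density check.
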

 
From the Proposition \ref{lu}, the system \eqref{209} is well-posed in the energy space ${\mathcal H}$ and we have the following theorem:
\begin{align}
\label{312} \mathbb{U}=(u,\,v,\,U,\,V,\,\varphi_{1},\,\varphi_{2})^{T}\quad
{\rm and} \quad
\mathbb{U}_{0}=(u_{0},\,v_{0},\,u_{1},\,v_{1},\,0,\,0)^{T},
\end{align}
\begin{theorem}(Existence and uniqueness of solutions)
\label{exis}
If $(u_{0},\,v_{0},\,u_{1},\,v_{1},\,0,\,0)\in {\mathcal H},$ 
the problem \eqref{209} admits a unique solution
\begin{equation*}
(u,\,v,\,u_{t},\,v_{t},\,\varphi_{1},\,\varphi_{2}) \in C\left([0,\,+\infty); {\mathcal H}\right),
\end{equation*}
and for $(u_{0},\,v_{0},\,u_{1},\,v_{1},\,0,\,0)\in {\mathcal D}({\mathcal A})$, the problem \eqref{209} admits a unique solution
\begin{equation*}
(u,\,v,\,u_{t},\,v_{t},\,\varphi_{1},\,\varphi_{2})\in C\left([0,\,+\infty); {\mathcal D}({\mathcal A})\right)\cap C^{1}\left([0,\,+\infty); {\mathcal H}\right).
\end{equation*}
Moreover, the energy in time $t\geq 0$ is given by
\begin{align}
E(t) = &\ \frac{1}{2}\left[\rho_{1}\int_{\Omega}|u_{t}|^{2}d{\pmb{\color{black}{x}}} + \rho_{2}\int_{\Omega}|v_{t}|^{2}d{\pmb{\color{black}{x}}} + \int_{\Omega}X^{T}AXd{\pmb{\color{black}{x}}} + \alpha\int_{\Omega}|u - v|^{2}d{\pmb{\color{black}{x}}} \right. \nonumber \\
\label{313}& \left.\qquad\quad +\ \mathfrak{C}\|\varphi_{1}(t)\|_{L^{2}(\mathbb{R}; L^{2}(\Omega))}^{2} + \mathfrak{C}\|\varphi_{2}(t)\|_{L^{2}(\mathbb{R}; L^{2}(\Omega))}^{2}\right].
\end{align}
Furthermore, satisfies
\begin{align}
\label{314}\frac{d}{dt}E(t) = &- \mathfrak{C}\int_{\mathbb{R}}(\xi^{2} + \eta)\|\varphi_{1}(t,\,\xi)\|_{L^{2}(\Omega)}^{2}d\xi - \mathfrak{C}\int_{\mathbb{R}}(\xi^{2} + \eta)\|\varphi_{2}(t,\,\xi)\|_{L^{2}(\Omega)}^{2}d\xi. 
\end{align}
\end{theorem}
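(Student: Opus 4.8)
The plan is to derive everything from Proposition \ref{lu}, which already asserts that $\mathcal{A}$ generates a $C_0$-semigroup of contractions $\{\mathcal{S}(t)\}_{t\geq 0}$ on $\mathcal{H}$. Granting this, the existence and uniqueness statements are an immediate application of the standard semigroup theory of abstract Cauchy problems (Hille--Yosida / Lumer--Phillips). For $\mathbb{U}_0 \in \mathcal{H}$ the function $\mathbb{U}(t) = \mathcal{S}(t)\mathbb{U}_0$ is the unique mild solution of \eqref{303} and belongs to $C([0,+\infty);\mathcal{H})$; for $\mathbb{U}_0 \in \mathcal{D}(\mathcal{A})$ the orbit stays in the domain, so that $\mathbb{U}\in C([0,+\infty);\mathcal{D}(\mathcal{A}))\cap C^1([0,+\infty);\mathcal{H})$ is the unique classical solution. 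The well-definedness of the coupling integral $\int_{\mathbb{R}}\mu(\xi)\varphi_i\,d\xi$ as an element of $L^{2}(\Omega)$, needed for $\mathcal{A}$ to map $\mathcal{D}(\mathcal{A})$ into $\mathcal{H}$, is exactly what Lemmas \ref{L2.3}--\ref{L2.4} guarantee.

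The energy identity \eqref{313} is nothing but $E(t) = \tfrac12\|\mathbb{U}(t)\|_{\mathcal{H}}^2$, read off directly from the definition of the norm on $\mathcal{H}$, using $u_t = U$ and $v_t = V$. It therefore suffices to establish the dissipation law \eqref{314}. For a classical solution, $t\mapsto \mathbb{U}(t)$ is $C^1$ into $\mathcal{H}$, and since the functions are real-valued I would differentiate and insert the equation \eqref{303}:
\begin{align*}
\frac{d}{dt}E(t) = \langle \mathbb{U}_t(t),\,\mathbb{U}(t)\rangle_{\mathcal{H}} = \langle \mathcal{A}\mathbb{U}(t),\,\mathbb{U}(t)\rangle_{\mathcal{H}}.
\end{align*}

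The heart of the argument is then to compute $\langle \mathcal{A}\mathbb{U},\,\mathbb{U}\rangle_{\mathcal{H}}$ term by term from the explicit form \eqref{304} of $\mathcal{A}$ and the inner product on $\mathcal{H}$. Substituting the six components of $\mathcal{A}\mathbb{U}$, the $\rho_1,\rho_2$ slots produce $\langle a_{11}\Delta u + a_{12}\Delta v - \alpha(u-v) - \mathfrak{C}\int_{\mathbb{R}}\mu\varphi_1\,d\xi,\,U\rangle$ and its analogue with $V$; integrating by parts (legitimate since $U,V\in H_0^1(\Omega)$ and $a_{11}u+a_{12}v,\ a_{12}u+a_{22}v\in H^2(\Omega)$ on $\mathcal{D}(\mathcal{A})$, with no boundary term thanks to the Dirichlet condition) converts the Laplacian terms into gradient pairings. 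By the symmetry of $A$ these cancel exactly against the elastic terms $a_{11}\langle\nabla U,\nabla u\rangle + a_{22}\langle\nabla V,\nabla v\rangle + a_{12}[\langle\nabla U,\nabla v\rangle+\langle\nabla V,\nabla u\rangle]$ coming from the velocity slots of the inner product. Likewise the two $\alpha$-coupling contributions cancel against $\alpha\langle U-V,u-v\rangle$. The only genuinely new contributions are the two $\varphi$-blocks: the cross terms $-\mathfrak{C}\langle\int_{\mathbb{R}}\mu\varphi_i\,d\xi,\,U\rangle$ and $\mathfrak{C}\langle\mu U,\varphi_i\rangle_{L^{2}(\mathbb{R};L^{2}(\Omega))}$ cancel by Fubini, leaving only $-\mathfrak{C}\langle(\xi^{2}+\eta)\varphi_i,\varphi_i\rangle_{L^{2}(\mathbb{R};L^{2}(\Omega))}$, which is precisely the right-hand side of \eqref{314}. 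I would close by extending \eqref{313}--\eqref{314} from $\mathcal{D}(\mathcal{A})$ to all of $\mathcal{H}$ by density and continuity of $\mathcal{S}(t)$.

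The main obstacle is the careful bookkeeping in the computation of $\langle\mathcal{A}\mathbb{U},\mathbb{U}\rangle_{\mathcal{H}}$: one must verify that all conservative terms (elastic, $\alpha$-coupling, and the velocity--$\varphi$ cross terms) cancel in pairs and that the integration by parts carries no boundary contribution. The accompanying analytic subtlety, already settled by Lemmas \ref{L2.3} and \ref{L2.4}, is that the memory integral $\int_{\mathbb{R}}\mu(\xi)\varphi_i\,d\xi$ makes sense in $L^{2}(\Omega)$, so that every pairing above is finite and the application of Fubini's theorem is justified.
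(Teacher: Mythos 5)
Your proposal is correct and follows essentially the same route as the paper: the paper derives Theorem \ref{exis} directly from Proposition \ref{lu} (generation of a contraction semigroup, itself imported from \cite{kais,kaisI}) via standard semigroup theory, and the dissipativity computation you carry out term by term is exactly the identity $\mathrm{Re}\,\langle \mathcal{A}\mathbb{U},\,\mathbb{U}\rangle_{\mathcal{H}} = -\mathfrak{C}\int_{\mathbb{R}}(\xi^{2}+\eta)\|\varphi_{1}\|_{L^{2}(\Omega)}^{2}\,d\xi - \mathfrak{C}\int_{\mathbb{R}}(\xi^{2}+\eta)\|\varphi_{2}\|_{L^{2}(\Omega)}^{2}\,d\xi$ that the paper uses (see the proof of Proposition \ref{P4.1}). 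The only small refinement worth making is that for data merely in $\mathcal{H}$ the identity \eqref{314} should be stated in integrated form, since mild solutions need not be differentiable in time.
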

 
\section{Strong stability} \label{sec4}
\label{ststab}
In this section, we prove that the solutions of system \eqref{303} converge asymptotically to zero. The following theorem plays an important role.
\begin{theorem}(Arendt-Batty \cite{Arendt})
\label{Arendt}
	Let ${\mathcal A}$ be the generator of a C$_{0}$-semigroup $({\mathcal S}(t))_{t\ge 0}$ in a reflexive Banach space $X.$ If the following conditions are satisfied:
	\begin{enumerate}
		\item [(i)] ${\mathcal A}$ has no purely imaginary eigenvalues,
		\item [(ii)] $\sigma({\mathcal A})\cap i\mathbb{R}$ is countable,
	\end{enumerate}
	then, $\{{\mathcal S}(t)\}_{t\ge 0}$ is strongly stable.
\end{theorem}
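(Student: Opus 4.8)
The plan is to reduce the strong stability of $\{{\mathcal S}(t)\}_{t\ge 0}$ to the decay of individual orbits $t\mapsto{\mathcal S}(t)x$ and to extract that decay from the spectral hypotheses (i)--(ii). Since ${\mathcal A}$ generates a contraction (in particular bounded) semigroup, every orbit is bounded, and because $X$ is reflexive each orbit $\{{\mathcal S}(t)x:t\ge 0\}$ is relatively weakly compact. This weak compactness is what makes the reflexive setting tractable: it gives access to weak limit points of orbits, to the mean ergodic theorem for the averages $\frac1T\int_0^T e^{-i\beta s}{\mathcal S}(s)x\,ds$, and to the de Leeuw--Glicksberg splitting of $X$ into a reversible part spanned by imaginary eigenvectors of ${\mathcal A}$ and a complementary space of flight vectors.

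First I would record the resolvent representation of the Laplace transform of an orbit: for $\mathrm{Re}\,\lambda>0$, $R(\lambda,{\mathcal A})x=\int_0^{\infty}e^{-\lambda t}{\mathcal S}(t)x\,dt$. Hypothesis (ii), that $\sigma({\mathcal A})\cap i\mathbb{R}$ is countable, guarantees that $\lambda\mapsto R(\lambda,{\mathcal A})x$ extends holomorphically across the imaginary axis except on the countable closed set $iE$, where $E=\{\beta\in\mathbb{R}:i\beta\in\sigma({\mathcal A})\}$. At each exceptional frequency $i\beta\in iE$ I would invoke hypothesis (i): by the mean ergodic theorem the average $\frac1T\int_0^T e^{-i\beta s}{\mathcal S}(s)x\,ds$ converges, and its limit either vanishes or is an eigenvector of ${\mathcal A}$ for the eigenvalue $i\beta$; since (i) forbids such eigenvectors, the limit is $0$. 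Thus the reversible part is trivial, every $x$ is a flight vector, and one obtains at least the weak convergence ${\mathcal S}(t)x\rightharpoonup 0$.

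The decisive step, and the one I expect to be the main obstacle, is to upgrade this spectral and ergodic information to genuine norm decay $\|{\mathcal S}(t)x\|\to 0$. This is the content of the Arendt--Batty--Lyubich--Vu Tauberian theorem: a bounded, uniformly continuous $X$-valued function whose Laplace transform extends holomorphically past $i\mathbb{R}$ outside a countable closed set, and whose local means vanish at each exceptional point, tends to $0$. I would establish it by transfinite induction on the Cantor--Bendixson derivatives of $iE$, the base case being an Ingham--Karamata type Tauberian theorem for a function with holomorphic boundary extension, and the inductive step peeling off the isolated points of each derived set using the vanishing of the means supplied above. The countability of $iE$ is exactly what forces this induction to terminate, so the two hypotheses enter in a complementary and essential way: (i) annihilates the contribution of each boundary frequency, while (ii) keeps the boundary spectrum small enough for the induction to close.
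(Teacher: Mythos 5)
The paper does not prove this theorem at all: it is quoted as a known result of Arendt and Batty \cite{Arendt} and used as a black box in the proof of Theorem \ref{T4.5}. So there is no in-paper proof to compare against; what can be assessed is whether your sketch matches the proof in the cited literature --- and in its architecture it does. Your plan (reflexivity gives relatively weakly compact orbits; the mean ergodic theorem plus hypothesis (i) forces the Ces\`aro/Abel means of $e^{-i\beta s}{\mathcal S}(s)x$ to vanish at every boundary frequency $i\beta\in\sigma({\mathcal A})\cap i\mathbb{R}$; a Tauberian theorem proved by transfinite induction on the Cantor--Bendixson derivatives of the countable closed set $\sigma({\mathcal A})\cap i\mathbb{R}$ then upgrades this to norm decay) is precisely the Arendt--Batty strategy, with the de Leeuw--Glicksberg splitting being the Lyubich--Vu variant of the same circle of ideas.

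Two caveats are worth recording. First, as stated in the paper (and in the statement you were given) the theorem is false: boundedness of the semigroup is indispensable. Take ${\mathcal A}=I$ on a Hilbert space: there are no purely imaginary eigenvalues and $\sigma({\mathcal A})\cap i\mathbb{R}=\emptyset$ is countable, yet $\|{\mathcal S}(t)x\|=e^{t}\|x\|\to\infty$. You silently repair this by assuming the semigroup is a contraction; that is legitimate in the paper's application (Proposition \ref{lu} produces a contraction semigroup), but it is an additional hypothesis that your argument genuinely uses --- bounded orbits, weak compactness of orbits, and the mean ergodic theorem all fail without it --- so it should be stated explicitly rather than smuggled in. Second, the assertion that triviality of the reversible part already yields ${\mathcal S}(t)x\rightharpoonup 0$ is an overstatement: a flight vector only has $0$ as a weak cluster point of its orbit along some net, not weak convergence of the whole orbit; since nothing downstream uses this claim, the slip is harmless, but it should be removed. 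Finally, be aware that the decisive Tauberian theorem is only named, not proved, in your text: the transfinite induction with the Ingham--Karamata base case is the actual content of the Arendt--Batty paper, so what you have is a correct roadmap to the literature's proof rather than a self-contained argument --- which, for a result the paper itself imports by citation, is a reasonable standard to meet.
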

\begin{proposition}
\label{P4.1}
If $\lambda \in \mathbb{R},$ then $i\lambda I - {\mathcal A}$ is injective.
\end{proposition}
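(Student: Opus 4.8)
The goal is to prove $\ker(i\lambda I-\mathcal A)=\{0\}$. Take $\mathbb U=(u,v,U,V,\varphi_1,\varphi_2)^T\in\mathcal D(\mathcal A)$ with $\mathcal A\mathbb U=i\lambda\mathbb U$. Componentwise, \eqref{304} gives $i\lambda u=U$, $i\lambda v=V$, two second-order relations defining $i\lambda U$ and $i\lambda V$, and the transport relations $(i\lambda+\xi^2+\eta)\varphi_j=\mu(\xi)W_j$ with $W_1=U$, $W_2=V$. The first step is to exploit dissipativity: for every element of $\mathcal D(\mathcal A)$ one has, as in \eqref{314}, $\mathrm{Re}\,\langle\mathcal A\mathbb U,\mathbb U\rangle_{\mathcal H}=-\mathfrak C\int_{\mathbb R}(\xi^2+\eta)\big(\|\varphi_1(\xi)\|_{L^2(\Omega)}^2+\|\varphi_2(\xi)\|_{L^2(\Omega)}^2\big)\,d\xi$. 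Since $\mathcal A\mathbb U=i\lambda\mathbb U$ yields $\mathrm{Re}\,\langle\mathcal A\mathbb U,\mathbb U\rangle_{\mathcal H}=\mathrm{Re}\big(i\lambda\|\mathbb U\|_{\mathcal H}^2\big)=0$, I conclude $\int_{\mathbb R}(\xi^2+\eta)\|\varphi_j(\xi)\|_{L^2(\Omega)}^2\,d\xi=0$ for $j=1,2$.

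From these vanishing integrals I read off $\varphi_1=\varphi_2=0$ in $L^2(\mathbb R;L^2(\Omega))$. Indeed, if $\eta>0$ the weight satisfies $\xi^2+\eta\ge\eta>0$, and if $\eta=0$ then $\xi^2>0$ for a.e.\ $\xi$, so in either case $\varphi_j(\xi)=0$ for a.e.\ $\xi$. Feeding $\varphi_j=0$ into the transport relations gives $\mu(\xi)U=0$ and $\mu(\xi)V=0$ for a.e.\ $\xi$; as $\mu(\xi)=|\xi|^{(2\alpha-1)/2}\ne0$ whenever $\xi\ne0$, this forces $U=V=0$ in $L^2(\Omega)$. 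When $\lambda\ne0$ the relations $i\lambda u=U=0$ and $i\lambda v=V=0$ yield $u=v=0$ at once, hence $\mathbb U=0$.

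The remaining case is $\lambda=0$, where $U=V=0$ says nothing about $u,v$ and the two second-order relations collapse to the elliptic system $a_{11}\Delta u+a_{12}\Delta v-\alpha(u-v)=0$ and $a_{12}\Delta u+a_{22}\Delta v+\alpha(u-v)=0$ with $u,v\in H_0^1(\Omega)$, i.e.\ $(u,v)^T\in\ker A$. I would pair the first relation with $\bar u$ and the second with $\bar v$, integrate over $\Omega$, integrate by parts using the Dirichlet condition \eqref{103}, and add the two identities. The cross terms combine into $-\alpha\|u-v\|_{L^2(\Omega)}^2$ and the principal terms into $-\int_\Omega X^{T}AX\,d{\pmb{\color{black}{x}}}$, so that $\int_\Omega X^{T}AX\,d{\pmb{\color{black}{x}}}+\alpha\|u-v\|_{L^2(\Omega)}^2=0$. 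Both summands are nonnegative because $A$ is positive definite, hence both vanish; positive definiteness of $A$ then forces $\nabla u=\nabla v=0$, and the boundary condition gives $u=v=0$. Therefore $\mathbb U=0$ in every case and $i\lambda I-\mathcal A$ is injective.

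The only steps requiring care are the two deductions of vanishing: the passage from $\int_{\mathbb R}(\xi^2+\eta)\|\varphi_j(\xi)\|_{L^2(\Omega)}^2\,d\xi=0$ to $\varphi_j\equiv0$ (which is precisely where the hypothesis naturally splits into $\eta>0$ and $\eta=0$), and the observation that $\mu(\xi)U=0$ a.e.\ forces $U=0$ since $\mu$ vanishes only at the single point $\xi=0$. The subcase $\lambda=0$ is the one place where dissipation alone is insufficient and the coercivity of $A$ must be invoked; this is the main, though routine, obstacle.
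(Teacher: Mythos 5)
Your proof is correct and follows essentially the same route as the paper: dissipativity of $\mathcal A$ forces $\varphi_1=\varphi_2=0$, the transport relations then give $U=V=0$, and the cases $\lambda\neq 0$ and $\lambda=0$ are treated separately. The only difference is that where the paper disposes of the $\lambda=0$ elliptic system by invoking an unspecified ``operator method,'' you carry out the standard energy argument explicitly (pairing with $\bar u$, $\bar v$, integrating by parts, and using the positive definiteness of $A$ together with the Dirichlet condition), which is, if anything, more complete than the paper's own treatment.
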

\begin{proof} 
Let $\lambda\in \mathbb{R}$ Such that $i\lambda$ is an eigenvalue of the operator ${\mathcal A},$ and let $\mathbb{U} = (u,\,v,\,U,\,V,\,\varphi_{1},\,\varphi_{2})^{T}\in \mathcal{D}({\mathcal A})$ be the associated eigenvector. Then ${\mathcal A}\mathbb{U} = i\lambda \mathbb{U}.$ Equivalently 	
\begin{align}
\label{401}
\begin{cases}
i\lambda u - U = 0  \iff U = i\lambda u  \\
i\lambda v - V = 0 \iff V = i\lambda v \\
\displaystyle  a_{11}\Delta u + a_{12}\Delta v - \alpha(u - v) + \mathfrak{C}\displaystyle\int_{\mathbb{R}}\mu(\xi)\varphi_{1}(\xi)d\xi =  i\lambda\rho_{1}U,  \\
\displaystyle a_{12}\Delta u + a_{22}\Delta v + \alpha(u - v) + \mathfrak{C}\displaystyle\int_{\mathbb{R}}\mu(\xi)\varphi_{2}(\xi)d\xi =  i\lambda\rho_{2}V,   \\
(\xi^{2} + \eta + i\lambda)\varphi_{1}(\xi) = \mu(\xi)U,  \quad \forall \,\xi\in \mathbb{R}, \\
(\xi^{2} + \eta + i\lambda)\varphi_{2}(\xi) = \mu(\xi)V,  \quad \forall \,\xi\in \mathbb{R}.
\end{cases}
\end{align}
Note that
\begin{align*}
0 = &\ Re\left\langle\mathcal{A}\mathbb{U},\,\mathbb{U}\right\rangle_{\mathcal H} = 
- \mathfrak{C}\int_{\mathbb{R}}(\xi^{2} +
\eta)\|\varphi_{1}(\xi)\|_{L^{2}(\Omega)}^{2}d\xi - \mathfrak{C}\int_{\mathbb{R}}(\xi^{2} +
\eta)\|\varphi_{2}(\xi)\|_{L^{2}(\Omega)}^{2}d\xi. 
\end{align*}
Therefore
\begin{eqnarray}
\label{402}
\begin{cases}
\varphi_{1}({\pmb{\color{black}{x}}},\,\xi) = 0\quad \mbox{a.\ e.}\quad {\rm in}\quad ({\pmb{\color{black}{x}}},\,\xi) \in \Omega\times\mathbb{R}, \\
\varphi_{2}({\pmb{\color{black}{x}}},\,\xi) = 0\quad \mbox{a.\ e.}\quad {\rm in}\quad ({\pmb{\color{black}{x}}},\,\xi) \in \Omega\times\mathbb{R}.
\end{cases}
\end{eqnarray}  
Applying \eqref{402}$_{1,\,2}$ to \eqref{401}$_{5,\,6}$ respectively, we obtain:
\begin{eqnarray}
\label{403}
\begin{cases}
U({\pmb{\color{black}{x}}}) = 0 \quad  \mbox{a.\ e.}\quad {\rm in}\quad {\pmb{\color{black}{x}}}\in \Omega, \\
V({\pmb{\color{black}{x}}}) = 0 \quad  \mbox{a.\ e.}\quad {\rm in}\quad {\pmb{\color{black}{x}}}\in\Omega. \\
\end{cases}
\end{eqnarray}
Now, applying \eqref{403}$_{1,\,2}$ to \eqref{401}$_{1,\,2}$ respectively, we have
\begin{eqnarray}
\label{404}
\begin{cases}
i\lambda u({\pmb{\color{black}{x}}}) = 0 \quad  \mbox{a.\ e.}\quad {\rm in}\quad {\pmb{\color{black}{x}}}\in \Omega, \\
i\lambda v({\pmb{\color{black}{x}}}) = 0 \quad  \mbox{a.\ e.}\quad {\rm in}\quad {\pmb{\color{black}{x}}}\in \Omega. 
\end{cases}
\end{eqnarray}
If $\lambda\neq 0$, then  
\begin{eqnarray*}
\begin{cases}
u = 0  \quad  \mbox{a.\ e.\ \ on}\quad  \Omega, \\
v = 0 \quad  \mbox{a.\ e.\ \ on}   \quad \Omega. 
\end{cases}
\end{eqnarray*}
Thus, from \eqref{401}$_{3,\,4}$ and \eqref{402}$_{1,\,2}$ it follows that 
\begin{align*}
\begin{cases}
\displaystyle a_{11}\Delta u + a_{12}\Delta v - \alpha(u - v) =  0,  \\
\displaystyle a_{12}\Delta u + a_{22}\Delta v + \alpha(u - v) =  0,  
\end{cases}
\end{align*}
almost everywhere on $\Omega.$
From the boundary conditions, it follows that $u\equiv 0,$ $v\equiv 0,$  and therefore, the fourth equation implies that $u({\pmb{\color{black}{x}}}) = 0$ almost everywhere in ${\pmb{\color{black}{x}}}\in\Omega.$ Assuming that $\lambda = 0,$ from the third and fourth equations of the system \eqref{401}, along with the boundary conditions of the problem, we obtain the following system
\begin{eqnarray}
\left\lbrace
\label{405}
\begin{array}{l}
\displaystyle a_{11}\Delta u + a_{12}\Delta v - \alpha(u - v) =  0,\quad {\pmb{\color{black}{x}}}\in\Omega,\quad t > 0, \\
\displaystyle a_{12}\Delta u + a_{22}\Delta v + \alpha(u - v) =  0, \quad {\pmb{\color{black}{x}}}\in\Omega,\quad t > 0, \\
u_{t}({\pmb{\color{black}{x}}},\,t) = v_{t}({\pmb{\color{black}{x}}},\,t) = 0,\quad {\pmb{\color{black}{x}}}\in\Omega,\quad t > 0, \\
u({\pmb{\color{black}{x}}},\,t) = 0,\quad  v({\pmb{\color{black}{x}}},\,t) = 0,\quad {\pmb{\color{black}{x}}}\in \partial\Omega\quad t > 0, \\
U({\pmb{\color{black}{x}}},\,t)  = 0,\quad  V({\pmb{\color{black}{x}}},\,t) = 0,\quad {\pmb{\color{black}{x}}}\in \partial\Omega\quad t > 0. 
\end{array}
\right. 
\end{eqnarray}
Applying the operator method to the  system \eqref{405}, one obtain that
$u \equiv 0$ and $v \equiv 0.$ Therefore, in any case, $\ker(i\lambda I - {\mathcal A}) = \{0\}.$ 
\end{proof}

\begin{corollary}
\label{C4.2}
If $\lambda \in \mathbb{R},$ then $i\lambda$ is not an eigenvalue of ${\mathcal A}$.
\end{corollary}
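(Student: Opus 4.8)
The plan is to observe that this corollary is an immediate restatement of Proposition \ref{P4.1}, so essentially no new work is required. By definition, a scalar $i\lambda$ is an eigenvalue of ${\mathcal A}$ precisely when there exists a nonzero vector $\mathbb{U}\in{\mathcal D}({\mathcal A})$ satisfying ${\mathcal A}\mathbb{U} = i\lambda\mathbb{U}$, or equivalently $(i\lambda I - {\mathcal A})\mathbb{U} = 0$. Thus $i\lambda$ is an eigenvalue if and only if $\ker(i\lambda I - {\mathcal A})\neq\{0\}$.

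First I would recall that Proposition \ref{P4.1} asserts that for every $\lambda\in\mathbb{R}$ the operator $i\lambda I - {\mathcal A}$ is injective, which is exactly the statement $\ker(i\lambda I - {\mathcal A}) = \{0\}$. Combining this with the equivalence above, I would conclude directly that no nonzero eigenvector associated with $i\lambda$ can exist, and therefore $i\lambda$ is not an eigenvalue of ${\mathcal A}$. There is no genuine obstacle here: the entire content of the argument was already carried out inside the proof of Proposition \ref{P4.1}, where the cases $\lambda\neq 0$ and $\lambda = 0$ were handled separately to force the candidate eigenvector $\mathbb{U} = (u,\,v,\,U,\,V,\,\varphi_{1},\,\varphi_{2})^{T}$ to vanish identically. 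The corollary merely records the conclusion of that proposition in the language of eigenvalues, which is the form in which it will be invoked when checking hypothesis (i) of the Arendt--Batty criterion (Theorem \ref{Arendt}).

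If anything deserves a line of emphasis, it is simply to make explicit that injectivity and the absence of eigenvalues on the imaginary axis are the same condition for a linear operator, so that Proposition \ref{P4.1} already delivers the verification of the first spectral hypothesis needed for strong stability. Accordingly, I would state the proof in a single sentence pointing back to Proposition \ref{P4.1}.
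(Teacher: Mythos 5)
Your proposal is correct and follows exactly the same route as the paper, which states Corollary \ref{C4.2} without proof precisely because it is an immediate reformulation of Proposition \ref{P4.1}: injectivity of $i\lambda I - \mathcal{A}$ is the same as $\ker(i\lambda I - \mathcal{A}) = \{0\}$, i.e., the nonexistence of an eigenvector for $i\lambda$. Your observation that this is the form needed for hypothesis (i) of the Arendt--Batty theorem also matches how the paper uses it in Theorem \ref{T4.5}.
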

\begin{proposition}
\label{p4.2}
If $\eta = 0,$ then the operator ${\mathcal A}$ is not invertible, and consequently $0\in \sigma({\mathcal A}).$
\end{proposition}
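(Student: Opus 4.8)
The plan is to prove that $0\in\sigma(\mathcal{A})$ by showing that $\mathcal{A}$ fails to be surjective. Since $\mathcal{A}$ generates a $C_0$-semigroup (Proposition \ref{lu}), it is closed and densely defined, and for such an operator $0$ lies in the resolvent set precisely when $\mathcal{A}\colon\mathcal{D}(\mathcal{A})\to\mathcal{H}$ is a bijection (the boundedness of the inverse being automatic by the closed graph theorem). Hence it suffices to exhibit a single right-hand side $\mathbb{F}\in\mathcal{H}$ for which the equation $\mathcal{A}\mathbb{U}=\mathbb{F}$ admits no solution $\mathbb{U}\in\mathcal{D}(\mathcal{A})$; this forces $0\in\sigma(\mathcal{A})$ and, in particular, non-invertibility.

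First I would choose the obstruction to sit in the first component. Fix any nonzero $f\in H_0^1(\Omega)$ and set $\mathbb{F}=(f,0,0,0,0,0)^T$, which clearly belongs to $\mathcal{H}$. Assuming for contradiction that some $\mathbb{U}=(u,v,U,V,\varphi_1,\varphi_2)^T\in\mathcal{D}(\mathcal{A})$ solves $\mathcal{A}\mathbb{U}=\mathbb{F}$, the first component of $\mathcal{A}$ in \eqref{304} yields $U=f$, while its fifth component gives $-(\xi^2+\eta)\varphi_1(\xi)+\mu(\xi)U=0$ for a.e.\ $\xi\in\mathbb{R}$. The point is that with $\eta=0$ this relation leaves no freedom: it forces
\[
\varphi_1(\xi)=\frac{\mu(\xi)}{\xi^2}\,U=|\xi|^{\frac{2\alpha-5}{2}}f\qquad\text{a.e.},
\]
so that
\[
\|\varphi_1\|_{L^2(\mathbb{R};L^2(\Omega))}^2=\Bigl(\int_{\mathbb{R}}|\xi|^{2\alpha-5}\,d\xi\Bigr)\,\|f\|_{L^2(\Omega)}^2 .
\]
Near $\xi=0$ the integrand behaves like $|\xi|^{2\alpha-5}$ with $2\alpha-5<-1$ for every $\alpha\in(0,1)$, so the $\xi$-integral diverges; since $f\neq0$, this shows $\varphi_1\notin L^2(\mathbb{R};L^2(\Omega))$, contradicting $\mathbb{U}\in\mathcal{H}$. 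Consequently $\mathbb{F}\notin\operatorname{Ran}(\mathcal{A})$, the operator $\mathcal{A}$ is not surjective, and therefore $0\in\sigma(\mathcal{A})$.

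The step I expect to be the crux is precisely the verification that the forced $\varphi_1$ cannot be rescued, i.e.\ that the singularity at $\xi=0$ is genuinely incompatible with membership in $L^2(\mathbb{R};L^2(\Omega))$; this is where the hypothesis $\eta=0$ is essential rather than cosmetic. Indeed, I would close with the remark that for $\eta>0$ the factor $(\xi^2+\eta)^{-1}$ stays bounded near the origin, so $\mu(\xi)/(\xi^2+\eta)$ is square-integrable there (because $2\alpha-1>-1$), the fifth and sixth equations become invertible in $L^2$, and no such obstruction arises. The low-frequency divergence that defeats invertibility materializes only once the shift $\eta$ is switched off.
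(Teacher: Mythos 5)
Your proof is correct and follows essentially the same route as the paper's: exhibit a right-hand side $\mathbb{F}=(f,0,0,0,0,0)^T$ with $f\neq 0$, observe that any preimage must have $U=f$ and hence $\varphi_1(\xi)=|\xi|^{\frac{2\alpha-5}{2}}f$ when $\eta=0$, and conclude from the divergence of $\int_{\mathbb{R}}|\xi|^{2\alpha-5}\,d\xi$ near $\xi=0$ that $\varphi_1\notin L^2(\mathbb{R};L^2(\Omega))$, so $\mathcal{A}$ is not surjective. The only differences are cosmetic: the paper takes the specific choice $f=\sin(\pi x/L)$ on $\Omega=(0,L)$, whereas you allow an arbitrary nonzero $f\in H_0^1(\Omega)$ and spell out the functional-analytic reduction (closedness plus the closed graph theorem) more explicitly.
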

\begin{proof} For $\Omega = (0,L)$, let $Z_{0}=\left(\sin(\pi x/L),\,0,\,0,\,0,\,0,\,0\right)\in \mathcal{H}$ and we assume that there exists $\mathbb{U}_{0} = (u_{0},\,v_{0},\,U_{0},\,V_{0},\,\varphi_{1,\,0},\,\varphi_{2,\,0})\in \mathcal{D}({\mathcal A})$ such that ${\mathcal A}\mathbb{U}_{0} = Z_{0}.$ In this case, $\varphi_{1,0}(\xi) = |\xi|^{\frac{2\alpha - 5}{2}}\sin(\pi x/L).$ However, $\varphi_{1,0}\notin L^{2}(\mathbb{R}; L^{2}(\Omega))$ for $0 < \alpha < 1.$ 

\end{proof}

\begin{proposition}
	\label{P4.3}
	\begin{enumerate}
		\item [(a)] If $\eta = 0,$ then $i\lambda I - {\mathcal A}$ is surjective, for any $\lambda\neq 0.$
		\item [(b)] If $\eta > 0$ and $\lambda\in \mathbb{R},$ then $i\lambda I - {\mathcal A}$ is surjective. 
	\end{enumerate}
\end{proposition}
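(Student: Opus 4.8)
The plan is to prove surjectivity by reducing the resolvent equation $(i\lambda I-\mathcal{A})\mathbb{U}=F$ to a coupled elliptic system for $(u,v)$ and then applying the Lax--Milgram theorem. Given $F=(f_1,\dots,f_6)^T\in\mathcal{H}$, I would write out the six scalar equations of $(i\lambda I-\mathcal{A})\mathbb{U}=F$. The first two give $U=i\lambda u-f_1$ and $V=i\lambda v-f_2$, while the last two are algebraic in $\xi$ and yield
\[
\varphi_1=\frac{\mu(\xi)U+f_5}{\xi^2+\eta+i\lambda},\qquad \varphi_2=\frac{\mu(\xi)V+f_6}{\xi^2+\eta+i\lambda}.
\]
Substituting these into the third and fourth equations and using $\int_{\mathbb{R}}\mu(\xi)^2/(\xi^2+\eta+i\lambda)\,d\xi=E(\lambda,\alpha,\eta)$ (finite by Lemma \ref{L2.4}), I obtain a system of the form
\[
-a_{11}\Delta u-a_{12}\Delta v+\alpha(u-v)+(i\lambda\mathfrak{C}E-\rho_1\lambda^2)u=g_1,
\]
\[
-a_{12}\Delta u-a_{22}\Delta v-\alpha(u-v)+(i\lambda\mathfrak{C}E-\rho_2\lambda^2)v=g_2,
\]
for $(u,v)\in[H_0^1(\Omega)]^2$, where the right-hand sides $g_1,g_2\in L^2(\Omega)$ collect the data; in particular the $f_5,f_6$ contributions $\mathfrak{C}\int_{\mathbb{R}}\mu(\xi)f_5/(\xi^2+\eta+i\lambda)\,d\xi$ lie in $L^2(\Omega)$ precisely by the second part of Lemma \ref{L2.4}.

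Next I would set up the weak formulation on $V=[H_0^1(\Omega)]^2$ and check that the associated sesquilinear form $B$ is continuous and coercive. By the positive definiteness of $A$ the gradient part is bounded below by $c_A(\|\nabla u\|^2+\|\nabla v\|^2)$, the term $\alpha\int_\Omega|u-v|^2$ is nonnegative, and recalling from the proof of Lemma \ref{L2.4} the decomposition $E=F+i\lambda G$ with $F=F(\lambda,\alpha,\eta)>0$, a direct computation gives $\mathrm{Im}\,(i\lambda\mathfrak{C}E)=\lambda\mathfrak{C}F$, so that
\[
\mathrm{Re}\,B\ge c_A\|\nabla(u,v)\|^2-C\|(u,v)\|_{L^2}^2,\qquad \mathrm{Im}\,B=\lambda\mathfrak{C}F\big(\|u\|^2+\|v\|^2\big).
\]
When $\lambda\neq0$ — the only case in (a), and the case $\lambda\neq0$ of (b) — we have $F>0$ and $\mathfrak{C}=\pi^{-1}\sin(\alpha\pi)>0$, so $|B|\ge|\mathrm{Im}\,B|$ controls $\|(u,v)\|_{L^2}^2$; combining this with $|B|\ge\mathrm{Re}\,B$ absorbs the negative $L^2$ term and produces $|B((u,v),(u,v))|\ge c\|(u,v)\|_V^2$. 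When $\lambda=0$ (possible only in (b), where $\eta>0$) all zeroth-order coefficients vanish and $B$ reduces to the purely coercive form $\int_\Omega X^TAX+\alpha\int_\Omega|u-v|^2$. In either case Lax--Milgram yields a unique $(u,v)\in V$. (Alternatively one could write the elliptic operator as coercive-plus-compact and invoke the Fredholm alternative, Theorem \ref{Fredholm}, together with the injectivity from Corollary \ref{C4.2}; the coercivity route above is self-contained.)

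Finally I would recover $U,V,\varphi_1,\varphi_2$ from the formulas above and verify $\mathbb{U}\in\mathcal{D}(\mathcal{A})$. Here $U=i\lambda u-f_1,\ V=i\lambda v-f_2\in H_0^1(\Omega)$, while $\varphi_1,\varphi_2\in L^2(\mathbb{R};L^2(\Omega))$ and $\xi\varphi_i\in L^2(\mathbb{R};L^2(\Omega))$ follow from the convergence of the integrals $G$ and $\int_{\mathbb{R}}\xi^2|\xi|^{2\alpha-1}/((\xi^2+\eta)^2+\lambda^2)\,d\xi$ (integrable since $2\alpha-3<-1$ at infinity), together with the boundedness of $1/|\xi^2+\eta+i\lambda|$ away from its degeneracy, which holds exactly under the hypotheses $\eta>0$ or $\lambda\neq0$. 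The $H^2$-regularity $a_{11}u+a_{12}v,\ a_{12}u+a_{22}v\in H^2(\Omega)$ then follows from elliptic regularity on the smooth domain $\Omega$, since their Laplacians equal $L^2(\Omega)$ combinations of the already-recovered quantities. I expect the main obstacle to be the coercivity step when $\lambda\neq0$: the zeroth-order coefficient has a wrong-signed real part that must be absorbed using the strictly positive imaginary part $\lambda\mathfrak{C}F$, so this is precisely the point where the structure of the fractional damping (through $F>0$) is essential, and the degeneracy at $\lambda=0,\ \eta=0$ is exactly what forces the case split and the exclusion of $\lambda=0$ in part (a).
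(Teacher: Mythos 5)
Your reduction of the resolvent equation (solving for $U,V$, then $\varphi_1,\varphi_2$ algebraically, and collapsing to an elliptic system for $(u,v)$ whose data lie in $L^2(\Omega)$ by Lemma \ref{L2.4}) is exactly the paper's, as is your treatment of the case $\lambda=0$, $\eta>0$ by direct Lax--Milgram. Where you genuinely diverge is the case $\lambda\neq 0$: the paper does \emph{not} attempt coercivity of the full form. It isolates the operator $\mathcal{M}$ (elliptic part plus the $i\lambda\mathfrak{C}E$ terms, without the $-\rho_i\lambda^2$ terms), inverts it by Lax--Milgram, rewrites the problem as $\left(-\lambda^{2}\mathcal{M}^{-1} - I\right)(u,v)^{T}=\mathcal{M}^{-1}(\widetilde{F},\widetilde{G})^{T}$, invokes compactness of $\mathcal{M}^{-1}$ and the Fredholm alternative (Theorem \ref{Fredholm}), and then checks that $\ker\left(-\lambda^{2}\mathcal{M}^{-1} - I\right)$ is trivial by a multiplier argument --- that is, surjectivity is deduced from injectivity. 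You instead absorb $-\rho_i\lambda^{2}$ directly: since $\mathrm{Im}\,(i\lambda\mathfrak{C}E)=\lambda\mathfrak{C}F$ with $F>0$ (an identity that is robust to the sign slip in the paper's decomposition --- the correct formula is $E=F-i\lambda G$, but either sign gives $\mathrm{Im}\,(i\lambda E)=\lambda F$), you get $|B(w,w)|\geq |\lambda|\mathfrak{C}F\|w\|_{L^{2}}^{2}$, and combining this with $|B(w,w)|\geq \mathrm{Re}\,B(w,w)\geq c_{A}\|\nabla w\|_{L^{2}}^{2}-C\|w\|_{L^{2}}^{2}$ in a convex combination indeed yields $|B(w,w)|\geq c\|w\|_{H^{1}}^{2}$; this argument is sound, and it exploits explicitly the same damping-induced positivity that the paper uses only implicitly when it kills the kernel. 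What your route buys: it is self-contained (no compactness, no Fredholm alternative, no separate injectivity step) and gives a quantitative lower bound on the form, at the price of a constant that degenerates as $\lambda$ varies. What the paper's route buys: it is the standard template when coercivity is unavailable, requiring only injectivity plus the compact embedding, with no sign bookkeeping. One caveat you should make explicit: you need the variant of Lax--Milgram with hypothesis $|B(w,w)|\geq c\|w\|^{2}$ rather than $\mathrm{Re}\,B(w,w)\geq c\|w\|^{2}$; it is true (the bound gives injectivity and closed range of the associated operator, and applying the diagonal bound to any element annihilating the range gives density), but it is not the textbook statement, so it deserves a line in the write-up.
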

\begin{proof} 
Given $\mathbb{F} = (f_{1},\,f_{2},\,f_{3},\,f_{4},\,f_{5},\,f_{6})^{T}\in \mathcal{H},$ we aim to show that there exists a vector $\mathbb{U} = (u,\,v,\,U,\,V,\,\varphi_{1},\,\varphi_{2})^{T}\in {\mathcal D}({\mathcal A})$ such that $(i\lambda I - {\mathcal A})\mathbb{U} = \mathbb{F}.$ That is,
\begin{align}
\label{406}
\begin{cases}
i\lambda u - U = f_{1} \iff U =  i\lambda u - f_{1} \\
i\lambda v - V = f_{2}  \iff V =  i\lambda v - f_{2} \\
i\lambda\rho_{1}U - \displaystyle a_{11}\Delta u - a_{12}\Delta v + \alpha (u - v) + \mathfrak{C}\displaystyle\int_{\mathbb{R}}\mu(\xi)\varphi_{1}(\xi)d\xi  = \rho_{1}f_{3},  \\
i\lambda \rho_{2}V - \displaystyle a_{12}\Delta u - a_{22}\Delta v - \alpha (u - v) + \mathfrak{C}\displaystyle\int_{\mathbb{R}}\mu(\xi)\varphi_{2}(\xi)d\xi  = \rho_{2}f_{4}, \\
(|\xi|^{2} + \eta + i\lambda)\varphi_{1}(\xi) - \mu(\xi)U = f_{5},  \quad \forall \,\xi\in \mathbb{R}, \\
(|\xi|^{2} + \eta + i\lambda)\varphi_{2}(\xi) - \mu(\xi)V = f_{6},  \quad \forall \,\xi\in \mathbb{R}. 
\end{cases}
\end{align}
Replacing \eqref{406}$_{1,\,2}$ into \eqref{406}$_{5,\,6}$ we obtain 
\begin{align}
\label{407}
\begin{cases}
\displaystyle\varphi_{1}(\xi) = \frac{f_{5}(\xi)}{(|\xi|^{2} + \eta + i\lambda)} - \frac{\mu(\xi)f_{1}}{(|\xi|^{2} + \eta + i\lambda)} + \frac{i\lambda\mu(\xi)u}{(|\xi|^{2} + \eta + i\lambda)}, \\
\displaystyle\varphi_{2}(\xi) = \frac{f_{6}(\xi)}{(|\xi|^{2} + \eta + i\lambda)} - \frac{\mu(\xi)f_{2}}{(|\xi|^{2} + \eta + i\lambda)} + \frac{i\lambda\mu(\xi)v}{(|\xi|^{2} + \eta + i\lambda)}.
\end{cases} 
\end{align}
From Lemma $\ref{L2.4}$, it follows that
\begin{align}
\label{408}
\begin{cases}
\displaystyle\mathfrak{C}\int_{\mathbb{R}}\mu(\xi)\varphi_{1}(\xi)d\xi
= \mathfrak{C}\left[\int_{\mathbb{R}}\dfrac{\mu(\xi)f_{5}(\xi)d\xi}{(|\xi|^{2} + \eta + i\lambda)} + E_{1}(\lambda,\,\alpha,\,\eta)(i\lambda u - f_{1})\right], \\
\\
\displaystyle\mathfrak{C}\int_{\mathbb{R}}\mu(\xi)\varphi_{2}(\xi)d\xi
= \mathfrak{C}\left[\int_{\mathbb{R}}\frac{\mu(\xi)f_{6}(\xi)d\xi}{(|\xi|^{2} + \eta + i\lambda)} + E_{2}(\lambda,\,\alpha,\,\eta)(i\lambda v - f_{2})\right].
\end{cases}
\end{align}
Thus, from the remaining equations in the system \eqref{406}$_{3,\,4},$ it follows that
\begin{align*}
\begin{cases}
- \lambda^{2}\rho_{1}u - i\lambda f_{1} - \displaystyle a_{11}\Delta u - a_{12}\Delta v + \alpha (u - v) \\
\displaystyle +\ \mathfrak{C}\left[\int_{\mathbb{R}}\dfrac{\mu(\xi)f_{5}(\xi)d\xi}{(|\xi|^{2} + \eta + i\lambda)} + E_{1}(\lambda,\,\alpha,\,\eta)(i\lambda u - f_{1})\right] = \rho_{1}f_{3}, \\
\\
- \lambda^{2}\rho_{2}v - i\lambda f_{2} - \displaystyle a_{12}\Delta u - a_{22}\Delta v - \alpha (u - v) \\
\displaystyle +\ \mathfrak{C}\left[\int_{\mathbb{R}}\frac{\mu(\xi)f_{6}(\xi)d\xi}{(|\xi|^{2} + \eta + i\lambda)} + E_{2}(\lambda,\,\alpha,\,\eta)(i\lambda v - f_{2})\right]
= \rho_{2}f_{4}.
\end{cases}
\end{align*}
or
\begin{align}
\label{409}
\begin{cases}
- \lambda^{2}\rho_{1}u - i\lambda f_{1} - \displaystyle a_{11}\Delta u - a_{12}\Delta v + \alpha (u - v) \\
\displaystyle +\ \mathfrak{C}\int_{\mathbb{R}}\dfrac{\mu(\xi)f_{5}(\xi)d\xi}{(|\xi|^{2} + \eta + i\lambda)} + \mathfrak{C}E_{1}(\lambda,\,\alpha,\,\eta)(i\lambda u - f_{1}) = \rho_{1}f_{3}, \\
\\
- \lambda^{2}\rho_{2}v - i\lambda f_{2} - \displaystyle a_{12}\Delta u - a_{22}\Delta v - \alpha (u - v) \\
\displaystyle +\ \mathfrak{C}\int_{\mathbb{R}}\frac{\mu(\xi)f_{6}(\xi)d\xi}{(|\xi|^{2} + \eta + i\lambda)} + \mathfrak{C}E_{2}(\lambda,\,\alpha,\,\eta)(i\lambda v - f_{2})
= \rho_{2}f_{4}.
\end{cases}
\end{align}
Then, if $\lambda = 0,$ by hypothesis, we have $\eta >0.$ In that case, from \eqref{409} we have
\begin{align}
\label{410}
\begin{cases}
- \displaystyle a_{11}\Delta u - a_{12}\Delta v + \alpha (u - v) \displaystyle  = \rho_{1}f_{3} + \mathfrak{C}E_{1}(0,\,\alpha,\,\eta)f_{1} - \mathfrak{C}\int_{\mathbb{R}}\dfrac{\mu(\xi)f_{5}(\xi)d\xi}{(|\xi|^{2} + \eta)}, \\
\\
- \displaystyle a_{12}\Delta u - a_{22}\Delta v - \alpha (u - v)= \rho_{2}f_{4} + \mathfrak{C}E_{2}(0,\,\alpha,\,\eta)f_{2} - \mathfrak{C}\int_{\mathbb{R}}\frac{\mu(\xi)f_{6}(\xi)d\xi}{(|\xi|^{2} + \eta)} .
\end{cases}
\end{align}
Multiplying equations in \eqref{410} by $\widetilde{u}\in H_{0}^{1}(\Omega)$ and $\widetilde{v}\in H_{0}^{1}(\Omega)$ respectively, and  proceeding in a similar manner to the approach used in the proof of Proposition $\ref{lu}$, we get the problem of finding a vector $\left(u,\,v\right)\in H_{0}^{1}(\Omega)\times H_{0}^{1}(\Omega)$ such that
\begin{equation}
\label{411}
\mathcal{B}((u,\,v),\,(\widetilde{u},\,\widetilde{v})) = \mathcal{L}(\widetilde{u},\,\widetilde{v}),
\end{equation}
where $\mathcal{B}:[H_{0}^{1}(\Omega)]^{2}\times [H_{0}^{1}(\Omega)]^{2}\longrightarrow\mathbb{R}$ is the bilinear form defined by
\begin{eqnarray}
\mathcal{B}((u,\,v),\,(\widetilde{u},\,\widetilde{v})) & = & a_{11}\displaystyle\int_{\Omega}\nabla u\cdot\nabla\widetilde{u}\,d{\pmb{\color{black}{x}}} + a_{12}\displaystyle\int_{\Omega}\nabla v\cdot\nabla\widetilde{u}\,d{\pmb{\color{black}{x}}} + \alpha\int_{\Omega}u\widetilde{u}\,d{\pmb{\color{black}{x}}} - \alpha\int_{\Omega}v\widetilde{u}\,d{\pmb{\color{black}{x}}}\nonumber  \\
& & -\ a_{12}\displaystyle\int_{\Omega}\nabla u\cdot\nabla\widetilde{v}\,d{\pmb{\color{black}{x}}} + a_{22}\displaystyle\int_{\Omega}\nabla v\cdot\nabla\widetilde{v}\,d{\pmb{\color{black}{x}}} - \alpha\int_{\Omega}u\widetilde{v}\,d{\pmb{\color{black}{x}}} + \alpha\int_{\Omega}v\widetilde{v}\,d{\pmb{\color{black}{x}}}
\end{eqnarray}
and $\mathcal{L}:H_{0}^{1}(\Omega)\times H_{0}^{1}(\Omega)\longrightarrow\mathbb{R}$ is the linear form defined by
\begin{eqnarray}
\mathcal{L}(\widetilde{u},\,\widetilde{v}) = \int_{\Omega}F\,\widetilde{u}\,d{\pmb{\color{black}{x}}} + \int_{\Omega}G\,\widetilde{v}\,d{\pmb{\color{black}{x}}},
\end{eqnarray}
where 
$$
F = \rho_{1}f_{3} + \mathfrak{C}E_{1}(0,\,\alpha,\,\eta)f_{1} - \mathfrak{C}\displaystyle  \int_{\Omega}\widetilde{u}\displaystyle\int_{\mathbb{R}}\dfrac{\mu(\xi)f_{5}(\xi)d\xi}{(|\xi|^{2} + \eta)}\,d{\pmb{\color{black}{x}}}
$$
and
$$
G = \rho_{2}f_{4} + \mathfrak{C}E_{2}(0,\,\alpha,\,\eta)f_{2} - \mathfrak{C}\displaystyle\int_{\Omega}\widetilde{v}\displaystyle\int_{\mathbb{R}}\dfrac{\mu(\xi)f_{6}(\xi)d\xi}{(|\xi|^{2} + \eta)}\,d{\pmb{\color{black}{x}}}.
$$ 
However,  
\begin{align*}
\begin{cases}
\ \left|\mathfrak{C}\displaystyle\int_{\Omega}\widetilde{u}\displaystyle\int_{\mathbb{R}}\frac{\mu(\xi)f_{5}(\xi)d\xi}{|\xi|^{2} + \eta}\,d{\pmb{\color{black}{x}}}\right|\leq |\Omega|\mathfrak{C}H_{1}({\pmb{\color{black}{x}}},\,0,\,\alpha,\,\eta)\|\widetilde{u}\|_{H_{0}^{1}(\Omega)}, \\
\\
\ \left|\mathfrak{C}\displaystyle\int_{\Omega}\widetilde{v}\displaystyle\int_{\mathbb{R}}\frac{\mu(\xi)f_{6}(\xi)d\xi}{|\xi|^{2} + \eta}\,d{\pmb{\color{black}{x}}}\right|\leq |\Omega|\mathfrak{C}H_{2}({\pmb{\color{black}{x}}},\,0,\,\alpha,\,\eta)\|\widetilde{v}\|_{H_{0}^{1}(\Omega)}.
	\end{cases}
\end{align*}
$\mathcal{L}$ is continuous, and therefore it suffices utilize the Lax-Milgram Theorem.\\
Finally, suppose that $\lambda\neq 0$. Define the linear unbounded operator $\mathcal{M}:[H_{0}^{1}(\Omega)]^2\to \left([H_{0}^{1}(\Omega)]^{2}\right)^{*}$ given by
\begin{center}
$\displaystyle\mathcal{M}\left( \begin{array}{c}
u    \\
v
\end{array}  \right)
=
\left(\begin{array}{c}
- \displaystyle a_{11}\Delta u - a_{12}\Delta v + \alpha (u - v) \displaystyle + I_{1}(\lambda,\,\beta,\,\eta)u \\
\\
 - \displaystyle a_{12}\Delta u - a_{22}\Delta v - \alpha (u - v) + I_{2}(\lambda,\,\beta,\,\eta)v
\end{array}\right),$
\end{center}
where $I_{1}(\lambda,\,\alpha,\,\eta) = i\,\lambda\,u\,\mathfrak{C}\,E_{1}(\lambda,\,\alpha,\,\eta)\ $ and $\ I_{2}(\lambda,\,\alpha,\,\eta) = i\,\lambda\,v\,\mathfrak{C}\,E_{2}(\lambda,\,\alpha,\,\eta).$ 
From Lax-Milgram Theorem, it follows that it is an isomorphism. Thus, we have that the system \eqref{410} is equivalent to

\begin{equation}\label{43.11}
	\left(-\lambda^{2}\mathcal{M}^{-1} - I\right)
	\left( \begin{array}{c}
		u    \\
		v
	\end{array}  \right) = \mathcal{M}^{-1}\left( \begin{array}{c}
		\widetilde{F}    \\
		\widetilde{G}
	\end{array} \right),
\end{equation}
$$
\widetilde{F} = [i\lambda + \mathfrak{C}E_{1}(\lambda,\,\alpha,\,\eta)]f_{1} + \rho_{1}f_{3} + |\Omega|\mathfrak{C}H_{1}({\pmb{\color{black}{x}}},\,\lambda,\,\alpha,\,\eta)\|\widetilde{u}\|_{H_{0}^{1}(\Omega)}
$$ 
and 
$$\widetilde{G}=[i\lambda + \mathfrak{C}E_{2}(\lambda,\,\alpha,\,\eta)]f_{2} + \rho_{2}f_{4} + |\Omega|\mathfrak{C}H_{2}({\pmb{\color{black}{x}}},\,\lambda,\,\alpha,\,\eta)\|\widetilde{v}\|_{H_{0}^{1}(\Omega)}.
$$
Since the operator $\mathcal{M}^{-1}$ is an isomorphism and $I$ is a compact operator from $[H_{0}^{1}(\Omega)]^{2}$ to $\left([H_{0}^{1}(\Omega)]^2\right)^{*}.$ Then $\mathcal{M}^{-1}$ is compact operator from $[H_{0}^{1}(\Omega)]^{2}$ to $[H_{0}^{1}(\Omega)]^{2}.$ Consequently, by Fredholm alternative (Theorem \ref{Fredholm}), proving the existence of $(u,\,v)\in [H_{0}^{1}(\Omega)]^{2}$ solution of \eqref{43.11} reduces to proving 
$$
\ker\left(-\lambda^{2}\mathcal{M}^{-1} - I\right)
= \left\{\left( \begin{array}{c}
 	0    \\
 	0
 \end{array}  \right)\right\}.
 $$
Indeed, if $(\widetilde{u},\,\widetilde{v})^{T}\in \ker\left(-\lambda^{2}\mathcal{M}^{-1} - I\right),$ then 
$$	
\left(-\lambda^{2}I - \mathcal{M}\right)
\left( \begin{array}{c}
\widetilde{u}    \\
\widetilde{v}
\end{array} \right) = 
\left(\begin{array}{c}
	0    \\
	0
\end{array}  
\right).
$$
That is, 
\begin{align}
\label{43.12}
\begin{cases}
(I_{1}(\lambda,\,\alpha,\,\eta) - \lambda^{2})\widetilde{u} + \displaystyle a_{11}\Delta \widetilde{u} + a_{12}\Delta \widetilde{v} - \alpha (\widetilde{u} + \widetilde{v})  = 0,  \\
\\
(I_{2}(\lambda,\,\alpha,\,\eta) + \lambda^{2})\widetilde{v} + \displaystyle a_{12}\Delta \widetilde{u} - a_{22}\Delta \widetilde{v} + \alpha (\widetilde{u} - \widetilde{v}) = 0.
\end{cases}
\end{align}
Multiplying \eqref{43.12}, by $\overline{\widetilde{u}}$ and $\overline{\widetilde{v}}$ respectively, integrating by parts and using the boundary conditions, it follows that $(\widetilde{u},\,\widetilde{v})^{T} = (0,\,0)$. So, it follows that from Fredholm alternative  (Theorem \ref{Fredholm}) there is a unique solution $(u,\,v)\in [H_{0}^{1}(\Omega)]^{2}$ for \eqref{43.11}. By elliptic regularity, it follows that $u,\,v\in H_{0}^{1}(\Omega)\cap H^{2}(\Omega).$ Now just take $U = i\lambda u - f_{1},$ $\ V = i\lambda v - f_{2}$ and $\varphi_{1}(\xi),$ $\varphi_{2}(\xi)$ as given in \eqref{407}. Evidently,  $\mathbb{U} = (u,\,v,\,U,\,V,\,\varphi_{1},\,\varphi_{2})^{T}\in \mathcal{D}(\mathcal{A})$ and  $(i\lambda\mathcal{I} - \mathcal{A})\mathbb{U} = \mathbb{F}.$ 

\end{proof}

\begin{corollary}
	\label{C4.4} 	
	\begin{enumerate}
		\item [(a)] If $\eta=0,$ then $i\lambda \notin \sigma({\mathcal A}),$ for any $\lambda \in \mathbb{R}^*,$
		\item [(b)] If $\eta> 0,$ then $i\lambda \notin \sigma({\mathcal A}),$ for any $\lambda\in \mathbb{R}.$
	\end{enumerate}
\end{corollary}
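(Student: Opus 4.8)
The plan is to deduce the statement directly from the injectivity and surjectivity results already established, together with the fact that $\mathcal{A}$ is the generator of a $C_{0}$-semigroup of contractions (Proposition \ref{lu}) and is therefore a closed, densely defined operator. Recall that for a closed operator, a point $i\lambda$ lies outside the spectrum precisely when $i\lambda I - \mathcal{A} : \mathcal{D}(\mathcal{A}) \to \mathcal{H}$ is bijective; the boundedness of the inverse is then automatic from the closed graph theorem (equivalently, the bounded inverse theorem applied to the closed bijection $i\lambda I - \mathcal{A}$). So the whole matter reduces to pairing the injectivity result with the appropriate surjectivity result in each regime of $\eta$.

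First I would treat case (b), where $\eta > 0$. For every $\lambda \in \mathbb{R}$, Proposition \ref{P4.1} gives injectivity of $i\lambda I - \mathcal{A}$, while Proposition \ref{P4.3}(b) gives surjectivity. Hence $i\lambda I - \mathcal{A}$ is a closed bijection from $\mathcal{D}(\mathcal{A})$ onto $\mathcal{H}$, so $(i\lambda I - \mathcal{A})^{-1}$ is bounded and $i\lambda \notin \sigma(\mathcal{A})$. This covers all $\lambda \in \mathbb{R}$, including $\lambda = 0$, which is why the statement in (b) carries no restriction.

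For case (a), where $\eta = 0$, the same reasoning applies but only for $\lambda \neq 0$: injectivity again comes from Proposition \ref{P4.1}, and surjectivity from Proposition \ref{P4.3}(a), whose hypothesis requires $\lambda \neq 0$. The restriction to $\lambda \in \mathbb{R}^{*}$ is essential rather than a technical artifact: Proposition \ref{p4.2} shows that when $\eta = 0$ the operator $\mathcal{A}$ fails to be invertible, so that $0 \in \sigma(\mathcal{A})$ and $i\lambda I - \mathcal{A}$ cannot be surjective at $\lambda = 0$. Thus the point $\lambda = 0$ must be, and is, excluded in (a).

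I do not expect a genuine obstacle here once the surjectivity Proposition \ref{P4.3} is in hand, since the corollary is essentially a repackaging of the two preceding propositions via the closed graph theorem. The only point demanding care is the bookkeeping of which values of $\lambda$ are admissible in each regime of $\eta$, so that the exclusion of $\lambda = 0$ in part (a) is correctly matched to the failure of surjectivity recorded in Proposition \ref{p4.2}, while part (b) is left unrestricted.
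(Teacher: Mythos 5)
Your proposal is correct and follows exactly the route the paper intends: the corollary is stated immediately after Proposition \ref{P4.3} precisely because it results from pairing the injectivity of $i\lambda I - \mathcal{A}$ (Proposition \ref{P4.1}) with the surjectivity in the appropriate regime of $\eta$ (Proposition \ref{P4.3}), the boundedness of the resolvent then being automatic from the closed graph theorem since $\mathcal{A}$, as a semigroup generator, is closed. Your additional remark that the exclusion of $\lambda=0$ in part (a) is forced by Proposition \ref{p4.2} matches the paper's treatment (compare the proof of Theorem \ref{T4.5}, where $\sigma(\mathcal{A})\cap i\mathbb{R}=\{0\}$ when $\eta=0$).
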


\begin{theorem}
\label{T4.5}
The C$_{0}$-semigroup $\{{\mathcal S}(t)\}_{t\ge 0}$ generated by operator ${\mathcal A}$ is strongly stable in $\mathcal{H},$ i. e.,
$$
\lim_{t\to +\infty}\left\|e^{t{\mathcal A}}\mathbb{U}_{0}\right\|_{\mathcal{H}} = 0, \quad\forall \; \mathbb{U}_{0}\in \mathcal{H}.
$$
\end{theorem}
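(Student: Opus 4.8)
The plan is to apply the Arendt--Batty criterion (Theorem \ref{Arendt}), whose two hypotheses have essentially been prepared by the preceding results. Since $\mathcal{H}$ is a Hilbert space it is reflexive, so the theorem is applicable to $\{\mathcal{S}(t)\}_{t\ge 0}$ once I verify that $\mathcal{A}$ has no purely imaginary eigenvalues and that $\sigma(\mathcal{A})\cap i\mathbb{R}$ is countable. No new estimates should be required; the argument is the synthesis of the spectral facts already established.

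First I would dispatch hypothesis (i): for every $\lambda\in\mathbb{R}$, the number $i\lambda$ is not an eigenvalue of $\mathcal{A}$. This is exactly Corollary \ref{C4.2}, itself a restatement of Proposition \ref{P4.1}, where we showed $\ker(i\lambda I-\mathcal{A})=\{0\}$ by exploiting the dissipation identity $\mathrm{Re}\langle\mathcal{A}\mathbb{U},\mathbb{U}\rangle_{\mathcal{H}}=0$ to force $\varphi_{1}=\varphi_{2}=0$, then propagating this back through the eigenvalue system \eqref{401} to conclude $U=V=0$ and finally $u=v=0$.

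Next I would verify hypothesis (ii) by splitting into the two regimes governed by $\eta$. If $\eta>0$, Corollary \ref{C4.4}(b) gives $i\lambda\notin\sigma(\mathcal{A})$ for every $\lambda\in\mathbb{R}$, so $\sigma(\mathcal{A})\cap i\mathbb{R}=\emptyset$, which is trivially countable. If $\eta=0$, Corollary \ref{C4.4}(a) shows $i\lambda\notin\sigma(\mathcal{A})$ for all $\lambda\neq0$, while Proposition \ref{p4.2} establishes that $\mathcal{A}$ fails to be invertible, so $0\in\sigma(\mathcal{A})$; hence $\sigma(\mathcal{A})\cap i\mathbb{R}=\{0\}$, a single point and therefore countable. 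In either case hypothesis (ii) holds, and Arendt--Batty then yields the claimed strong stability.

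The substantive work lies entirely in the earlier propositions rather than in this theorem: the surjectivity argument of Proposition \ref{P4.3}, which combined the Lax--Milgram theorem (for the case $\lambda=0$, $\eta>0$) with a Fredholm-alternative reduction (for $\lambda\neq0$) to place the imaginary axis in the resolvent set, is what makes hypothesis (ii) accessible. Consequently the only point in the present proof that genuinely requires attention is the case distinction on $\eta$, since the structure of $\sigma(\mathcal{A})\cap i\mathbb{R}$ differs between the dissipative-at-zero regime $\eta>0$ and the borderline regime $\eta=0$, where the isolated spectral point at the origin must be acknowledged and shown to be harmless for countability.
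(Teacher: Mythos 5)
Your proposal is correct and follows essentially the same route as the paper: invoke the Arendt--Batty theorem, using Corollary \ref{C4.2} for the absence of purely imaginary eigenvalues, and the case split on $\eta$ via Corollary \ref{C4.4} together with Proposition \ref{p4.2} to see that $\sigma(\mathcal{A})\cap i\mathbb{R}$ is either empty or $\{0\}$, hence countable. No discrepancy with the paper's argument.
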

\begin{proof} From Corollary $\ref{C4.2},$ it follows that operator ${\mathcal A}$ does not have purely imaginary eigenvalues. However, if $\eta = 0,$ Proposition $\ref{p4.2}$ and item $(a)$ of the Corollary $\ref{C4.4},$ imply that $\sigma({\mathcal A}) \cap i\mathbb{R} = \{0\}.$ In the case of $\eta > 0,$ using item (b) of the Corollary $\ref{C4.4},$, we conclude that $\sigma({\mathcal A})\cap i\mathbb{R} = \emptyset.$ Therefore, in both cases, we can apply the Arendt and Batty Theorem, leading to the desired result. \end{proof}

\section{Polynomial Stability for ${\pmb{\color{black}{\eta}}} \geq 0$}  \label{sec5}

According to \cite{kais,kaisI}, we have the following:

\begin{theorem}
\label{theorem501bis}
For $\eta > 0$, we have for every $(u_0,v_0,u_1,v_1,0,0) \in{\mathcal D}({\mathcal A}),$\begin{align}
\label{501bis}
E(t) \leq \frac{C}{t^{\frac{1}{1 - \alpha}}} \, \left\|(u_0,v_0,u_1,v_1,0,0)\right\|^2_{\mathcal{D}(\mathcal{A})},\quad \forall\,t\geq 0.
\end{align}
\end{theorem}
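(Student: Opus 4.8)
My plan is to derive \eqref{501bis} from a frequency-domain criterion. Since $E(t)=\tfrac12\|e^{t\mathcal A}\mathbb U_0\|_{\mathcal H}^2$ and $\{\mathcal S(t)\}_{t\ge 0}$ is a bounded (contraction) semigroup by Proposition \ref{lu}, the bound \eqref{501bis} is equivalent to $\|e^{t\mathcal A}\mathbb U_0\|_{\mathcal H}\le C\,t^{-\frac{1}{2(1-\alpha)}}\|\mathbb U_0\|_{\mathcal D(\mathcal A)}$. By the Borichev--Tomilov theorem on polynomial stability of semigroups (as used in \cite{kais,kaisI}), this is in turn equivalent, with exponent $\ell=2(1-\alpha)$, to the two requirements $i\mathbb R\subset\rho(\mathcal A)$ and
\[
\limsup_{|\lambda|\to+\infty}\frac{1}{|\lambda|^{2(1-\alpha)}}\left\|(i\lambda I-\mathcal A)^{-1}\right\|_{\mathcal L(\mathcal H)}<+\infty. \quad (\star)
\]
For $\eta>0$ the first requirement is already at hand: Corollary \ref{C4.2} excludes purely imaginary eigenvalues and item (b) of Corollary \ref{C4.4} gives $i\lambda\notin\sigma(\mathcal A)$ for every $\lambda\in\mathbb R$. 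Thus everything reduces to the high-frequency estimate $(\star)$.

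I would prove $(\star)$ by contradiction. If it fails, there exist $\lambda_n\in\mathbb R$ with $|\lambda_n|\to+\infty$ and $\mathbb U_n=(u_n,v_n,U_n,V_n,\varphi_{1,n},\varphi_{2,n})^T\in\mathcal D(\mathcal A)$ with $\|\mathbb U_n\|_{\mathcal H}=1$ such that $\mathbb F_n:=|\lambda_n|^{2(1-\alpha)}(i\lambda_n I-\mathcal A)\mathbb U_n\to 0$ in $\mathcal H$. Writing $(i\lambda_n I-\mathcal A)\mathbb U_n$ componentwise as in \eqref{406} and taking the real part of the $\mathcal H$-inner product with $\mathbb U_n$, the dissipation identity behind \eqref{314} yields
\[
\mathfrak C\int_{\mathbb R}(\xi^2+\eta)\left(\|\varphi_{1,n}(\xi)\|_{L^2(\Omega)}^2+\|\varphi_{2,n}(\xi)\|_{L^2(\Omega)}^2\right)d\xi=\mathrm{Re}\,\langle(i\lambda_n I-\mathcal A)\mathbb U_n,\mathbb U_n\rangle_{\mathcal H}=o\!\left(|\lambda_n|^{-2(1-\alpha)}\right).
\]
Since $\eta>0$, the factor $\xi^2+\eta$ is bounded below by $\eta$, so already $\|\varphi_{1,n}\|,\|\varphi_{2,n}\|\to 0$ in $L^2(\mathbb R;L^2(\Omega))$.

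The decisive step is to recover the velocities. Solving the fifth and sixth equations of \eqref{406} for $\varphi_{j,n}$ exactly as in \eqref{407}--\eqref{408} and inserting the expressions into $\int_{\mathbb R}(\xi^2+\eta)\|\varphi_{j,n}\|^2\,d\xi$, the part carried by $U_n$ (resp. $V_n$) is bounded below by $\|U_n\|_{L^2(\Omega)}^2\,F(\lambda_n,\alpha,\eta)$ (resp. by $\|V_n\|_{L^2(\Omega)}^2\,F(\lambda_n,\alpha,\eta)$), up to source terms controlled by powers of $|\lambda_n|^{-1}$ times $\|\mathbb F_n\|$, where $F(\lambda,\alpha,\eta)$ is the integral introduced in the proof of Lemma \ref{L2.4}. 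A scaling argument in that integral gives the high-frequency asymptotics $F(\lambda_n,\alpha,\eta)\sim c\,|\lambda_n|^{\alpha-1}$. Combining this with the previous step gives $\|U_n\|^2\lesssim |\lambda_n|^{1-\alpha}\,o(|\lambda_n|^{-2(1-\alpha)})=o(|\lambda_n|^{\alpha-1})\to 0$, and likewise $\|V_n\|\to 0$. It is precisely here that the exponent $2(1-\alpha)$ is calibrated: the weakening of the damping at high frequency costs a factor $1/F(\lambda_n,\alpha,\eta)\sim|\lambda_n|^{1-\alpha}$, which must be beaten by the weight $|\lambda_n|^{2(1-\alpha)}$ attached to $\mathbb F_n$.

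It then remains to recover the elastic part. Multiplying the third and fourth equations of \eqref{406} by $\overline{u_n}$ and $\overline{v_n}$, integrating by parts over $\Omega$ and adding, the coercivity of the symmetric positive-definite matrix $A=(a_{ij})$ guaranteed by \eqref{102} controls $\int_\Omega X^TAX\,dx+\alpha\|u_n-v_n\|_{L^2(\Omega)}^2$ in terms of $\|U_n\|^2+\|V_n\|^2$, of the damping contributions $\mathfrak C E_j(\lambda_n,\alpha,\eta)U_n$ (whose multipliers satisfy $|E_j(\lambda_n,\alpha,\eta)|\sim|\lambda_n|^{\alpha-1}\to 0$ by Lemma \ref{L2.4}), and of the vanishing source $\mathbb F_n$; here one uses $u_n=(U_n+f_{1,n})/(i\lambda_n)$ together with Poincar\'e's inequality to keep $u_n,v_n$ bounded. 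Hence $\|\nabla u_n\|,\|\nabla v_n\|\to 0$, and collecting the kinetic, elastic and memory parts gives $\|\mathbb U_n\|_{\mathcal H}\to 0$, contradicting $\|\mathbb U_n\|_{\mathcal H}=1$; this proves $(\star)$ and, through Borichev--Tomilov, the estimate \eqref{501bis}. I expect the recovery of the velocities in the previous paragraph to be the main obstacle: because the multiplier $E(\lambda,\alpha,\eta)$ degenerates like $|\lambda|^{\alpha-1}$, one must extract the \emph{exact} power of $|\lambda|$ lost and verify that the cross and source terms generated by $f_{5,n},f_{6,n}$ are genuinely of lower order.
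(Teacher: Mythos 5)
Your proposal is correct, and the key calibration is right: Borichev--Tomilov with resolvent growth $O(|\lambda|^{2(1-\alpha)})$ gives semigroup decay $t^{-1/(2(1-\alpha))}$, hence $E(t)=\tfrac12\|e^{t\mathcal A}\mathbb U_0\|_{\mathcal H}^2\lesssim t^{-1/(1-\alpha)}\|\mathbb U_0\|^2_{\mathcal D(\mathcal A)}$, and your scaling $\xi=|\lambda|^{1/2}y$ does give $F(\lambda,\alpha,\eta)\sim c\,|\lambda|^{\alpha-1}$, which is exactly the degeneration rate that the weight $|\lambda|^{2(1-\alpha)}$ must beat. However, your route is not the paper's. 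The paper never proves Theorem \ref{theorem501bis} directly: in Section \ref{sec3} it recasts the mixture system in the abstract form \eqref{abstract}, namely $Z_{tt}+AZ+BB^*\partial_t^{\alpha,\eta}Z=0$ with $B=B^*=I_H$ and $A$ the self-adjoint positive operator \eqref{304b}, and then Theorem \ref{theorem501bis} is quoted wholesale from the abstract polynomial-decay results of \cite{kais,kaisI} for that class of systems (just as the $\eta=0$ rate later is obtained by adapting \cite{abbes} via Theorem \ref{TBC}). Your argument --- frequency-domain reduction, contradiction sequence, dissipation identity to kill $\varphi_{j,n}$ (using $\xi^2+\eta\ge\eta$, the one place $\eta>0$ enters besides Corollary \ref{C4.4}(b) giving $i\mathbb R\subset\rho(\mathcal A)$) and recovery of velocities through the integral $F$ of Lemma \ref{L2.4} --- is in effect a self-contained reconstruction of the proof contained in those references, specialized to this system. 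What the paper's route buys is brevity and reuse of a general framework: one only has to check that the system fits the abstract form with a bounded feedback operator. What your route buys is transparency: it makes explicit where the exponent $\frac{1}{1-\alpha}$ comes from and why it degrades as $\alpha\to 1^-$. If you write it out in full, the only step demanding real care is the one you flagged yourself: when bounding $\|U_n\|^2F(\lambda_n,\alpha,\eta)$ from above by the dissipation, the source terms $f_{5,n}$ enter through $\int_{\mathbb R}(\xi^2+\eta)|\xi^2+\eta+i\lambda_n|^{-2}\|f_{5,n}(\xi)\|^2d\xi\le\tfrac{1}{2|\lambda_n|}\|f_{5,n}\|^2$, which is indeed of lower order, so the argument closes.
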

 To determine the stability rate in the case where $\eta=0$, we rely on the following result from \cite[Theorem 8.4]{BCT}:

\begin{theorem}[\cite{BCT}] \label{TBC}
Let $\mathcal{T}(t)$ be a bounded C$_0$-semigroup on a Hilbert space $X$ with generator $H.$ Assume that $\sigma (H) \cap i \mathbb{R} = \left\{ 0\right\}$ and that there exist $\beta \geq 1, \gamma > 0$ such that 
$$
\left\| (i\lambda I - H)^{-1}\right\|_{\mathcal{L}(X)} \leq \left\{
\begin{array}{ll}
O(|\lambda|^{-\beta}), \, \lambda \rightarrow 0, \\
O(|\lambda|^{\gamma}), \, \lambda \rightarrow \infty.
\end{array}
\right.
$$
Then, there exists  a constant $C > 0$ such that  
$$
\left\|\mathcal{T}(t) z \right\|_{X} \leq 
\frac{C}{{\bf t}^{\frac{1}{\max{(\beta,\gamma)}}}} \left\|z\right\|_{{\mathcal D}(H)} , \; \forall \, t > 0, z \in {\mathcal D}(H) \cap R(H),
$$
where ${\mathcal D}(H)$ is the domain of $H$ and $R(H)$ is the range of $H.$
 \end{theorem}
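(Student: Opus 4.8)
The plan is to deduce the stated decay estimate directly from the two resolvent bounds, following the quantified Tauberian philosophy behind the Borichev--Tomilov characterisation but adapted to the fact that here $0$ belongs to the spectrum of $H$. The first step is to localise in frequency: I would fix smooth cut-offs subordinate to a decomposition $\mathbb{R} = \{|s|\le r_0\}\cup\{r_0\le|s|\le R_0\}\cup\{|s|\ge R_0\}$ and analyse the contribution of the low, intermediate and high frequencies separately in the representation of $\mathcal{T}(t)z$ through the boundary values of the resolvent on the imaginary axis. On the intermediate band the hypothesis $\sigma(H)\cap i\mathbb{R}=\{0\}$, together with the continuity of $s\mapsto(isI-H)^{-1}$ away from the origin, gives a uniformly bounded resolvent, so this piece contributes exponentially small terms and may be discarded from the polynomial analysis.

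For the high-frequency region I would invoke the standard Borichev--Tomilov mechanism: the bound $\|(i\lambda I-H)^{-1}\|=O(|\lambda|^{\gamma})$ as $\lambda\to\infty$, combined with boundedness of the semigroup, yields a decay of order $t^{-1/\gamma}$ for data in $\mathcal{D}(H)$. This is precisely the reason the hypothesis requires $z\in\mathcal{D}(H)$ and why the final estimate is measured in the graph norm $\|z\|_{\mathcal{D}(H)}$.

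The genuinely delicate ingredient is the low-frequency region, where the resolvent blows up like $|\lambda|^{-\beta}$. Here the assumption $z\in R(H)$ is essential: writing $z=Hw$ with $w\in\mathcal{D}(H)$ and using the algebraic identity
\begin{equation*}
(i\lambda I-H)^{-1}H = -I + i\lambda\,(i\lambda I-H)^{-1},
\end{equation*}
the extra factor $i\lambda$ converts the singular bound $O(|\lambda|^{-\beta})$ into the behaviour $O(|\lambda|^{1-\beta})$, which after the Fourier/Tauberian inversion produces a decay of order $t^{-1/\beta}$ on $R(H)$. Combining the two regimes, the slower of the two rates dominates, and since the exponents $1/\beta$ and $1/\gamma$ amalgamate into $1/\max(\beta,\gamma)$, one recovers the asserted bound.

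I expect the main obstacle to be the rigorous justification of the integral representation of $\mathcal{T}(t)z$ across the singularity at the origin and the matching of the two local decay rates into a single global one. Concretely, one must control the boundary resolvent uniformly on each dyadic scale and show that the principal-value integral $\frac{1}{2\pi}\int_{\mathbb{R}}e^{ist}(isI-H)^{-1}z\,ds$ is well defined once the compensation by $H$ on $R(H)$ is taken into account. The subtle point is that this compensation is only borderline integrable when $\beta>1$, and moreover passing from the natural estimate in terms of $\|w\|$ to the stated graph-norm bound $\|z\|_{\mathcal{D}(H)}$ requires care on $R(H)$; closing the argument uniformly in $t$ will hinge on a careful cut-off near $\lambda=0$ together with an interpolation between the graph norm and the range compensation.
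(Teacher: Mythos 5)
The first thing to note is that the paper contains no proof of this statement: Theorem \ref{TBC} is quoted, with attribution, from \cite{BCT} (Theorem 8.4 there) and is then used as a black box to obtain the $\eta=0$ rate \eqref{eta}. So there is no ``paper proof'' to match your argument against; the only question is whether your sketch would stand on its own as a proof of the cited result, and it would not.

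The central gap is the object on which your whole argument rests. For a merely bounded $C_0$-semigroup, the principal-value representation $\mathcal{T}(t)z=\frac{1}{2\pi}\int_{\mathbb{R}}e^{i\lambda t}(i\lambda I-H)^{-1}z\,d\lambda$ is not available in any useful sense: under your own hypotheses the boundary resolvent \emph{grows} like $|\lambda|^{\gamma}$ at infinity, so the integral is not even conditionally convergent, and producing a rigorously regularized substitute is precisely the hard analytic content of quantified Tauberian theorems --- it cannot serve as the starting point. The same missing representation undermines the claim that the intermediate band contributes ``exponentially small terms'' (there is no contour to shift), and the high-frequency regime cannot be settled by ``invoking'' Borichev--Tomilov: their theorem assumes $\sigma(H)\cap i\mathbb{R}=\emptyset$, which fails here since $0\in\sigma(H)$, and localizing it with cut-offs is exactly the nontrivial functional-calculus work carried out in \cite{BCT}, not a routine adaptation.

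There is also a structural reason the sketch cannot be complete: it never uses that $X$ is a Hilbert space. The clean rate $t^{-1/\max(\beta,\gamma)}$ is a Hilbertian phenomenon, obtained through Plancherel-type arguments; on general Banach spaces the same resolvent hypotheses only yield logarithmically corrected rates, so any argument that nowhere exploits the Hilbert structure is provably insufficient. Two further quantitative points: with $z=Hw$, the identity $(i\lambda I-H)^{-1}Hw=-w+i\lambda(i\lambda I-H)^{-1}w$ improves the low-frequency bound to $O(|\lambda|^{1-\beta})$, which is non-integrable whenever $\beta\geq 2$ (not merely ``borderline'' for $\beta>1$); and even where it is integrable, integrability gives boundedness of a regularized integral, not a $t^{-1/\beta}$ decay rate --- a rate always needs an additional mechanism. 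Finally, the obstacle you flag at the end is real and your outline does not resolve it: a low-frequency estimate in terms of $\|w\|$ does not convert into the stated bound in $\|z\|_{\mathcal{D}(H)}$, since $R(H)$ need not be closed and $\|w\|$ is not controlled by the graph norm of $z=Hw$. Your sketch correctly identifies why both $z\in\mathcal{D}(H)$ and $z\in R(H)$ appear, but none of the analytic steps is closed; for the purposes of this paper the sound options are the one the authors took --- citing \cite{BCT} --- or reproducing the fine-scales argument in full.
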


Based on Theorem \ref{TBC}, a simple adaptation of the proofs of \cite[Theorems 5.8 and 5.9]{abbes} (see also \cite{kais,kaisI}) leads to the following stability result: 

 \begin{proposition}
If $\eta=0$, then there exists a $C > 0$ such that 
\begin{equation}
\label{eta}
\left\|e^{{\bf t}\mathcal{A}} U_0\right\|_{\mathcal{H}}  \leq 
\frac{C}{\sqrt{{\bf t}}} \left\|U_0\right\|_{\mathcal{D}(\mathcal{A})}, \, \forall \, {\bf t} >  0, \, U_0 \in {\mathcal D}(\mathcal{A}) \cap R(\mathcal{A}),
\end{equation}
where $R(\mathcal{A})$ is the range of $\mathcal{A}.$
 \end{proposition}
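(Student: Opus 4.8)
The plan is to deduce the estimate \eqref{eta} from the quantified Tauberian Theorem \ref{TBC}, exactly as the surrounding text suggests. In the regime $\eta = 0$, Proposition \ref{p4.2} together with item $(a)$ of Corollary \ref{C4.4} shows that $\sigma(\mathcal{A}) \cap i\mathbb{R} = \{0\}$, so the spectral hypothesis of Theorem \ref{TBC} holds and the origin is the only obstruction on the imaginary axis; this is precisely why the conclusion is stated on $\mathcal{D}(\mathcal{A}) \cap R(\mathcal{A})$ rather than on all of $\mathcal{D}(\mathcal{A})$. It then suffices to produce exponents $\beta \geq 1$ and $\gamma > 0$ with $\max(\beta,\gamma) = 2$ such that
\begin{equation*}
\left\| (i\lambda I - \mathcal{A})^{-1}\right\|_{\mathcal{L}(\mathcal{H})} = O(|\lambda|^{-\beta}) \ \text{ as } \lambda \to 0, \qquad \left\| (i\lambda I - \mathcal{A})^{-1}\right\|_{\mathcal{L}(\mathcal{H})} = O(|\lambda|^{\gamma}) \ \text{ as } |\lambda| \to \infty.
\end{equation*}
Theorem \ref{TBC} would then yield the decay rate $t^{-1/\max(\beta,\gamma)} = t^{-1/2}$, which is exactly \eqref{eta}.

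Both resolvent bounds are to be obtained by the standard frequency-domain multiplier method applied to the resolvent system $(i\lambda I - \mathcal{A})\mathbb{U} = \mathbb{F}$, i.e.\ to the system \eqref{406} already written out in the proof of Proposition \ref{P4.3}. The starting point is the dissipation identity: taking the real part of $\langle (i\lambda I - \mathcal{A})\mathbb{U}, \mathbb{U}\rangle_{\mathcal{H}}$ and using \eqref{314} gives
\begin{equation*}
\mathfrak{C}\int_{\mathbb{R}} \xi^{2} \left(\|\varphi_{1}(\xi)\|_{L^{2}(\Omega)}^{2} + \|\varphi_{2}(\xi)\|_{L^{2}(\Omega)}^{2}\right) d\xi = \mathrm{Re}\,\langle \mathbb{F}, \mathbb{U}\rangle_{\mathcal{H}} \leq \|\mathbb{F}\|_{\mathcal{H}}\,\|\mathbb{U}\|_{\mathcal{H}},
\end{equation*}
so the weighted norm of the memory variables is controlled by $\|\mathbb{F}\|\,\|\mathbb{U}\|$. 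The remaining task is to absorb the mechanical energy: one solves \eqref{406}$_{5,6}$ for $\varphi_{j}$ as in \eqref{407}, substitutes the coupling integrals $\mathfrak{C}\int_{\mathbb{R}}\mu(\xi)\varphi_{j}\,d\xi$ expressed through $E_{j}(\lambda,\alpha,\eta)$ as in \eqref{408}, and tests the reduced second-order equations \eqref{409} against suitable multipliers to recover the elliptic energy $\int_{\Omega} X^{T} A X\,d{\pmb{\color{black}{x}}}$ and the kinetic terms $\|U\|^{2},\|V\|^{2}$. The finiteness and the asymptotics of $E_{j}$, $F$, $G$ supplied by Lemma \ref{L2.4}, together with Lemmas \ref{L2.3}--\ref{L2.m}, are what render these integrals tractable and produce the powers of $|\lambda|$.

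The hard part will be the low-frequency bound near $\lambda = 0$, and it is genuinely special to the case $\eta = 0$. For $\eta > 0$ the shift $\xi^{2} + \eta$ stays bounded away from $0$, the integrals $E_{j}(\lambda,\alpha,\eta)$ remain uniformly finite, and the resolvent is uniformly bounded near the origin; this is the situation $\sigma(\mathcal{A})\cap i\mathbb{R} = \emptyset$ that produced the faster rate of Theorem \ref{theorem501bis}. For $\eta = 0$, however, the origin belongs to the spectrum (Proposition \ref{p4.2}), and the degeneration is governed by the singular scaling $E_{j}(\lambda,\alpha,0) \sim c_{\alpha}\,|\lambda|^{\alpha-1}$ as $\lambda \to 0$, obtained from the change of variables $\xi = |\lambda|^{1/2}\sigma$ in the definition of $E_{j}$. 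Tracking how this singularity propagates through \eqref{407}--\eqref{409}, and in particular estimating $\|\varphi_{j}\|_{L^{2}(\mathbb{R};L^{2}(\Omega))}$ for data in $R(\mathcal{A})$, is where the exponent $\beta$ is pinned down; the range restriction is essential precisely because it rules out the non-$L^{2}$ resonant direction exhibited in Proposition \ref{p4.2}. I expect this step to force $\beta = 2$.

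The high-frequency bound as $|\lambda|\to\infty$ is the more routine of the two and parallels the estimates behind Theorem \ref{theorem501bis}: since $\eta$ is a lower-order shift that is negligible compared with $\xi^{2}$ as $|\xi|\to\infty$, the large-$|\lambda|$ analysis is essentially independent of whether $\eta$ vanishes, and the same contradiction/multiplier argument from \cite{abbes,kais,kaisI} applies, yielding the bound $O(|\lambda|^{2(1-\alpha)})$, i.e.\ $\gamma = 2(1-\alpha) \in (0,2)$. Since $\max(\beta,\gamma) = \max(2,\,2(1-\alpha)) = 2$, invoking Theorem \ref{TBC} completes the proof with the rate $t^{-1/2}$.
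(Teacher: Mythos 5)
Your overall route is the same one the paper takes: the paper gives no self-contained argument for this proposition, it simply invokes Theorem \ref{TBC} (with the spectral condition $\sigma(\mathcal{A})\cap i\mathbb{R}=\{0\}$ coming from Proposition \ref{p4.2} and Corollary \ref{C4.4}(a)) and defers both resolvent estimates to ``a simple adaptation'' of \cite[Theorems 5.8--5.9]{abbes} and \cite{kais,kaisI}. Your proposal reproduces exactly this scheme, so the strategy is not in question; the problems are in how you say the two resolvent bounds would be obtained.

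First, you misplace the role of $R(\mathcal{A})$. The hypothesis of Theorem \ref{TBC} is a bound on the operator norm $\left\|(i\lambda I-\mathcal{A})^{-1}\right\|_{\mathcal{L}(\mathcal{H})}$, i.e.\ an estimate for \emph{arbitrary} data $\mathbb{F}\in\mathcal{H}$: for $\lambda\neq 0$ the resolvent already exists as a bounded operator on all of $\mathcal{H}$ by Corollary \ref{C4.4}(a), so there is no ``non-$L^{2}$ resonant direction'' to rule out at the level of the estimates, and the restriction to $\mathcal{D}(\mathcal{A})\cap R(\mathcal{A})$ enters only in the \emph{conclusion} of Theorem \ref{TBC}. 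Proposition \ref{p4.2} is used solely to show $0\in\sigma(\mathcal{A})$, which is why one must use Theorem \ref{TBC} instead of the usual Borichev--Tomilov theorem; it plays no role in deriving the low-frequency bound. Second, neither exponent is actually established in your text, and your assessment of where the difficulty lies is inverted. Nothing ``forces'' $\beta=2$, and the low-frequency bound is not the hard part: for small $|\lambda|$ the system \eqref{409} is a dissipative perturbation of size $O(|\lambda|^{\alpha})$ of the coercive elliptic system (hence uniformly invertible), the source terms built from $E_{j}$ and $f_{5},f_{6}$ are $O(|\lambda|^{\alpha-1})$ and $O(|\lambda|^{(\alpha-2)/2})$ by Lemma \ref{L2.4} and Cauchy--Schwarz, and $|\xi^{2}+i\lambda|\geq|\lambda|$ controls the memory components in \eqref{407}; this elementary bookkeeping already gives $\left\|(i\lambda I-\mathcal{A})^{-1}\right\|_{\mathcal{L}(\mathcal{H})}=O(|\lambda|^{-1})$, so any $\beta\in[1,2]$ is available a fortiori (and this is harmless, since Theorem \ref{TBC} only needs $\max(\beta,\gamma)\leq 2$). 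The genuinely hard input is the high-frequency bound $O(|\lambda|^{2(1-\alpha)})$, which you, exactly like the paper, import from the cited references without proof; your argument therefore stands or falls with that citation, which should be stated as the load-bearing step rather than described as ``the more routine of the two.''
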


\section{Numerical Approximation}

In this section, we will verify numerically the polynomial rate of decay obtained in the previous
section. For simplicity, we consider numerical examples in dimension
$n=2$. The real case
$n=3$ is merely a general physical description that, due to symmetry, can be reduced to two dimensions
$\Omega=(0,l_1)\times(0,l_2)$.

\subsection{Finite Volume Approximation}
We consider the finite volume method (FVM) for spatial discretization of the variables 
$u=u({\pmb{\color{black}{x}}},t)$ and 
$v=v({\pmb{\color{black}{x}}},t)$
with ${\pmb{\color{black}{x}}}=(x,y)$, based on a discretization of finite differences of flux \cite{Eymard}. In this sense, 
let $\mathcal{T}=\bigcup\limits_{i,j}K_{ij}$ be a uniform discretization of the rectangle 
$\Omega=(0,l_1)\times(0,l_2)$
in small $I\times J$ control volumes 
$K_{ij}=(x_{i-\frac{1}{2}},x_{i+\frac{1}{2}})\times
(y_{j-\frac{1}{2}},y_{j+\frac{1}{2}})$,
with $x_{i+\frac{1}{2}}=i\delta x$, 
$y_{j+\frac{1}{2}}=j\delta y$,
$\delta x = \dfrac{l_1}{I}$, $i=0,\ldots,I$
$\delta y = \dfrac{l_2}{J}$, $j=0,\ldots,J$.
The unknowns $u(x,y,t)$ and $v(x,y,t)$, are approximated by $\mathbf{u}=u_{ij}(t)$ and 
$\mathbf{v}=v_{ij}(t)$ respectively in the control volume $K_{ij}$. 
Given the uniformity of the mesh, the diffusion term is approximated by
$$
\Delta u(x_i,y_j) \approx 
\left(\mathbf{D}^2 \mathbf{u}\right)_{ij}
=
\dfrac{u_{i+1,j}-2u_{i,j}
+u_{i-1,j}}{\delta x^2}
+
\dfrac{u_{i,j+1}-2u_{i,j}
+u_{i,j-1}}{\delta y^2}
$$
with $u_{0,j}=u_{i,0}=u_{i,J+1}=u_{I+1,j}=
u_{0,0}=u_{I+1,0}=u_{0,J+1}=u_{I+1,J+1}=0$,
$i=1,\ldots,I$, $j=1,\ldots,J$, 

Integrating the system \eqref{105}
into the control volume $K_{ij}$
and taking into account the previous approximation of the Laplacian operator,
we deduce the following numerical scheme
\begin{equation}
\begin{cases}
    \rho_1 \Ddot{\mathbf{u}} - a_{11} \mathbf{D}^2 \mathbf{u} - a_{12} \mathbf{D}^2 \mathbf{v}
    +\alpha (\mathbf{u}-\mathbf{v}) 
    + \mathbf{D}^{\alpha,\eta} \mathbf u =0, \quad t > 0,\\
     \rho_2 \Ddot{\mathbf{u}} - a_{21} \mathbf{D}^2 \mathbf{u} - a_{22} \mathbf{D}^2 \mathbf{v}
    -\alpha (\mathbf{u}-\mathbf{v}) 
    + \mathbf{D}^{\alpha,\eta} \mathbf v =0,\quad t > 0,\\
    \mathbf{D}^{\alpha,\eta}{\mathbf{u}}(t) =  \mathfrak{C}\int_{\mathbb{R}}\mu(\xi)
{\pmb{\color{black}{\phi}}}(t,\,\xi)d\xi,\quad t > 0,\\
\mathbf{D}^{\alpha,\eta}{\mathbf{v}}(t) =  \mathfrak{C}\int_{\mathbb{R}}\mu(\xi)
{\pmb{\color{black}{\psi}}}(t,\,\xi)d\xi,\quad t > 0,\\
\pmb{\color{black}{\phi}}_{t}(t,\,\xi) + |\xi|^{2}\pmb{\color{black}{\phi}}(t,\,\xi) =
\mu(\xi)\mathbf{u}(t),\quad \xi\in\mathbb{R},\quad t > 0,\\
\pmb{\color{black}{\psi}}_{t}(t,\,\xi) + |\xi|^{2}\pmb{\color{black}{\psi}}(t,\,\xi) =
\mu(\xi)\mathbf{v}(t),\quad \xi\in\mathbb{R},\quad t > 0,
\end{cases}
\label{scheme}
\end{equation}
where 
$\mathbf{\Phi}=[{\pmb{\color{black}{\phi}}}(\xi,t),{\pmb{\color{black}{\psi}}}(\xi,t)]^T$ is
an 
approximation of the
solutions of \eqref{209}$_3$ and \eqref{209}$_4$.

\subsection{Linear equations of Motion}
    Let the vector 
$\mathbf{U}=[\mathbf{u}(t), \mathbf{v}(t)]^\top$ is
an 
approximation of $[u,v]^\top$
in $\mathbb{R}^{2IJ}$.
Considering \eqref{scheme}, we have the following system of equations of motion
\begin{equation}
	\label{LEM}
	\mathbf{M}\ddot{\mathbf{U}}(t)+
	\mathbf{K}{\mathbf{U}}(t)+
	\mathbf{C}\overset{\alpha,\eta}{\mathbf{U}}(t)
	=0
\end{equation}
where $\mathbf{M}$ is the $2IJ$ densities matrix
given by:
\begin{equation}
    \label{M}
\mathbf{M}=
\begin{pmatrix}
\rho_1\mathbf{I}_{IJ\times IJ} &  \mathbf{O}_{IJ\times IJ}\\
 \mathbf{O}_{IJ\times IJ} &  \rho_2\mathbf{I}_{IJ\times IJ}
\end{pmatrix},
\end{equation}
with $\mathbf{I}_{IJ\times IJ}$ and $\mathbf{O}_{IJ\times IJ}$ the identity and null matrices respectively of $\mathbb{R}^{IJ}$; 
The stiffness matrix $\mathbf{K}$ is given by
\begin{equation}
    \label{K}
\mathbf{K}=
\begin{pmatrix}
-a_{11} \mathbf{D}^2 & -a_{12} \mathbf{D}^2\\
-a_{21} \mathbf{D}^2 & -a_{22} \mathbf{D}^2
\end{pmatrix},
\end{equation}
where
$\mathbf{C}\overset{\alpha,\eta}{\mathbf{U}}(t)= D^{\alpha,\eta}{\mathbf{U}}(t)$
is the generalized Caputo fractional derivative defined in \eqref{202}. 

\subsection{Time discretization}

In order to preserve the energy with a second order scheme in time, we choose a $\beta$-Newmark scheme for $w$.
The method  consists of updating
the displacement, velocity and acceleration vectors
 at the current time $t^n=n\delta t$ to the time $t^{n+1} = (n+1)\delta t$, a small time interval
 $\delta t$
later.
The Newmark algorithm \cite{Newmark}
is based on a set of two relations expressing the forward displacement
$\mathbf{U}^{n+1}$
 and velocity
$\dot{\mathbf{U}}^{n+1}$  in terms of their
current values and the forward and current values of the acceleration:
\begin{eqnarray}
\dot{\mathbf{U}}^{n+1} &=& \dot{\mathbf{U}}^{n} + (1-\widetilde{\gamma})\delta t\,\ddot{\mathbf{U}}^{n} + \widetilde{\gamma}\delta t\,\ddot{\mathbf{U}}^{n+1},\label{702}\\
{\mathbf{U}}^{n+1} &=& {\mathbf{U}}^{n} + \delta t \dot{\mathbf{U}}^{n} + \left(\frac{1}{2}-\widetilde{\beta}\right)\delta t^ 2\,
\ddot{\mathbf{U}}^{n} + \widetilde{\beta}\delta t^2\,\ddot{\mathbf{U}}^{n+1},\label{703}
\end{eqnarray}
where $\widetilde{\beta}$ and $\widetilde{\gamma}$ are parameters of the methods that will be fixed later.
Replacing \eqref{702}--\eqref{703} in the equation of motion \eqref{LEM}, we obtain
\begin{equation}
      \left(
    \mathbf{M}
    +  \widetilde{\beta}\delta t^2\,
    \mathbf{K}
\right)
\ddot{\mathbf{U}}^{n+1}
+ 
	\mathbf{C} \halfscript{\overset{\alpha,\eta}{\mathbf{U}}}{n+1}
=
 -
  \mathbf{K}
\left( \mathbf{U}^n+\delta t \dot{\mathbf{U}}^n + \left(\dfrac{1}{2}- \widetilde{\beta}\right) \delta t^2 \ddot{\mathbf{U}}^n\right).
 \label{NewmarkW}
\end{equation}

\subsection{Approximation of fractional derivatives}

Let $\xi_\ell:=\ell\delta \xi$
 $\ell=1,\ldots,M$, 
 $\delta\xi=M/R$.
From \eqref{203}, we define
\begin{eqnarray*}
\label{629}\mu_\ell = |\xi_\ell|^{(2\alpha - 1)/2},\quad
\ell=1,\ldots,M,\quad 0 < \alpha < 1.
\end{eqnarray*}
Thus, an approximation of the integral \eqref{208}, is given by
\begin{eqnarray}
\label{631}
\mathbf{C}\halfscript{\overset{\alpha,\eta}{\mathbf{U}}}{n}\approx
2\mathfrak{C}  \displaystyle\sum_{\ell=1}^M\mu_\ell \mathbf{\Phi}^{n}_\ell\delta\xi.
\end{eqnarray}
On the other hand, the system \eqref{scheme}$_5$-\eqref{scheme}$_6$ can be discretized using the Crank–Nicolson method \cite{Crank}, in order to maintain the conservation of energy, or its nondecrease in case of dissipation.
Combining it with the Newmark scheme \eqref{NewmarkW}
we obtain
the following  conservative numerical scheme:
\begin{equation}
    \begin{cases}
      \left(
    \mathbf{M}
    +  \widetilde{\beta}\delta t^2\,
    \mathbf{K}
\right)
\ddot{\mathbf{U}}^{n+1}
+ 
	2\mathfrak{C}  \displaystyle\sum_{\ell=1}^M\mu_\ell \mathbf{\Phi}^{n+1}_\ell\delta\xi
=
 -
  \mathbf{K}
\left( \mathbf{U}^n+\delta t \dot{\mathbf{U}}^n + \left(\dfrac{1}{2}- \widetilde{\beta}\right) \delta t^2 \ddot{\mathbf{U}}^n\right),\\
\mathbf{\Phi}^{n+1}_\ell = \mathbf{\Phi}^{n}_\ell
- \delta t \left(\xi_\ell^2 +\eta\right) \mathbf{\Phi}^{n+\frac{1}{2}}_\ell
+ \delta t \mu_\ell  \dot{\mathbf{U}}^{n+\frac{1}{2}},
\end{cases}
 \label{ConservNumerical}
\end{equation}
where 
$\mathbf{\Phi}^{n+\frac{1}{2}}_\ell
=\dfrac{\mathbf{\Phi}^{n}_\ell+\mathbf{\Phi}^{n+1}_\ell}{2}
$.
Using  \eqref{702} again and replacing in 
\eqref{ConservNumerical}$_2$,
we can rewrite \eqref{ConservNumerical}
in the following more explicit and computable way:
\begin{equation}
    \begin{cases}
      \left(
    \mathbf{M}
    +  \widetilde{\gamma}\delta t\,
    \mathbf{C}_{augm}
    +  \widetilde{\beta}\delta t^2\,
    \mathbf{K}
\right)
\ddot{\mathbf{U}}^{n+1}
=
-
	2\mathfrak{C}  \displaystyle\sum_{\ell=1}^M\widetilde{\mu}_\ell \mathbf{\Phi}^{n}_\ell\delta\xi
 -\mathbf{C}_{augm}\left(
2\dot{\mathbf{U}}^n + \left( 1 - \widetilde{\gamma} \right)
\delta t \ddot{\mathbf{U}}^n
\right)
  \\
\qquad\qquad\qquad\qquad\qquad\qquad\qquad - 
\mathbf{K}
\left( \mathbf{U}^n+\delta t \dot{\mathbf{U}}^n + \left(\dfrac{1}{2}- \widetilde{\beta}\right) \delta t^2 \ddot{\mathbf{U}}^n\right),
\\
\mathbf{\Phi}^{n+1}_\ell = \dfrac{2-\delta t\left(\xi_\ell^2 +\eta\right)}{2+\delta t\left(\xi_\ell^2 +\eta\right)} \mathbf{\Phi}^{n}_\ell
+ 
\dfrac{2\delta t \mu_\ell }{2+\delta t\left(\xi_\ell^2 +\eta\right)}
\dot{\mathbf{U}}^{n+\frac{1}{2}},
\end{cases}
 \label{ExplicitWay}
\end{equation}
where  $\widetilde{\mu}_\ell=\dfrac{2-\delta t\left(\xi_\ell^2 +\eta\right)}{2+\delta t\left(\xi_\ell^2 +\eta\right)}
\mu_\ell$ and $\mathbf{C}_{augm}=\delta t \mathfrak{C}  \left(\displaystyle\sum_{\ell=1}^M
\dfrac{2{\mu}_\ell^2\delta\xi}{2+\delta t\left(\xi_\ell^2 +\eta\right)}\right)\textbf{I}_{2IJ} $.

\subsection{Decay of the discrete energy}

Evaluating \eqref{ConservNumerical}$_1$ in $t=t_{n+\frac{1}{2}}$, multiplying 
by $\ddot{\mathbf{U}}^{n+\frac{1}{2}}$, and summing
\eqref{ConservNumerical}$_2$ multiplied by 
$\mathfrak{C}\mathbf{\Phi}^{n+\frac{1}{2}}_\ell$, we obtain that
\begin{eqnarray}
\left[ E_\Delta \right]_n^{n+1} &:=&
 \left[ 
 \dfrac{1}{2} \dot{\mathbf{U}}^T  \mathbf{M} \dot{\mathbf{U}}
 +
  \dfrac{1}{2} {\mathbf{U}}^T  \mathbf{K} {\mathbf{U}}
  +
 \dfrac{\mathfrak{C}}{2}  \displaystyle\sum_{\ell=1}^M {\mu}_\ell \left|\mathbf{\Phi}_\ell\right|^2
  \right]_n^{n+1}\\
  &=&\nonumber
  - \ \mathfrak{C}
   \displaystyle\sum_{\ell=1}^M  \left(\xi_\ell^2 +\eta\right) \left|\mathbf{\Phi}^{n+\frac{1}{2}}_\ell\right|^2
   \label{ener_Mbodje}
\end{eqnarray}
which is consistent with the estimates \eqref{313}--\eqref{314}, and constitutes a correct approximation of the energy and its decreasing behavior.

\subsection{Example 1: Interaction of two waves with and without the dispative fractional derivative term.}
In this first example, we interact two waves each coming from a different side of the domain for each of the compounds in the mixture. The idea is to highlight the effect of the dissipative fractional derivative term, and to do so, we compare two simulations: one with dissipation and the other without dissipation, that is, the same system but without the fractional derivative terms.

\begin{figure*}[hbt]
    \centering
    \begin{subfigure}[b]{0.5\textwidth}
        \centering
        \includegraphics[scale=0.5]{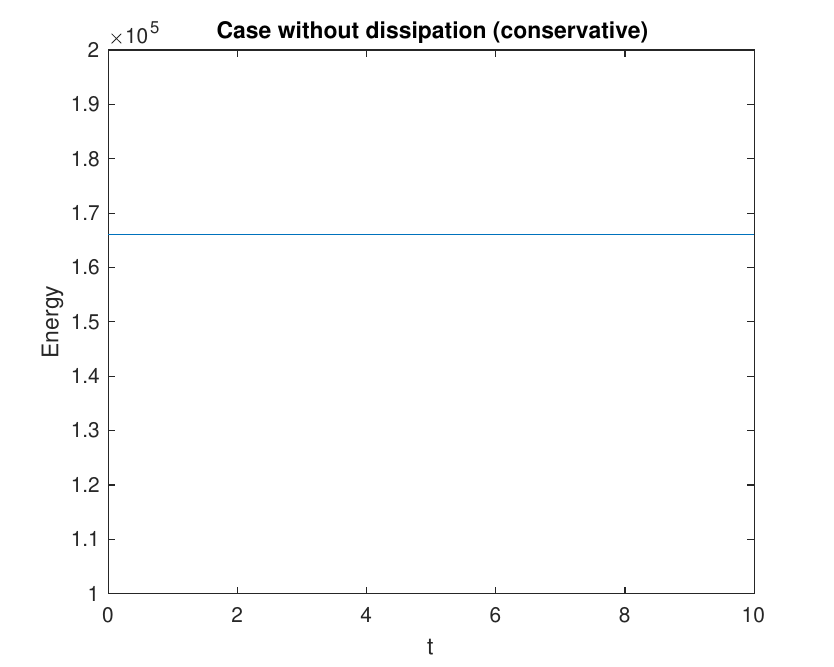}
        \caption{Energy without dissipation}
    \end{subfigure}%
    \begin{subfigure}[b]{0.5\textwidth}
        \centering
        \includegraphics[scale=0.38]{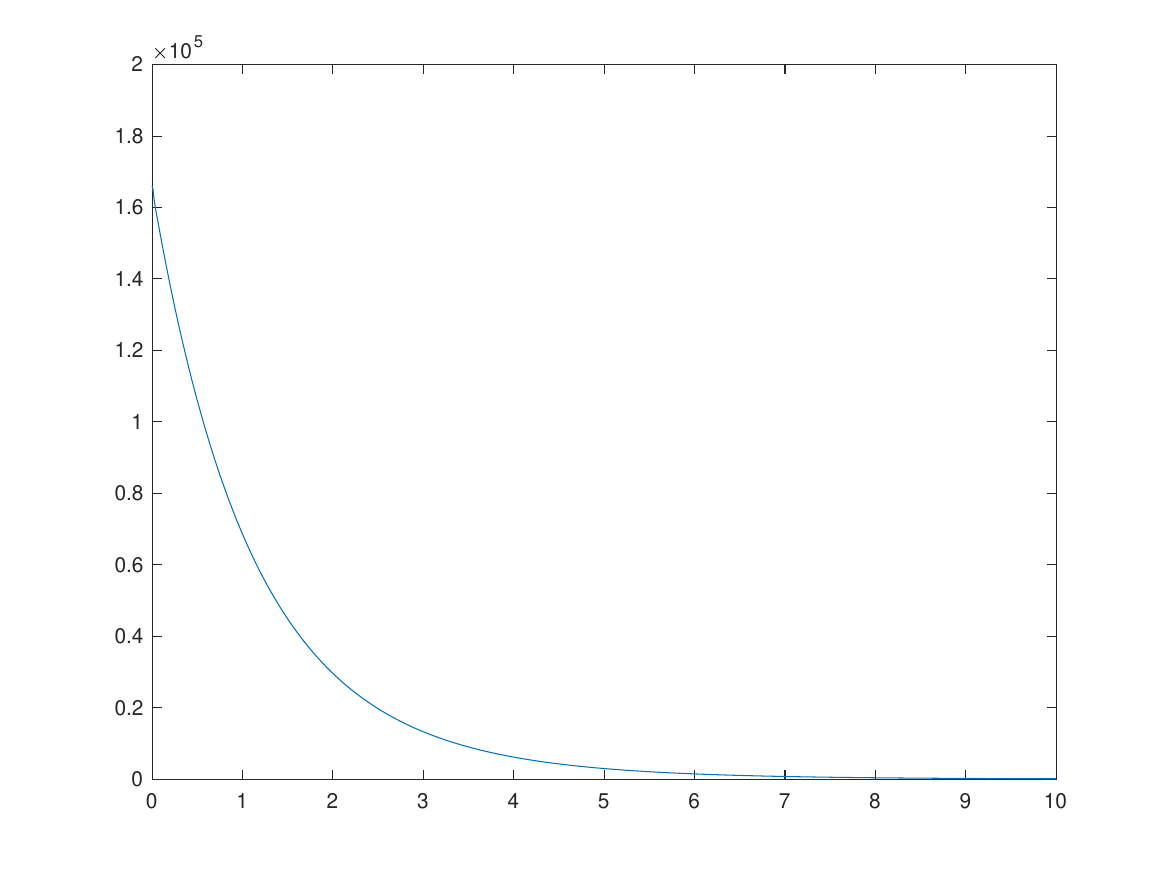}
        \caption{Energy with fractional derivative dissipation}
    \end{subfigure}    
    \caption{Numerical comparison of energy with and without dissipative term of fractional derivative.}
\end{figure*}

We consider the numerical scheme
of the mixture system in the square $\Omega=(0,1)^2$ with the initial 
condition
\begin{align*}
 &   u_{0}({\pmb{\color{black}{x}}})
=
\dfrac{1}{\sigma\sqrt{2}}e^{-\dfrac{(x-\frac{1}{2})^2+y^2}{2\sigma^2}},
&
  u_{1}({\pmb{\color{black}{x}}})
=0,\\
 &   v_{0}({\pmb{\color{black}{x}}})
=
\dfrac{1}{\sigma\sqrt{2}}e^{-\dfrac{x^2+(y-\frac{1}{2})^2}{2\sigma^2}},
&
  v_{1}({\pmb{\color{black}{x}}})
=0,
\end{align*}
with $\sigma=0.01$,
which represent two initial Gaussians, one on the $x-$axis for the compound $u({\pmb{\color{black}{x}}})$, and the other on the $y-$axis, for the compound $v({\pmb{\color{black}{x}}})$.

The system parameters are in this case, densities $\rho_1=\rho_2=1$, a matrix $A$ of coefficients $a_{11}=0.1$,
$a_{12}=-0.5$, $a_{21}=-0.5$ and $a_{22}=0.1$ (which is positive symmetric), and finally a coupling coefficient $\alpha=1.0$.
We run a simulation with $I=J=200$, $M=1000$, and $\delta\xi=0.1$. On the other hand, we simulate up to a final time $T=10$ and $\delta t=0.01$.

In Figure 1, the numerical behavior of the energy can be observed: the graph on the left shows the energy without dispassion terms, which remains constant as predicted by the theory and the numerical scheme \eqref{ener_Mbodje}, with its right side zero instead of the term that multiplies $\mathfrak{C}$; the graph on the right, on the other hand, shows a decreasing behavior of the energy.

\begin{figure*}[hbt]
    \centering
    \begin{subfigure}[b]{0.5\textwidth}
        \centering
        \includegraphics[scale=0.5]{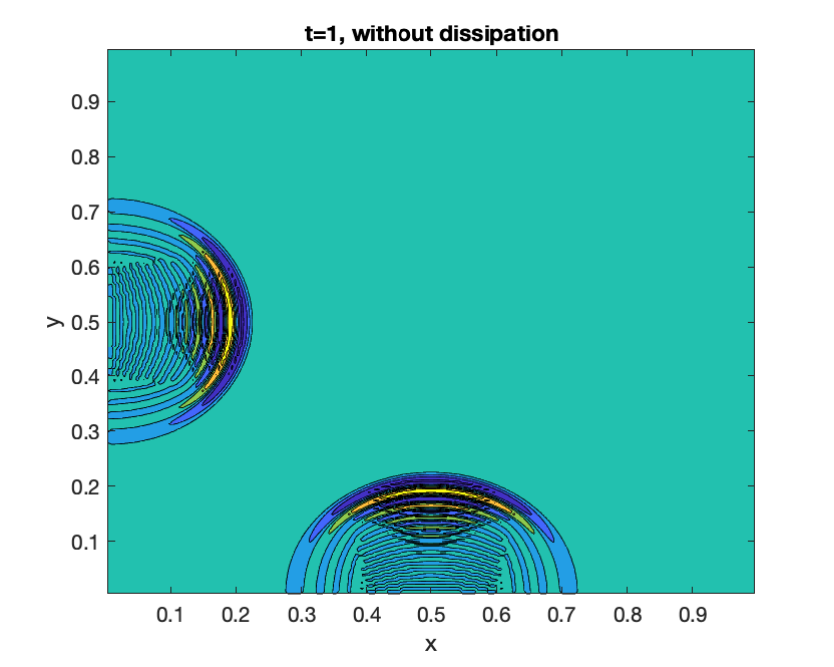}
        \caption{$\mathbf{u}+\mathbf{v}$ without dissipation, $t=1$}
    \end{subfigure}%
    \begin{subfigure}[b]{0.5\textwidth}
        \centering
        \includegraphics[scale=0.5]{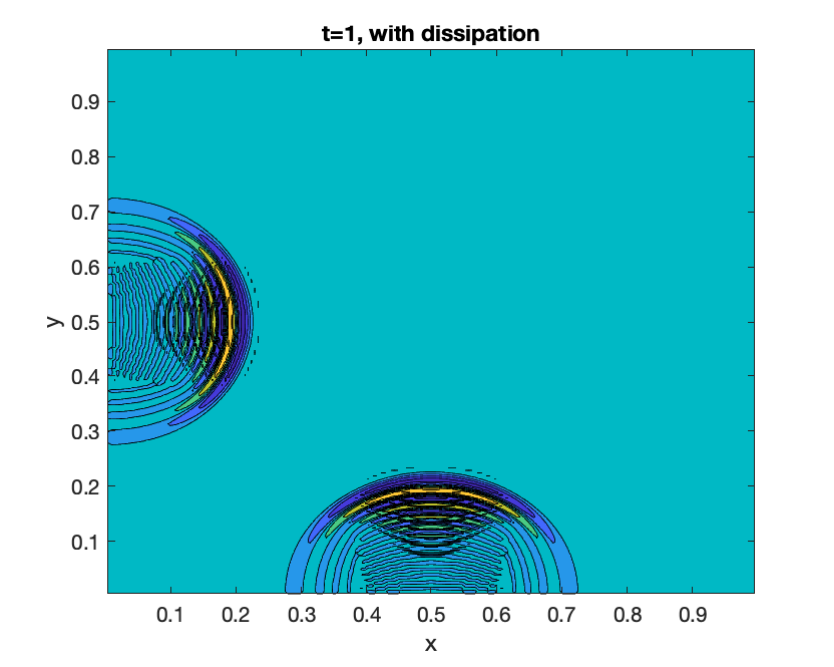}
        \caption{$\mathbf{u}+\mathbf{v}$ with dissipation, $t=1$}
    \end{subfigure}
    \\
        \begin{subfigure}[b]{0.5\textwidth}
        \centering
        \includegraphics[scale=0.5]{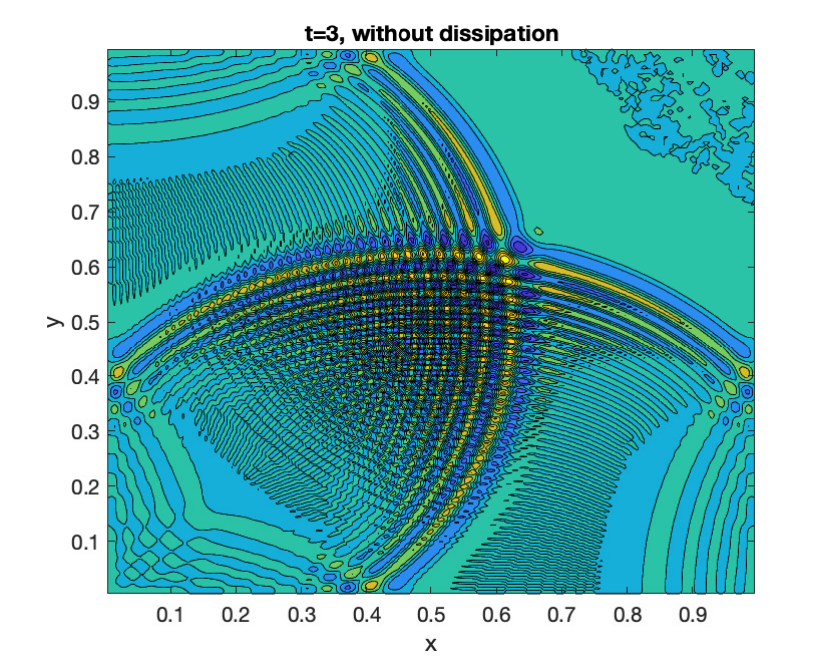}
        \caption{$\mathbf{u}+\mathbf{v}$ without dissipation, $t=3$}
    \end{subfigure}%
    \begin{subfigure}[b]{0.5\textwidth}
        \centering
        \includegraphics[scale=0.5]{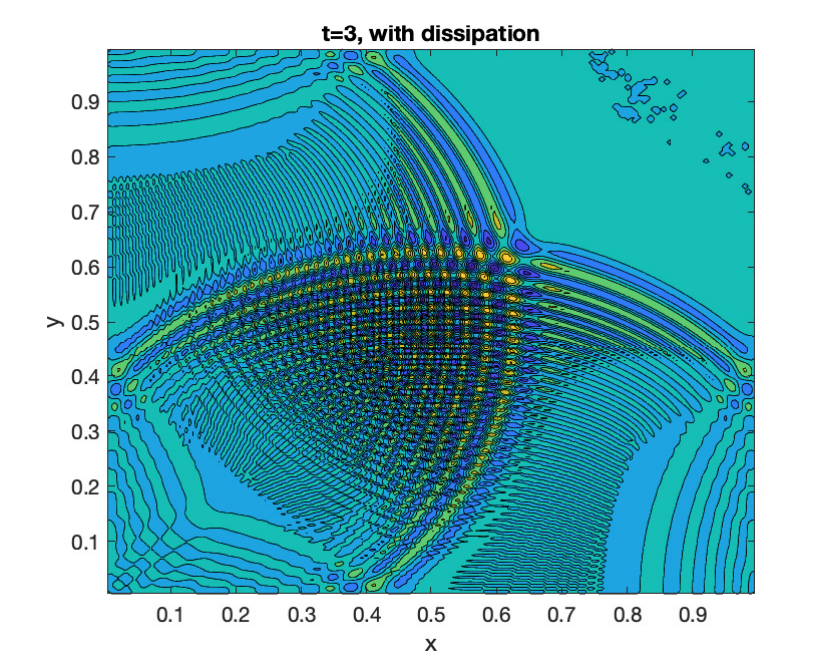}
        \caption{$\mathbf{u}+\mathbf{v}$ with dissipation, $t=3$}
    \end{subfigure}
\\
    \begin{subfigure}[b]{0.5\textwidth}
        \centering
        \includegraphics[scale=0.5]{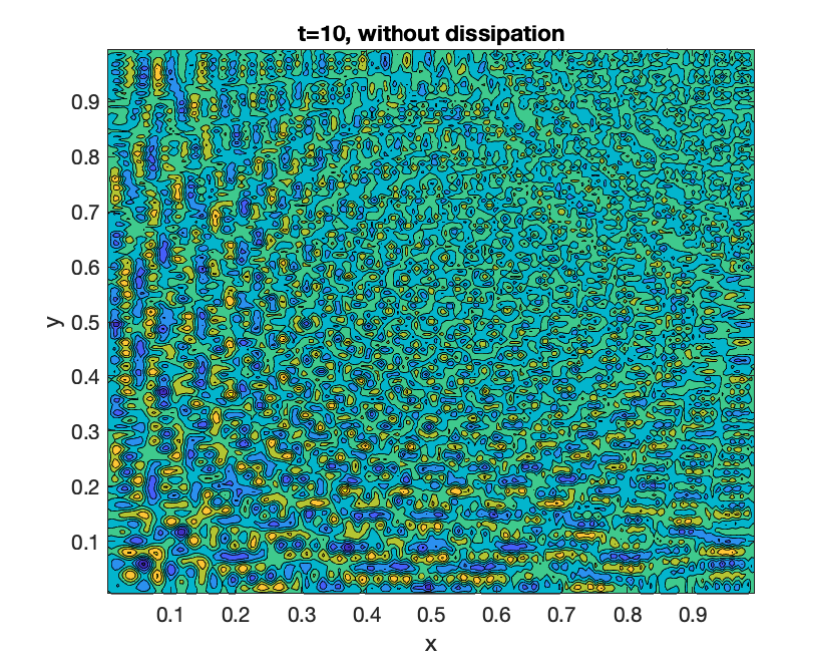}
        \caption{$\mathbf{u}+\mathbf{v}$ without dissipation, $t=10$}
    \end{subfigure}%
    \begin{subfigure}[b]{0.5\textwidth}
        \centering
        \includegraphics[scale=0.5]{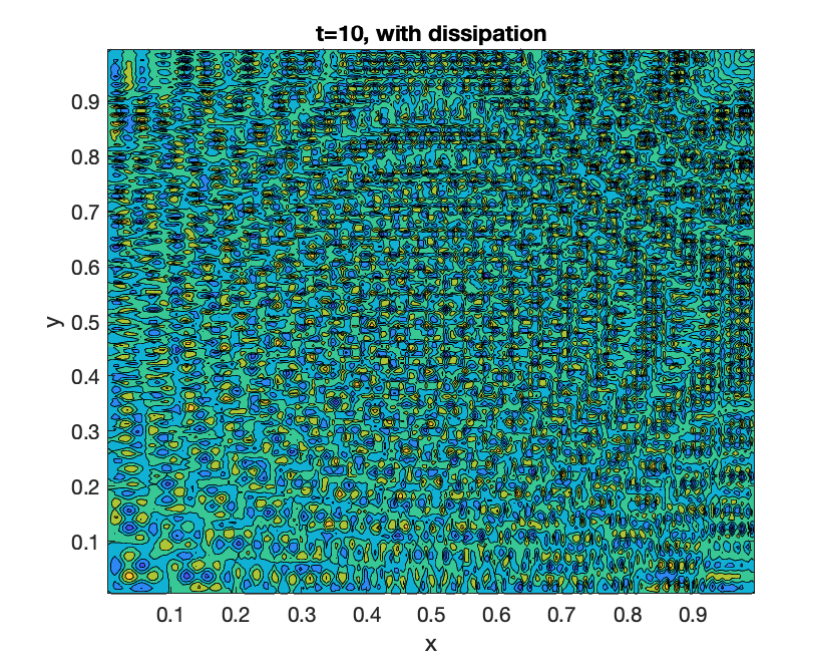}
        \caption{$\mathbf{u}+\mathbf{v}$ with dissipation, $t=10$}
    \end{subfigure}
    \caption{Behavior of the sum of both compounds u+v (mixture), for different times without dissipation (graphs on the left), and with dissipation (graphs on the right).}
\end{figure*}

In Figure 2, the interaction of the waves can be observed both with and without dispersion. From a qualitative perspective, the behaviors are very similar (the differences between the figures on the left and right are subtle to the naked eye), with only the behavior of the energy marking the difference between the two.

\subsection{Example 2: Numerical
spectrum of operator $A$.}
In this subsection, the aim is to visualize the spectrum of the operator 
$A$, defined in \eqref{304} and approximated by the numerical scheme \eqref{ExplicitWay}, that is, by the conservative finite difference matrix derived from the augmented model. Considering the conservative scheme \eqref{ConservNumerical}$_2$ for approximating the variables 
$\varphi_1$ and $\varphi_2$, an 
$NxN$ matrix is obtained, where 
$N=2(M+2)IJ$, defined by:$$
\textbf{A}
=
\begin{bmatrix}
    \mathbf{O} & \mathbf{M} & \mathbf{O} & \mathbf{O} & \cdots & \mathbf{O} \\
    -\mathbf{K} & \mathbf{O} &
    -\mathfrak{C}\delta\xi\mu_1\mathbf{I} & 
    -\mathfrak{C}\delta\xi\mu_2\mathbf{I} & 
    \cdots & -\mathfrak{C}\delta\xi\mu_M\mathbf{I} &   \\
    \mathbf{O} & \mu_1\mathbf{I} 
    & -(\xi_1^2+\eta)\mathbf{I} 
    & \mathbf{O} & \cdots & \mathbf{O}\\
    \mathbf{O} & \mu_2\mathbf{I} &
    \mathbf{O} & -(\xi_2^2+\eta)\mathbf{I} 
    &\ddots & \mathbf{O}
    \\
    \vdots & \ddots & \ddots & \ddots & \ddots &\vdots \\
    \mathbf{O} &\mu_M\mathbf{I} &
    \mathbf{O} & \cdots &\cdots & 
    -(\xi_M^2+\eta)\mathbf{I} 
\end{bmatrix}
$$
where $\mathbf{M}$, $\mathbf{K}$ are defined in \eqref{M}-\eqref{K}, and 
$\mathbf{O}$,  $\mathbf{I}$
are the null and identity matrices, respectively, of $(2IJ)\times(2IJ)$.
Since we want to visualize the complete set of eigenvalues, we will perform a visualization only for $M=1$, $2$, and $3$. On the other hand, by taking values of $I=J=100$, we will obtain matrices large enough to be treated numerically of $N\times N$, with $N=120,000$, $N=160,000$ and $N=200,000$, for each of the 3 respective values of $M$.
\begin{figure*}[hbt]
    \centering
    \begin{subfigure}[b]{0.5\textwidth}
        \centering
        \includegraphics[scale=0.5]{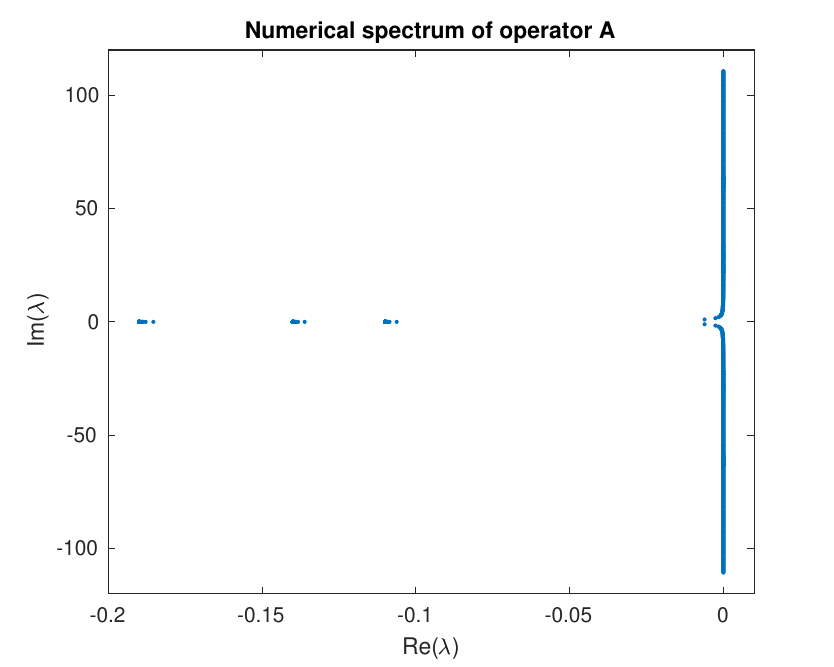}
        \caption{Numerical Spectrum with $M=3$}
    \end{subfigure}%
    \begin{subfigure}[b]{0.5\textwidth}
        \centering
        \includegraphics[scale=0.5]{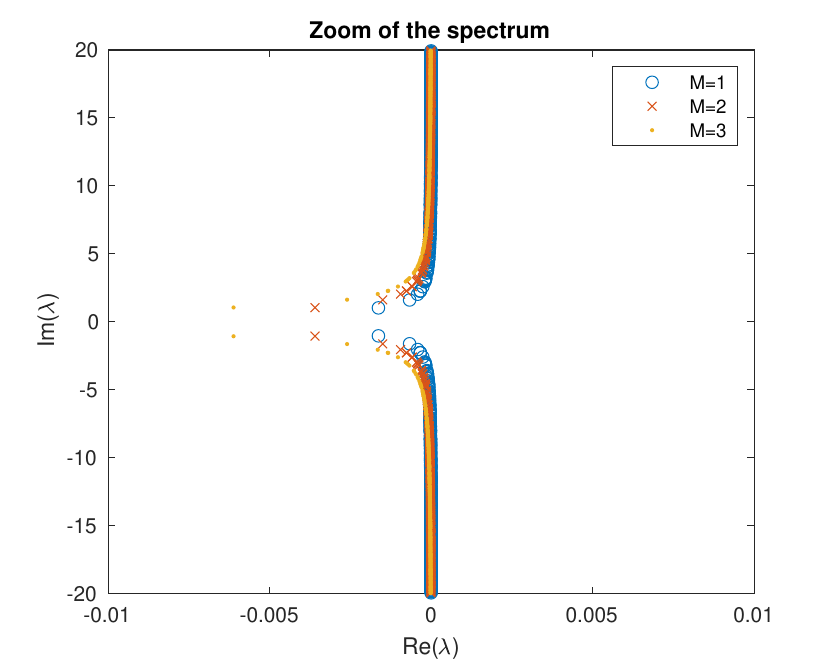}
        \caption{Zoom of spectrum for different value of $M$}
    \end{subfigure}    
    \caption{Numerical spectrum of the operator $A$, using FVM approximation.}
\end{figure*}
\\
\begin{table}[hbt]
    \centering
    \begin{tabular}{c|c}
        $M$ & Eigenvalues of largest module \\
        \hline\hline
       1  & $-1.4303\cdot 10^{-07}  \pm  110.64i$ \\
       \hline
       2 & $-3.2507\cdot 10^{-07}  \pm  110.64i$ \\
       \hline
       3 & $-5.7212\cdot 10^{-07}  \pm  110.64 i$\\
       \hline
       \end{tabular}
    \caption{Eigenvalues of the largest module, and closest to the imaginary axis for different values of $M$.}
    \label{tab:my_label}
\end{table}
In the left panel of Figure 3, the full spectrum of matrix 
$A$ is shown for 
$M=3$. All eigenvalues lie entirely on the left half of the complex plane and are close to the imaginary axis, which confirms the stability of the numerical method, as well as its non-exponential decay. In fact, in this case, the eigenvalues with the largest modulus are the closest to the imaginary axis (see Table 1). It is also observed that, as the mesh is refined, that is, as
$I$ and $J$ increase, eigenvalues with larger modulus (corresponding to higher frequencies) appear and become increasingly closer to the imaginary axis. The right panel of Figure 3 presents a zoom in of the spectrum near the origin, which reveals slight differences for different values of $M$.
\subsection{Example 3: Numerical
Polynomial decay of the Energy}

In this last example, we demonstrate the polynomial decay behavior of the energy, simulating for long times. Furthermore, it is clear from the previous example (spectrum visualization) that solutions that capture high frequencies are important for visualizing polynomial and non-exponential decay, since high frequencies correspond to the eigenvalues closest to the imaginary axis. To do this, we consider a non-regular initial condition that still belongs to the $\mathcal{D}(A)$ domain. In this case, we choose the following initial condition:
\begin{align*}
 &   u_{0}({\pmb{\color{black}{x}}})
=
\begin{cases}
    1-10\sqrt{(x-\frac{1}{2})^2+(y-\frac{1}{2})^2},
    & \text{if }
    \|(x-\frac{1}{2},y-\frac{1}{2})\|<\frac{1}{10}\\
    0 & \text{otherwise,}
\end{cases}
&
  u_{1}({\pmb{\color{black}{x}}})
=0,
\end{align*}
and
\begin{align*}
 &   v_{0}({\pmb{\color{black}{x}}})
=
0,
&
  v_{1}({\pmb{\color{black}{x}}})
=\begin{cases}
    \left(1-100(x-\frac{1}{2})^2-100(y-\frac{1}{2})^2\right)^3,
    & \text{if }
    \|(x-\frac{1}{2},y-\frac{1}{2})\|<\frac{1}{10}\\
    0 & \text{otherwise.}
\end{cases}
\end{align*}
It is straightforward to verify that $u_0\in H^1_0(\Omega)$ and $v_1\in H^2(\Omega)\cap H^1_0(\Omega)$, so the initial data belong to $\mathcal{D}(A)$, while not being excessively regular.
Panels (A) and (B) of Figure 4 display the shapes of the initial conditions, while panels (C) and (D) illustrate the rapid decay of the solutions over long times, highlighting the (in this case, polynomial) decay of the energy.
\begin{figure*}[hbt]
    \centering
    \begin{subfigure}[b]{0.5\textwidth}
        \centering
        \includegraphics[scale=0.5]{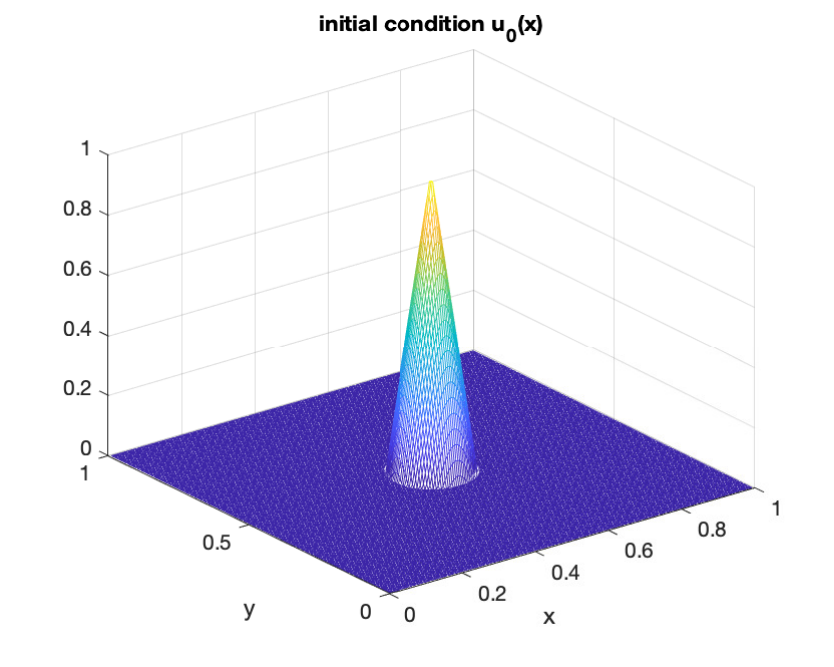}
        \caption{Conical initial condition $u_0(x,y)$}
    \end{subfigure}%
    \begin{subfigure}[b]{0.5\textwidth}
        \centering
        \includegraphics[scale=0.5]{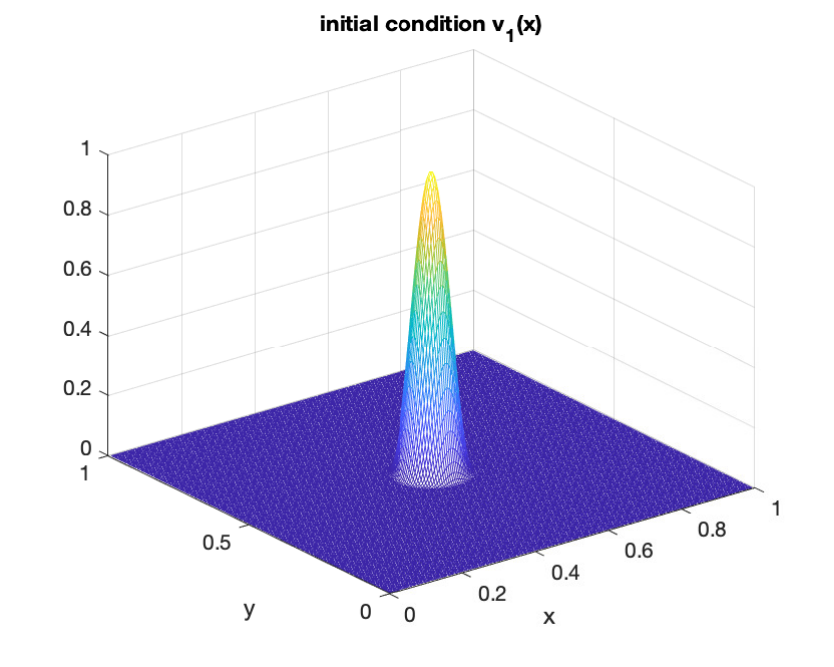}
        \caption{Polynomial of degree 6, initial condition $v_1(x,y)$}
    \end{subfigure}    
    \\
  \centering
    \begin{subfigure}[b]{0.5\textwidth}
        \centering
        \includegraphics[scale=0.5]{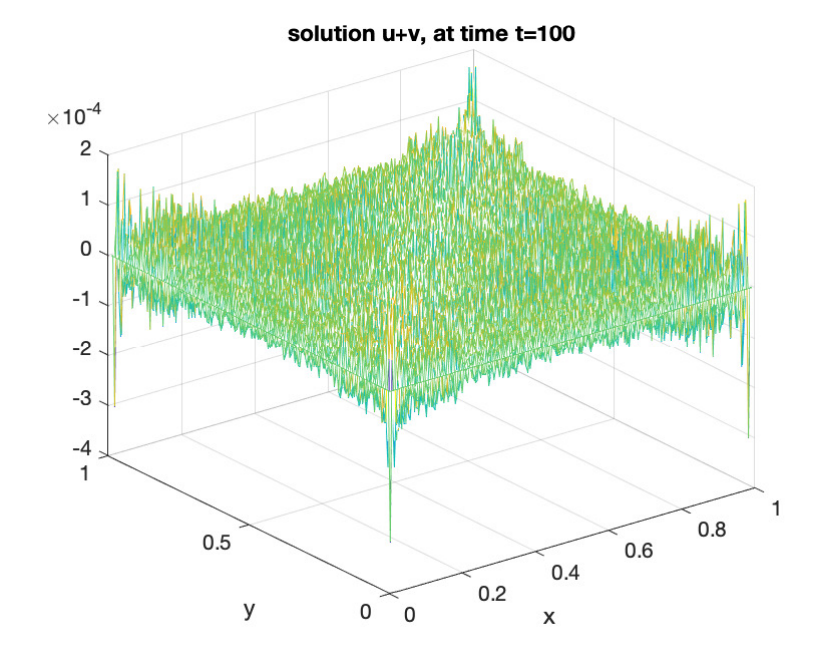}
        \caption{Solution $u+v$ at time $t=100$}
    \end{subfigure}%
    \begin{subfigure}[b]{0.5\textwidth}
        \centering
        \includegraphics[scale=0.5]{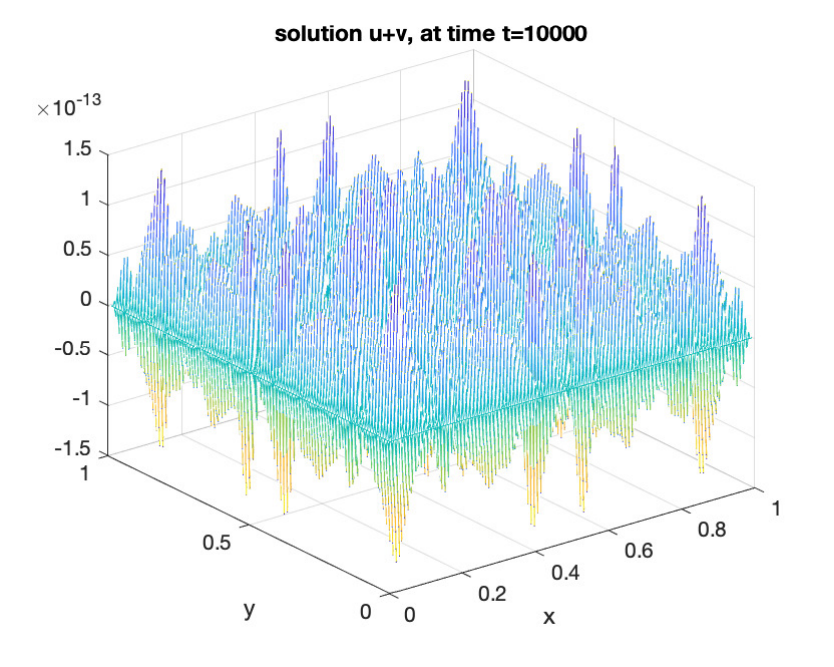}
        \caption{Solution $u+v$ at time $t=10000$}
    \end{subfigure}    
    \caption{Evolution of the solution for long times with not very smooth initial conditions
    in $\mathcal{D}(A)$.}    
\end{figure*}
Finally, Figure 5 presents a log-log plot of the energy decay, which closely resembles a polynomial decay, as indicated by the near-linear behavior typical in such plots. For a fractional derivative with 
$\alpha=0.5$, 
we compare the optimal decay rates
$1/t$ in the case 
$\eta=0$, and 
$1/t^2$  when 
$\eta>0$. 
All curves exhibit similar trends and appear closer to the $1/t$
 behavior, regardless of whether 
$\eta=0$ or not.
\begin{figure*}[hbt]
    \centering
        \includegraphics[scale=0.7]{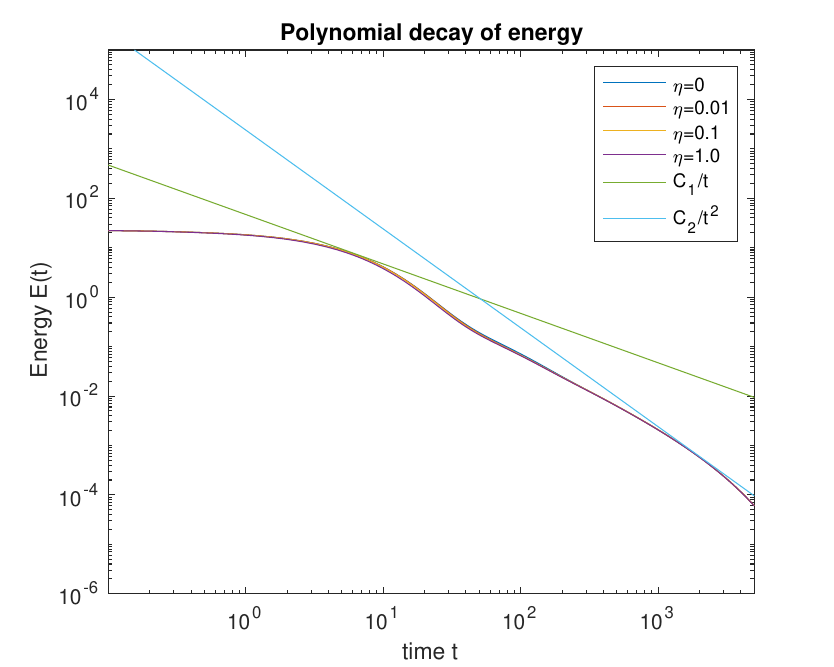}
        \caption{Energy polynomial decay in scale log-log
        and comparison with curves $\frac{C_1}{t}$ and $\frac{C_2}{t^2}$}
 \end{figure*}

\section*{Conflict of interest} 
There is no potential conflict of interest.

\end{document}